\newcommand{\setword}[2]{%
  \phantomsection
  #1\def\@currentlabel{\unexpanded{#1}}\label{#2}%
}
\theoremstyle{plain}
\newtheorem{thm}{Theorem}[section]
\newtheorem{lem}[thm]{Lemma}
\newtheorem{cor}[thm]{Corollary}
\newtheorem{prop}[thm]{Proposition}
\theoremstyle{definition}
\newtheorem{defn}[thm]{Definition}
\newtheorem{rem}[thm]{Remark}
\newtheorem{question}[thm]{Question}
\newtheorem{strategy}[thm]{Strategy}
\newtheorem{notation}[thm]{Notation}
\renewcommand{\Im}{\operatorname{Im}}
\newcommand{\Gam}{i_*\mathcal O_{\gamma\times\gamma}(7(p_0\times\gamma)+\Delta)}
\newcommand{\CC}{\mathbb{C}}
\newcommand{\PP}{\mathbb{P}}
\newcommand{\QQ}{\mathbb{Q}}
\newcommand{\ZZ}{\mathbb{Z}}
\newcommand{\cE}{\mathcal{E}}
\newcommand{\cF}{\mathcal{F}}
\newcommand{\cG}{\mathcal{G}}
\newcommand{\cH}{\mathcal{H}}
\newcommand{\cI}{\mathcal{I}}
\newcommand{\cJ}{\mathcal{J}}
\newcommand{\cL}{\mathcal{L}}
\newcommand{\cO}{\mathcal{O}}
\newcommand{\cT}{\mathcal{T}}
\newcommand{\cX}{\mathcal{X}}
\newcommand{\fM}{\mathfrak{M}}
\DeclareMathOperator{\ch}{ch}
\DeclareMathOperator{\CH}{CH}
\DeclareMathOperator{\Coh}{Coh}
\newcommand{\divv}{\operatorname{div}}
\newcommand{\Ext}{\operatorname{Ext}}
\newcommand{\Gr}{\operatorname{Gr}}
\DeclareMathOperator{\K}{K}
\DeclareMathOperator{\M}{M}
\DeclareMathOperator{\Mon}{Mon}
\newcommand{\NS}{\operatorname{NS}}
\DeclareMathOperator{\OG}{OG}
\newcommand{\Or}{\operatorname{O}}
\newcommand{\Ort}{\operatorname{O}}
\DeclareMathOperator{\Pic}{Pic}
\newcommand{\rk}{\operatorname{rk}}
\newcommand{\Sym}{\operatorname{Sym}}
\DeclareMathOperator{\Supp}{Supp}
\DeclareMathOperator{\td}{td}
\begin{document}
\title{Rational Curves on O'Grady's tenfolds}
\author{Valeria Bertini}
\address{Fakult\"at f\"ur Mathematik - TU-Chemnitz, Deutschland}
\email{valeria.bertini@math.tu-chemnitz.de}

\begin{abstract}
We study the existence of ample uniruled divisors on irreducible holomorphic symplectic manifolds that are deformation of the ten dimensional example introduced by O'Grady in \cite{OG10}. In particular, we show that for any polarized \(\OG10\) manifold lying in four specific connected components of the moduli space of polarized \(\OG10\) manifolds there exists a multiple of the polarization that is the class of a uniruled divisor.  
\end{abstract}
\keywords{Irreducible holomorphic symplectic variety; O'Grady's ten dimensional example; rational curves; ample uniruled divisors.  \\ MSC 2010 classification: 14C99; 14D99; 14J40.}

\maketitle

\tableofcontents

\section{Introduction and notation}\label{section:introduction}

\subsection{Rational curves on IHS manifolds and main result}

A compact complex K\"ahler manifold $X$ is called irreducible holomorphic symplectic (IHS) if it is simply connected and $H^0(X,\Omega^2_X)=\CC\cdot\sigma$, with \(\sigma\) an everywhere non-degenerate two form. IHS manifolds have been introduced by Beauville in \cite{beauville.varietes}. As straightforward consequence of the definition, an ihs manifold has even complex dimension and trivial canonical bundle. In dimension two, IHS manifolds are the $\K3$ surfaces. In higher dimension, examples of IHS manifolds are given by: the Hilbert scheme \(\K3^{[n]}\) of length \(n\) 0-dimensional subschemes on a \(\K3\) surface and the generalized Kummer variety \(\K_n(A)\) on an abelian surface \(A\), both defined for any \(n\ge 2\) and introduced by Beauville in \cite{beauville.varietes}; the six and ten dimensional examples introduced by O'Grady in \cite{OG6} and \cite{OG10} respectively. Up to deformation, no other example of IHS manifold is nowadays know. An IHS manifold will be called of \(\text{K}3^{[n]}\), \(\K_n(A)\), \(\OG6\) or \(\OG10\) type if it is deformation equivalent to \(\K3^{[n]}\), \(\K_n(A)\), the O'Grady's six dimensional example or the O'Grady's ten dimensional example respectively.  

Given an IHS manifold \(X\), the second cohomology group \(H^2(X,\ZZ)\) has a lattice structure given by the Beauville-Bogomolov-Fuijiki quadratic form, that we will denote \(q_X\). The Beauville-Bogomolov-Fujiki form is a deformation invariant of the IHS manifold; it has been computed in the \(\K3^{[n]}\) and \(\K_n(A)\) case by Beauville in \cite{beauville.varietes}, and in the \(\OG6\) and \(\OG10\) case by Rapagnetta in \cite{rapagnettaOG6} and \cite{rapagnettaOG10}.
\\

On a projective \(\K3\) surface, any ample curve has a multiple that is linearly equivalent to the sum of rational curves, thanks to a result by Bogomolov and Mumford (see \cite{mori.mukai.uniruledness}). In \cite{beauville.voisin}, Beauville and Voisin pointed out a consequence of this result on the Chow-0 group of a projective \(\K3\) surface: any point on any rational curve on a projective \(\K3\) surface \(S\) gives the same 0-cycle \(c_S\), and the image of the intersection product \(\Pic(S)\otimes\Pic(S)\rightarrow \CH_0(S)\) is contained in \(\ZZ\cdot c_S\). 

The aim of this paper is to investigate the existence of rational curves on projective IHS manifolds of \(\OG10\) type.  We denote by \(\fM_{\OG10}^{pol}\) the moduli space of polarized IHS manifolds of \(\OG10\) type; an element in \(\fM_{\OG10}^{pol}\) is a pair \((X,c_1(H))\), where \(X\) is an \(\OG10\) type IHS manifold and \(c_1(H)\) is a polarization on it, i.e.\ it is the first Chern class of an ample divisor \(H\) on \(X\). The main result of this work is that for each \((X,c_1(H))\) lying in four specific connected components of \(\fM_{\OG10}^{pol}\), there exists a multiple of \(H\) that is linearly equivalent to the sum of uniruled divisors. More precisely, we can state this result as follows. Connected components of \(\mathfrak M^{pol}_{\OG10}\) are characterized by degree and divisibility of the polarization \(c_1(H)\) in the Beauville-Bogomolov-Fujiki lattice. This is a consequence of the study of the monodromy group \(\Mon^2(X)\) of an IHS manifold \(X\) of \(\OG10\) type, which has been recently computed by Onorati in \cite{onorati.monodromy} and coincides with the group \(\Ort^+(H^2(\OG10,\ZZ))\) of orientation preserving isometries; we refer to subsection \ref{subsection:monodromy.invariants} for more details about that. We denote by \(\mathfrak M_{(d,l)}\) the irreducible component of \(\mathfrak M_{\OG10}^{pol}\) given by degree \(d\) and divisibility \(l\). 

\begin{thm}[see Theorem \ref{thm.uniruled.conn.comp}] For any polarized irreducible holomorphic symplectic manifold $(X,c_1(H))\in\mathfrak M_{(10,1)}\cup\mathfrak M_{(28,1)}\cup\mathfrak M_{(178,2)}\cup \mathfrak M_{(448,2)}\subseteq\mathfrak M_{\OG10}^{pol}$ there exists a positive integer \(m\) such that the linear system \(|mH|\) contains an element that is uniruled. 
\end{thm}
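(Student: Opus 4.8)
The plan is to reduce the theorem, by monodromy and lattice theory, to a single explicit model in each of the four components, and then to spread uniruledness over the whole component by a deformation argument for families of rational curves. By Onorati's computation $\Mon^2(X)=\Ort^+(H^2(\OG10,\ZZ))$ (\cite{onorati.monodromy}) together with the Eichler-type transitivity of $\Ort^+$ on primitive vectors of prescribed BBF-degree and divisibility, two polarized pairs lie in the same connected component of $\mathfrak M^{pol}_{\OG10}$ exactly when their polarizations share the same $(d,l)$. The property ``some multiple of $H$ is represented by a uniruled divisor'' is invariant under the monodromy action and, as I will argue, constant on a connected component; hence it suffices to exhibit one polarized $\OG10$ manifold realizing the conclusion in each of $\mathfrak M_{(10,1)}$, $\mathfrak M_{(28,1)}$, $\mathfrak M_{(178,2)}$ and $\mathfrak M_{(448,2)}$.

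First I would show that if $|mH_0|$ contains a uniruled divisor for one member $(X_0,c_1(H_0))$ of a component, then $|mH_t|$ does for every member $(X_t,c_1(H_t))$. Working over a connected base parametrizing the component, with the polarization class kept of type $(1,1)$, a uniruled divisor $D_0\in|mH_0|$ deforms relatively; the covering family of rational curves deforms in the relative space of stable maps over the locus where the class stays algebraic, which is a dense Noether--Lefschetz-type sublocus, and the union it sweeps is a divisor in $|mH_t|$. A specialization and spreading argument for rational curves then upgrades ``dense'' to ``every'', yielding the constancy used above.

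For the explicit members I would use an $\OG10$ manifold carrying a Lagrangian fibration $\pi\colon X\to\PP^5$ --- either O'Grady's desingularized moduli space of sheaves on a polarized K3 surface $(S,\theta)$ with the Beauville integrable system attached to a linear system on $S$, or the Laza--Sacc\`a--Voisin intermediate Jacobian fibration of a cubic fourfold. The general fibre is an abelian variety, but over the discriminant hypersurface the fibre degenerates to a compactified (intermediate) Jacobian of a singular curve, which is \emph{uniruled}; hence $\pi^{-1}(\text{discriminant})$, and more generally suitable combinations $a\,\Theta+b\,f$ of a theta-type divisor and the fibre class $f=\pi^*\cO_{\PP^5}(1)$, furnish uniruled divisors. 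Alternatively, the rational curves on $S$ granted by Bogomolov--Mumford (\cite{mori.mukai.uniruledness}) propagate to rational curves on $X$ sweeping out a divisor, in the spirit of Beauville--Voisin (\cite{beauville.voisin}).

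It then remains to compute, in the lattice $U^{3}\oplus E_8(-1)^2\oplus A_2(-1)$, the BBF-degree and divisibility of the class of the uniruled divisor produced, and to arrange the K3 (or cubic) data so that these equal exactly $(10,1)$, $(28,1)$, $(178,2)$, $(448,2)$; this is a finite lattice-theoretic verification. The main obstacle is twofold: making the single uniruled member propagate across the \emph{entire} connected component uniformly in $m$ --- the deformation of rational curves only controls a dense subset a priori, so the specialization step must be executed with care --- and producing a uniruled divisor on the model whose class lands on one of the four prescribed pairs, since matching both degree and divisibility simultaneously forces delicate choices of the Mukai vector and of the polarization of the underlying K3 surface.
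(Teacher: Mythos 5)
Your overall architecture --- reduce to one explicit model per component via Onorati's monodromy computation and Eichler's criterion, then spread uniruledness over the component by deforming the ruling rational curves --- is exactly the paper's strategy (inherited from Charles--Mongardi--Pacienza). But the substance of the proof is missing at the step where you must actually exhibit a uniruled divisor whose BBF-square is \emph{positive}. Your primary candidate, the pullback $\pi^{-1}(\mathrm{discriminant})$ under a Lagrangian fibration, is cohomologically a multiple of the fibre class $f=\pi^*\cO_{\PP^5}(1)$, which is isotropic for $q_{10}$; an isotropic effective class can never deform to an ample one, so this divisor lands in no component $\mathfrak M_{(d,l)}$ with $d>0$. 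Your fallback ``$a\Theta+bf$'' is not shown to be uniruled: a relative theta divisor in a Jacobian fibration is not uniruled for a general curve, and the whole point of the paper's Definitions \ref{def.D} and \ref{def.D2} is to build theta-type divisors that \emph{are} uniruled by forcing the line bundles to contain a multiple of a point moving on a fixed rational curve $\rho_0\subset S$ (Lemma \ref{lem.D.uniruled}). Producing that divisor, proving it is Cartier on the singular moduli space, determining whether it contains the singular locus (so that the strict transform is still ruled by \emph{reduced} curves, without which Corollary \ref{cor.defo.curves} does not apply --- cf. Remark \ref{rem.pullback.strict.trans}), and computing its class via test curves and Grothendieck--Riemann--Roch occupies Sections \ref{section:uniruled divisors}--\ref{section:class of the divisors}; none of this is ``a finite lattice-theoretic verification,'' and the four pairs $(d,l)$ are outputs of those computations (with the divisibility-$4$ classes $4$ times primitive ones, whence $(10,1)$ and $(28,1)$ from degrees $160$ and $448$), not targets one arranges in advance.

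A secondary imprecision: your ``dense Noether--Lefschetz sublocus'' followed by a ``specialization upgrading dense to every'' is not how the propagation works and would be delicate to justify as stated. Since one works in the moduli space of \emph{polarized} pairs, the divisor class (hence the dual curve class $\alpha$) is of type $(2n-1,2n-1)$ over the entire base from the outset; the actual mechanism is the dimension bound of Proposition \ref{prop.defo} combined with the rational-equivalence argument of Proposition \ref{prop.rat.equiv}, which forces the component of the relative Kontsevich space to dominate $B$, and properness of $\overline{\mathcal M_0}(\mathcal X/B,\alpha)\to B$ then gives a uniruled divisor on \emph{every} fibre with no density argument needed.
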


As in the case of projective $\K3$ surfaces, the existence of rational curves ruling a divisor has a strong consequence on the Chow-0 group of the manifold. Namely, by the results of Charles, Mongardi and Pacienza in \cite{CPM} we deduce the following.

\begin{cor}\label{cor Chow} Let \(X\) be an \(\OG10\) manifold such that there exists a polarization \(h\) on \(X\) with \((X,h)\in \mathfrak M_{(10,1)}\cup\fM_{(28,1)}\cup\mathfrak M_{(178,2)}\cup \mathfrak M_{(448,2)}\). Then, given an irreducible uniruled divisor $i:D\hookrightarrow X$, the subgroup 
\[
\Im(i_*:\CH_0(D)\rightarrow\CH_0(X))_\QQ=:S_1\CH_0(X)_\QQ
\]
 is independent of the chosen irreducible uniruled divisor. Here we are using the notation \(\CH_0(X)_\QQ:=\CH_0(X)\otimes_\ZZ \QQ\).
\end{cor}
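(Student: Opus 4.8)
The plan is to obtain the statement as a formal consequence of the Theorem together with the zero-cycle machinery of Charles--Mongardi--Pacienza, so that the actual work reduces to checking that the hypotheses of their result are met. First I would feed in the Theorem: since $(X,h)$ admits a polarization lying in one of the four components $\mathfrak M_{(10,1)}\cup\mathfrak M_{(28,1)}\cup\mathfrak M_{(178,2)}\cup\mathfrak M_{(448,2)}$, there is a positive integer $m$ and an element $D_0\in|mh|$ that is uniruled. As $mh$ is ample, $D_0$ is an \emph{ample} uniruled divisor on $X$. This is precisely the input required to run the framework of \cite{CPM} on $X$.

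Second, I would invoke the relevant result of \cite{CPM}: the existence of an ample uniruled divisor on a projective ihs manifold $X$ forces the subgroup $\Im\bigl(i_*:\CH_0(D)\rightarrow\CH_0(X)\bigr)_\QQ$ to be independent of the irreducible uniruled divisor $i:D\hookrightarrow X$. The underlying mechanism mirrors the Beauville--Voisin picture recalled in the introduction: every point of a rational curve determines the same class in $\CH_0(X)_\QQ$ (since $\CH_0(\PP^1)$ is generated by a single point), so $i_*\CH_0(D)_\QQ$ is controlled by the rational equivalence classes of the curves sweeping out $D$; the ample uniruled divisor then supplies enough mobile rational curves covering $X$ to compare any two such divisors, through the theory of constant cycle subvarieties and the associated filtration $S_\bullet\CH_0(X)$. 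One sets $S_1\CH_0(X)_\QQ$ to be this common subgroup.

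The main point to verify, and essentially the only real task, is that the notion of ample uniruled divisor produced by the Theorem matches exactly the hypothesis of the cited statement of \cite{CPM}, and that their conclusion is indeed formulated with $\QQ$-coefficients and for \emph{all} irreducible uniruled divisors, not merely those whose class is a multiple of $h$. Granting this, the corollary follows immediately: the Theorem supplies the ample uniruled divisor, and \cite{CPM} upgrades its mere existence to the independence of the subgroup. I do not expect a genuinely new geometric obstacle at this stage, since all the difficulty has been absorbed either into the Theorem, which constructs the uniruled divisor, or into the general results of \cite{CPM}.
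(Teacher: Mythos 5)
Your proposal matches the paper's own treatment: the corollary is stated in the introduction with exactly this one-line justification, namely that the main theorem supplies an ample uniruled divisor and the results of \cite{CPM} then yield the independence of $S_1\CH_0(X)_\QQ$ from the chosen irreducible uniruled divisor. Your additional remarks on verifying the precise hypotheses and $\QQ$-coefficients in \cite{CPM} are sensible but do not change the route, which is the same as the paper's.
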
 

\begin{rem} Observe that any \(\OG10\) manifold \(X\) admitting a Lagrangian fibration with a relative theta divisor satisfies the hypotheses of Corollary \ref{cor Chow}. Indeed, the Neron-Severi group of such variety contains an hyperbolic lattice: it is generated by the class \(b\) of the Lagrangian fibration, which is isotropic, and the relative theta divisor \(\theta\), which is such that \(q_X(\theta,b)=1\) (see \cite[Lemma 1]{sawon} and \cite[Lemma 3.5]{sacca.birational}).
\end{rem}

It is interesting to remark that the subgroup $S_1\CH_0(X)_\QQ$ introduced in Corollary \ref{cor Chow} is the first block of a conjectural filtration $S_\bullet \CH_0(X)_\QQ$ of the Chow-0 group \(\CH_0(X)_\QQ\) of a projective IHS manifold \(X\), introduced by Voisin in \cite[Definition 0.2]{voisin.remarks}. This conjectural filtration gives, in the case of projective IHS manifolds, a geometric realization of the well known Bloch-Beilinson filtration of $\CH_0(X)_\QQ$; we refer to \cite{voisin.remarks} for the details and the precise definitions.
\\

The existence of ample uniruled divisors on IHS manifolds of Beauville's deformation type has been studied by Charles, Mongardi and Pacienza \cite{CPM} in the $\K3^{[n]}$ case, and by Mongardi and Pacienza \cite{mongardi.pacienza}, \cite{mongardi2019corrigendum} in the \(\K_n(A)\) case. In both cases, the authors have proved that for any polarized IHS manifold out of finitely many connected components of the moduli space of polarized IHS manifolds of the respective Beauville's deformation type, there exists a multiple of the ample divisor that is linearly equivalent to a sum of uniruled divisors. In the $\K3^{[n]}$-case the existence result is optimal as a combination of \cite{CPM} and \cite{oberdieck.shen.yin}. 
\\

The strategy used in this article to prove the existence of ample uniruled divisors on \(\OG10\) manifolds is the following. We find explicit examples of polarized IHS manifolds \((X,h)\in \fM_{\OG10}^{pol}\) such that a multiple of the polarization \(h\) has a representative that is uniruled, and then conclude that the same holds true for any element in the connected component of \(\fM_{\OG10}^{pol}\) containing \((X,h)\), thanks to a result of deformation of rational curves ruling a divisor on an IHS manifold. This strategy was already used in \cite{CPM} and \cite{mongardi.pacienza} for the \(\K3^{[n]}\) and \(\K_n(A)\) case.

\subsection{Plan of the paper} The paper is organized as follows. In Section \ref{section:moduli} we recall the construction of the model of \(\OG10\) manifold we will work on, together with some relevant results that we will use in the following sections. In Section \ref{section:uniruled divisors} we define four uniruled divisors, and we present a strategy to check their positivity with respect to the Beauville-Bogomolov-Fujiki form; the positivity of the divisors will give the existence of ample uniruled divisors on a deformation of the IHS manifold we started with, as observed in Remark \ref{rem.need.positivity}. The strategy presented in Section \ref{section:uniruled divisors} is developed in Section \ref{section:generators Mukai and intersection} and Section \ref{section:class of the divisors}, where we conclude with Theorem \ref{thm.q(D1*)}, Corollary \ref{cor.D1*.ruled.pos} and Theorem \ref{thm.q(tildeD2)} that the four divisors introduced are indeed positive. In Section \ref{section:ample.uniruled.divisors} we finally conclude with Theorem \ref{thm.uniruled.conn.comp} the existence of ample uniruled divisors for four connected components of \(\fM_{\OG10}^{pol}\), as presented in the introduction; this is done thanks to a result of deformation of rational curves ruling a divisors on an IHS manifold, see Corollary \ref{cor.defo.curves}. We conclude the article presenting some possible natural development of our work.

\subsection{Notation} We will always work over the field of complex numbers $\CC$. Given an IHS manifold \(X\) of \(\OG10\) type, we will denote by \(q_{10}\) the associated Beauville-Bogomolov-Fujiki form. For an IHS manifold \(X\), the first Chern class homomorphism gives an inclusion of the Picard group \(\Pic(X)\) in the second cohomology group \(H^2(X,\ZZ)\); in other words, the Picard group is isomorphic to the Neron-Severi group \(\NS(X)\). For this reason, given a divisor \(D\in\Pic(X)\), by abuse of notation  we will still denote by \(D\) its class in \(\NS(X)\). Also, for any divisor \(D\) on \(X\) we will write \(q_{10}(D)\) for \(q_{10}(c_1(D))\).

\subsection*{Acknowledgements}
Most of the work contained in this article was carried out during my PhD at the Università degli Studi di Roma Tor Vergata and at the Université de Strasbourg, under the supervision of Gianluca Pacienza and Antonio Rapagnetta. It is my pleasure to thank them for their invaluable help and their support during all the stages of the work. The author strongly benefitted from the help and many discussions with Christian Lehn and Claudio Onorati. Furthermore, I wish to thank Samuel Boissièr and Arvid Perego for having carefully read my PhD thesis, and for their comments and suggestions. In particular, I thank Arvid Perego for having pointed out a blunder in the computations in the proof of Proposition \ref{prop.inters.D2}. Finally, I want to thank the anonymous referee for their comments and questions, which have improved the work. 
This work has been partially funded by the DFG through the research grant Le 3093/2-2, and by Portuguese national funds through FCT — Fundação para a Ciência e a Tecnologia — under the project EXPL/MAT-PUR/1162/2021 and through CMUP (Centro de Matematica da Universidade do Porto) under the
project UIDB/00144/2020.


\section{Recall on the ten dimensional O'Grady's example}
\label{section:moduli}

\subsection{Moduli spaces of sheaves}\label{subsection:moduli.sheaves} In this paper we will construct uniruled divisors on IHS manifolds of $\OG10$-type that are desingularization of moduli spaces of semistable sheaves on a projective $\K3$ surface. We summarize here the construction of such manifolds and we collect some results that we are going to use in what follows. 

Let $S$ be a projective $\K3$ surface. Given a coherent sheaf $E\in\Coh(S)$, its Mukai vector \(\mathfrak v(E)\) is defined as:
\[
\mathfrak v(E):=\ch(E)\sqrt{\td(S)}=(\rk(E),c_1(E),\ch_2(E)+\rk(E))\in H^*(S,\ZZ).
\]
Notice that the vector \(\mathfrak v(E)\) is written as a three-entries vector because \(H^*(S,\ZZ)\cong H^0(S,\ZZ)\oplus H^2(S,\ZZ)\oplus H^4(S,\ZZ)\), since \(S\) is a \(\K3\) surface. 
Given any element $v=(r,l,s)\in H^*(S,\ZZ)$, we say that $v$ is a Mukai vector if $r\ge0$ and $l\in \NS(S)$, and if $r=0$ then either $l$ is the first Chern class of an effective divisor, or $l=0$ and $s>0$. Given $v=(r,l,s)$ and $v'=(r',l',s')$, the Mukai pairing of $v$ and $v'$ is the product 
\[
<v,v'>=-\int_S v\wedge v'^\vee
\]
where $v'^\vee:=(r',-l',s')$. The lattice $(H^*(S,\ZZ),<,>)$ is known as the Mukai lattice of $S$; it can be equipped with the following pure Hodge structure of weight two:
\begin{align*}
(H^*(S))^{2,0}&:=H^{2,0}(S) \\
(H^*(S))^{1,1}&:=H^0(S,\CC)\oplus H^{1,1}(S)\oplus H^4(S,\CC) \\
(H^*(S))^{0,2}&:=H^{0,2}(S). 
\end{align*}

Given a Mukai vector $v$ with \(v^2\ge2\) and $H$ a $v$-generic polarization on $S$, we denote by  $\M_v(S,H)$ the moduli space of $S$-equivalence classes of $H$-semistable sheaves on $S$ with Mukai vector \(v\). We refer to \cite{PR:OG10} for the definition of $v$-generic polarization; we will work with \(\K3\) surfaces with Picard group of rank one, where any polarization is \(v\)-generic. We denote by $\M_v^s(S,H)$ the moduli space of $H$-stable sheaves over $S$ with Mukai vector $v$, which is an open subset of \(\M_v(S,H)\). 

When the Mukai vector is primitive in the Mukai lattice, the moduli space \(\M_v(S,H)\), if not empty, is known thanks to the work many authors (see \cite{mukai.symplectic,mukai1987moduli,ogrady.weight,yoshioka1998note,yoshioka2001moduli}) to be an IHS manifold of dimension \(2n=v^2+2\), deformation equivalente to the Hilbert scheme \(S^{[n]}\). In this case \(\M^s_v(S,H)=\M_v(S,H)\).

The case we deal with here is when the Mukai vector is non primitive, since it gives rise to the ten dimensional example of IHS manifold we are interested in. The first result in this direction has been found by O'Grady.

\begin{thm}[O'Grady \cite{OG10}] Let $S$ e a projective $\K3$ surface and fix the Mukai vector $v=(2,0,-2)\in H^*(S,\ZZ)$; let $H$ be a $v$-generic polarization on $S$. The moduli space $\M_{10}:=\M_v(S,H)$ admits a symplectic desingularization $\tilde \pi:\widetilde\M_{10}\rightarrow \M_{10}$, which is an IHS manifold of dimension ten and second Betti number at least $24$. 
\end{thm}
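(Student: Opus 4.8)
The plan is to build $\M_{10}=\M_v(S,H)$ as a projective GIT quotient, to analyze its singularities through local deformation theory, to resolve them by an explicit birational modification, and finally to check that the resolution meets the defining properties of an ihs manifold. First I would realize $M:=\M_v(S,H)$ via the Gieseker--Maruyama construction as the projective moduli space of $S$-equivalence classes of $H$-semistable sheaves with $\mathfrak v=v=(2,0,-2)$. Because $v=2v_0$ with $v_0=(1,0,-1)$ primitive and $v_0^2=2$, the vector is non-primitive, so strictly semistable sheaves occur and $M$ is singular. By Mukai's theorem the stable locus $M^s$ is smooth of dimension $v^2+2=10$ and carries a canonical holomorphic symplectic form $\sigma$, obtained from the Yoneda product $\Ext^1(E,E)\times\Ext^1(E,E)\to\Ext^2(E,E)\cong\CC$ together with Serre duality. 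The whole problem is then to extend $\sigma$ to a symplectic resolution of $M$.

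Next I would stratify the singular locus. Every strictly semistable sheaf is $S$-equivalent to a polystable $F_1\oplus F_2$ with $\mathfrak v(F_i)=v_0$; write $\Sigma\subset M$ for this locus and $\Sigma_0\subset\Sigma$ for the deeper stratum where $F_1\cong F_2$. One identifies $\Sigma_0\cong\M_{v_0}(S,H)$, which is of $\K3^{[2]}$ type and has dimension $4$, while $\dim\Sigma=8$. The heart of the matter is the local analysis at a polystable sheaf $E$: by the Kuranishi/Luna-slice description, $M$ is analytically locally isomorphic near $[E]$ to $\kappa^{-1}(0)/\!\!/\Aut(E)$, where $\kappa\colon\Ext^1(E,E)\to\Ext^2(E,E)$ is the quadratic Kuranishi map and $\Ext^1(E,E)$ is symplectic. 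I would compute these models explicitly. Transversally to $\Sigma\setminus\Sigma_0$ (where $\Aut(E)/\CC^*=\CC^*$ acts with weights $\pm1$ on $\Ext^1(F_1,F_2)\oplus\Ext^1(F_2,F_1)\cong\CC^2\oplus\CC^2$) the two-dimensional transverse singularity is an ordinary node $A_1$. Transversally to $\Sigma_0$ (where $\Aut(E)/\CC^*\cong\mathrm{PGL}_2$) one gets a specific six-dimensional symplectic cone, namely the symplectic reduction $\mu^{-1}(0)/\!\!/\mathrm{SL}_2$ of $\mathfrak{sl}_2\otimes\Ext^1(F,F)$ by the adjoint action. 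This is the step I expect to be the main obstacle: the existence of a symplectic resolution, and the fact that it is peculiar to the case $(m,v_0^2)=(2,2)$, is dictated entirely by the fine geometry of this transverse cone over $\Sigma_0$ (for higher $v_0^2$ or higher multiplicity the analogous cone admits no crepant resolution).

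Finally I would resolve $M$ by an explicit sequence of blow-ups along $\Sigma_0$ and along the strict transform of $\Sigma$, followed by the contraction of the redundant exceptional divisor, mirroring O'Grady's construction, to obtain a smooth projective $\tilde\pi\colon\widetilde\M_{10}\to M$. Since $M$ has symplectic singularities, any resolution pulls $\sigma$ back to a holomorphic two-form, and the resolution is symplectic precisely when it is crepant; I would verify crepancy by computing the discrepancies along the exceptional divisors over $\Sigma$ and $\Sigma_0$ and checking that they vanish, so that $\sigma$ extends to an everywhere non-degenerate two-form on $\widetilde\M_{10}$ with $h^{2,0}=1$. To conclude that $\widetilde\M_{10}$ is ihs I would establish $\pi_1(\widetilde\M_{10})=1$, relating it to the fundamental group of $M^s$ and showing that the exceptional locus contributes no new loops, together with $H^0(\widetilde\M_{10},\Omega^2_{\widetilde\M_{10}})=\CC\cdot\sigma$. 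The lower bound $b_2\ge24$ would then follow by tracking the second Betti number through each blow-up and contraction via the standard cohomology formulas, adding the contribution of $H^2(M^s)$ to those of the exceptional divisors lying over $\Sigma$ and $\Sigma_0$.
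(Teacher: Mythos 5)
This theorem is not proved in the paper: it is recalled verbatim from O'Grady's article \cite{OG10}, so there is no internal proof to compare your proposal against. Judged against O'Grady's original argument, your outline is essentially faithful and the checkable details are right: $\M^s_{10}$ is smooth symplectic of dimension $v^2+2=10$ by Mukai, the strictly semistable locus is $\Sigma\cong\Sym^2(S^{[2]})$ of dimension $8$ with deeper stratum $\Sigma_0\cong\M_{v_0}(S,H)\cong S^{[2]}$ of dimension $4$, the transverse singularity along $\Sigma\smallsetminus\Sigma_0$ is indeed an $A_1$ point (the invariants $a_ib_j$ of $\CC^2\oplus\CC^2$ cut by the rank and trace conditions give $\{x^2+yz=0\}$), and the identification of the transverse cone over $\Sigma_0$ with the symplectic reduction of $\mathfrak{sl}_2\otimes\Ext^1(F,F)$ is the Lehn--Sorger form of the local model \cite{lehnsorger} rather than O'Grady's own, more hands-on, analysis. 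The resolution via blow-ups of $\Sigma_0$ and of the strict transform of $\Sigma$ followed by a contraction, with crepancy verified by a discrepancy computation, is O'Grady's actual route. That said, what you have written is a plan rather than a proof: every step that carries real content --- the existence and crepancy of the resolution of the six-dimensional transverse cone (precisely the point that fails for $m\ge3$ or $w^2\ge4$, cf.\ Theorem \ref{thm.desing.Mv}.(3)), the extension and non-degeneracy of the two-form, $\pi_1(\widetilde\M_{10})=1$, $h^{2,0}=1$, and the exhibition of $24$ independent classes in $H^2$ (the rank-$23$ image of $v^\perp$ under the Donaldson--Mukai map plus the exceptional divisor) --- is announced rather than executed, and these occupy the bulk of \cite{OG10}.
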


By $b_2(\widetilde\M_{10})\ge 24$ one can conclude that $\widetilde\M_{10}$ gives a new deformation class of IHS manifolds: the second Betti number is a deformation invariant, and the Betti numbers of Beauville's examples are 7 and 23, as shown in \cite{beauville.varietes}. In \cite{rapagnettaOG10} Rapagnetta proved that $b_2(\widetilde\M_{10})=24$. 

The manifold $\widetilde\M_{10}$ is actually the only (up to deformation) new example of IHS manifold that can be constructed as a symplectic resolution of a moduli space of semistable sheaves on a projective $\K3$ surface. Also, it arises for several choices of non primitive Mukai vectors.

\begin{thm}\label{thm.desing.Mv} Let $S$ be a projective $\K3$ surface and $v$ a Mukai vector of the form $v=mw$, with $w$ primitive, $m\ge 2$ and $w^2>0$; let $H$ be a $v$-generic polarization on $S$.
\begin{enumerate}
\item If $m=2$ and $w^2=2$, there exists a symplectic desingularization $\tilde\pi_v:\widetilde\M_v(S,H)\rightarrow\M_v(S,H)$ obtained as blow-up of $\M_v(S,H)$ along the singular locus $\Sigma_v:=\M_v\setminus\M_v^s$, taken with reduced structure (Lehn, Sorger \cite{lehnsorger}). 
\item Under the hypotheses of (1), $\widetilde\M_v(S,H)$ is deformation equivalent to $\widetilde\M_{10}$. The pullback morphism 
\[
\tilde\pi_v^*:H^2(\M_v(S,H),\ZZ)\rightarrow H^2(\widetilde\M_v(S,H),\ZZ)
\] 
is injective; as consequence, $H^2(\M_v(S,H),\ZZ)$ carries a pure weight-two Hodge structure and a lattice structure, given by the restriction of the pure weight-two Hodge structure of $H^2(\widetilde\M_v(S,H),\ZZ)$ and of its Beauville-Bogomolov-Fujiki form. Furthermore, there exists a Hodge isometry 
\[
\lambda_v:v^\perp\rightarrow H^2(\M_v(S,H),\ZZ).
\] 
(Perego, Rapagnetta \cite{PR:OG10}).
\item If $m\ge 3$ or $w^2\ge 4$, then the moduli space $\M_v(S,H)$ does not admit any symplectic resolution (Kaledin, Lehn, Sorger \cite{kaledin.lehn.sorger}).
\end{enumerate}
\end{thm}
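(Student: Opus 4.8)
The three parts are logically independent and, as the attributions show, rest on rather different techniques; the common thread I would exploit is the local analytic description of the singularities of $\M_v(S,H)$ along the strictly semistable locus $\Sigma_v$. The plan is to reduce each statement to an understanding of a formal neighbourhood of a point $[E]$ representing a polystable, non-stable sheaf. By Luna's étale slice theorem together with Kuranishi theory, such a neighbourhood is modelled on the symplectic quotient of the quadratic cone $\{\kappa=0\}\subset\Ext^1(E,E)$, cut out by the Yoneda obstruction map $\kappa\colon\Ext^1(E,E)\to\Ext^2(E,E)$, by the reductive stabiliser $\Aut(E)$; and the dimensions $\dim\Ext^1(E_i,E_j)$ are governed by $w^2=\langle w,w\rangle$.

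For part (1), with $m=2$ and $w^2=2$, I would make this model explicit following Lehn--Sorger: a polystable point is $E_1\oplus E_2$ with $\mathfrak v(E_i)=w$, the deepest stratum $\Omega_v\subset\Sigma_v$ being the locus of split sheaves $E_1^{\oplus2}$, where $\Aut(E)$ contains $\mathrm{SL}_2$, while along $\Sigma_v\setminus\Omega_v$ (where $E_1\not\cong E_2$) the stabiliser is only a torus. Computing the blow-up of the reduced $\Sigma_v$ directly on this local model, the two points to verify are that the blow-up is smooth and that the holomorphic two-form on $\M_v^s$ extends to a nowhere-degenerate form across the exceptional locus, so that the resolution is symplectic; both are quadratic computations in the slice. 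O'Grady's theorem, which I may assume, already furnishes a symplectic resolution for $v=(2,0,-2)$, so the real content of (1) is the identification of that resolution with a single blow-up along the reduced singular locus, uniformly for all $m=2$, $w^2=2$.

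For part (2), I would obtain deformation equivalence to $\widetilde\M_{10}$ by a family argument: as $(S,H)$ and the primitive vector $w$ (with $w^2=2$) vary, the resolutions $\widetilde\M_v(S,H)$ assemble into a smooth proper family over a connected base that contains the standard pair realising $v=(2,0,-2)$, whence all fibres are deformation equivalent. For the injectivity of $\tilde\pi_v^*$ I would use that symplectic singularities are rational, so by the decomposition theorem $\QQ$ is a direct summand of $R\tilde\pi_{v*}\QQ$; this splits $\tilde\pi_v^*$ on rational cohomology, and since $H^2(\M_v(S,H),\ZZ)$ is torsion-free, integral injectivity follows. The injection then transports the Hodge and lattice structures from $H^2(\widetilde\M_v(S,H),\ZZ)$. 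Finally I would build $\lambda_v$ à la Mukai--O'Grady from a quasi-universal sheaf $\cE$ on $S\times\M_v(S,H)$, sending $\alpha\in v^\perp$ to the degree-two part of $\tilde\pi_v^*p_{\M,*}\!\left(\ch(\cE)\sqrt{\td(S)}\cdot p_S^*\alpha^\vee\right)$; compatibility with the weight-two Hodge structure of the Mukai lattice is immediate from its definition, and the isometry property amounts to matching the Mukai pairing with Rapagnetta's Beauville--Bogomolov--Fujiki form on $H^2(\widetilde\M_v(S,H))$.

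For part (3), with $m\ge3$ or $w^2\ge4$, I would return to the local model at a generic point of the deepest stratum of $\Sigma_v$ and, following Kaledin--Lehn--Sorger, analyse the resulting symplectic quotient singularity, the aim being to show it is locally $\QQ$-factorial and terminal; a symplectic, equivalently crepant, resolution must be an isomorphism over such a point, contradicting the singularity of $\M_v(S,H)$ there. \textbf{The main obstacle} throughout is precisely this local analysis: pinning down the quadratic normal form in each regime, and in (1) verifying smoothness of the blow-up together with non-degeneracy of the extended two-form, and in (3) establishing the $\QQ$-factoriality and terminality that obstruct every crepant resolution. Once these local pictures, and Rapagnetta's computation of the Beauville--Bogomolov--Fujiki form, are in hand, the deformation-invariance and the isometry statement of (2) are comparatively formal.
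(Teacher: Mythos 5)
This statement is quoted in the paper as background (Section~2) with attributions to Lehn--Sorger, Perego--Rapagnetta and Kaledin--Lehn--Sorger; the paper itself gives no proof, so there is nothing internal to compare your argument against. Measured against the cited sources, your sketch is a faithful reconstruction of their strategies: the Luna-slice/Kuranishi local model at a polystable point, the Lehn--Sorger identification of O'Grady's resolution with the single blow-up of the reduced singular locus in the case $m=2$, $w^2=2$, the Perego--Rapagnetta family argument for deformation equivalence together with the Donaldson--Le Potier--Mukai morphism for $\lambda_v$, and the Kaledin--Lehn--Sorger analysis showing the transverse singularity at a generic point of $\Sigma_v$ is $\QQ$-factorial and terminal when $m\ge3$ or $w^2\ge4$, so that any crepant resolution would be an isomorphism there. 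Two small caveats. First, in part (2) the assertion that the pairs $(S,H,w)$ with $w$ primitive of square $2$ fit into a \emph{connected} family containing $(2,0,-2)$ is not automatic; Perego--Rapagnetta reduce to standard Mukai vectors via Fourier--Mukai equivalences and deformations of the underlying $\K3$, and the connectivity is a point that has to be established, not assumed. Second, your deduction of integral injectivity of $\tilde\pi_v^*$ from the rational splitting requires torsion-freeness of $H^2(\M_v(S,H),\ZZ)$, which is itself part of what Perego--Rapagnetta prove rather than an input. You correctly flag the local quadratic normal forms as the real work in parts (1) and (3); as a proof outline this is sound, but the substance lies in the references, exactly as the paper treats it.
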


The following Corollary is \cite[Corollary 2.7]{PR:factoriality}:

\begin{cor}[Perego, Rapagnetta] \label{cor.isom.hodge} Assuming the hypotheses of Theorem \ref{thm.desing.Mv}.(2), the isometry $\lambda_v$ restricts to an isometry 
\[\lambda_v:(v^\perp)^{1,1}:=v^\perp\cap (H^*(S))^{1,1}\xrightarrow{\sim} \Pic(\M_v(S,H)).
\]
\end{cor}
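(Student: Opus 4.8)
\emph{Plan.} Throughout, abbreviate $\M_v:=\M_v(S,H)$ and $\widetilde\M_v:=\widetilde\M_v(S,H)$. By Theorem \ref{thm.desing.Mv}.(2) the map $\lambda_v\colon v^\perp\to H^2(\M_v,\ZZ)$ is already a Hodge isometry, so the only content of the Corollary is the identification of its image on integral $(1,1)$-classes with the Picard group. I would split the argument into two steps: a formal reduction using that $\lambda_v$ respects the Hodge decomposition, and a Lefschetz $(1,1)$-type description of $\Pic(\M_v)$ obtained by comparing $\M_v$ with its symplectic resolution.

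\emph{Step 1 (reduction).} A Hodge isometry is in particular a morphism of pure weight-two Hodge structures, so after complexification it carries each graded piece $(v^\perp)^{p,q}$ isomorphically onto the corresponding piece of $H^2(\M_v,\ZZ)\otimes\CC$. Restricting to integral classes of type $(1,1)$, I would conclude that $\lambda_v$ induces an isomorphism
\[
\lambda_v\colon (v^\perp)^{1,1}\xrightarrow{\ \sim\ } H^2(\M_v,\ZZ)^{1,1},
\]
where the right-hand side denotes the integral $(1,1)$-classes for the weight-two Hodge structure on $H^2(\M_v,\ZZ)$ furnished by Theorem \ref{thm.desing.Mv}.(2). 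It then remains to prove the identity $H^2(\M_v,\ZZ)^{1,1}=\Pic(\M_v)$, i.e.\ a Lefschetz $(1,1)$-theorem for the (singular) variety $\M_v$.

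\emph{Step 2 (Lefschetz for $\M_v$).} I would run the exponential sequence $0\to\ZZ\to\cO\to\cO^*\to 0$ on $\M_v$ and compare it, via pullback along $\tilde\pi_v$, with the corresponding sequence on $\widetilde\M_v$. Since $\M_v$ is a normal projective variety with symplectic, hence rational, singularities, one has $\tilde\pi_{v*}\cO_{\widetilde\M_v}=\cO_{\M_v}$ and $R^i\tilde\pi_{v*}\cO_{\widetilde\M_v}=0$ for $i>0$; the Leray spectral sequence then gives isomorphisms $H^i(\M_v,\cO)\cong H^i(\widetilde\M_v,\cO)$ for all $i$. In particular $H^1(\M_v,\cO)=0$, because $\widetilde\M_v$ is ihs, and the square
\[
\begin{array}{ccc}
H^2(\M_v,\ZZ) & \longrightarrow & H^2(\M_v,\cO_{\M_v})\\
\downarrow & & \downarrow\\
H^2(\widetilde\M_v,\ZZ) & \longrightarrow & H^2(\widetilde\M_v,\cO_{\widetilde\M_v})
\end{array}
\]
commutes, the horizontal arrows being induced by $\ZZ\hookrightarrow\cO$, the left arrow being the injective map $\tilde\pi_v^*$, and the right arrow being an isomorphism. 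The bottom map is the projection onto $H^{0,2}(\widetilde\M_v)$, whose kernel on integral classes is exactly $H^2(\widetilde\M_v,\ZZ)^{1,1}$; pulling this back through the Hodge-type-preserving injection $\tilde\pi_v^*$ identifies the kernel of the top map with $H^2(\M_v,\ZZ)^{1,1}$. Exactness of the exponential sequence at $H^2(\M_v,\ZZ)$ then gives $\mathrm{im}(c_1)=H^2(\M_v,\ZZ)^{1,1}$, and $c_1$ is injective because $H^1(\M_v,\cO)=0$; hence $\Pic(\M_v)\cong H^2(\M_v,\ZZ)^{1,1}$, which combined with Step 1 proves the Corollary.

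\emph{Main obstacle.} The delicate point is Step 2: one must control the Hodge theory of the singular space $\M_v$ and be certain that purity and the $(1,1)$-description are faithfully transported through the resolution. The crucial inputs that make this go through are the rational (indeed symplectic) nature of the singularities, which yields $R\tilde\pi_{v*}\cO_{\widetilde\M_v}=\cO_{\M_v}$ and hence $H^\bullet(\M_v,\cO)\cong H^\bullet(\widetilde\M_v,\cO)$, together with the injectivity of $\tilde\pi_v^*$ on $H^2$ recorded in Theorem \ref{thm.desing.Mv}.(2). An alternative route, avoiding the explicit descent of line bundles from $\widetilde\M_v$ to $\M_v$, would be to invoke directly the factoriality of $\M_v$ proved in \cite{PR:factoriality}, which relates Cartier divisors upstairs and downstairs and thereby pins down $\Pic(\M_v)$ inside $H^2$.
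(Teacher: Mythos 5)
The paper does not actually prove this statement: it is quoted verbatim as \cite[Corollary 2.7]{PR:factoriality}, so there is no internal argument to compare yours against. Judged on its own, your proof is correct and self-contained. Step 1 is immediate from the fact that a Hodge isometry is an isomorphism of $\ZZ$-modules preserving the bigraded pieces, and Step 2 is a valid Lefschetz $(1,1)$-type argument for the singular space $\M_v$: the exponential sequence on the projective variety $\M_v$, the vanishing $H^1(\M_v,\cO)=0$ and the isomorphism $H^2(\M_v,\cO)\cong H^2(\widetilde\M_v,\cO)$ coming from $R\tilde\pi_{v*}\cO_{\widetilde\M_v}=\cO_{\M_v}$, and the commutative square correctly identify $\ker\bigl(H^2(\M_v,\ZZ)\to H^2(\M_v,\cO)\bigr)$ with the integral $(1,1)$-part of the Hodge structure that Theorem \ref{thm.desing.Mv}.(2) places on $H^2(\M_v,\ZZ)$ by restriction along $\tilde\pi_v^*$ (this is exactly the right notion here, since that is how the paper defines the Hodge structure downstairs). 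Two inputs should be made explicit rather than asserted: that $\M_v$ has rational singularities (it has symplectic singularities in Beauville's sense by Kaledin--Lehn--Sorger, which are rational Gorenstein; alternatively, the Lehn--Sorger resolution is crepant and $\M_v$ is normal), and the GAGA identification of the analytic $H^1(\M_v,\cO^*)$ with the algebraic Picard group of the projective variety $\M_v$. Neither is a gap, only a missing citation. Your closing alternative via factoriality is less direct, since factoriality concerns Weil versus Cartier divisors rather than the image of $c_1$ inside $H^2$, so the exponential-sequence route is the one to keep.
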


\cite{PR:factoriality} gives a lattice-theoretic description of the second cohomology group of the symplectic resolution \(\widetilde\M_v(S,H)\) of the moduli space \(\M_v(S,H)\), that we recall here. Given a Mukai vector \(v\) as in the hypotheses of Theorem \ref{thm.desing.Mv}.(2), we consider the lattice \((v^\perp\otimes_\ZZ\QQ)\oplus^\perp \QQ\cdot\sigma\), where \(v^\perp\otimes_\ZZ\QQ\) is the lattice endowed with the \(\QQ\)-linear extension of the Mukai pairing, \(\sigma\) is a class with square \(-6\) and the direct sum is defined to be orthogonal. Define \(\Gamma_v\) as the following subgroup (hence lattice) of \((v^\perp\otimes_\ZZ\QQ)\oplus^\perp \QQ\cdot\sigma\):
\begin{equation}\label{eq.Gamma_v}
\Gamma_v:=\Bigl\{\Bigl(\frac{\beta}{2},k\frac{\sigma}{2}\Bigl)\ \Bigl|\ \beta\in (v^\perp)^{1,1},\ k\in\ZZ,\ <\beta,v^\perp>\subseteq 2\ZZ,\ k\in2\ZZ\Leftrightarrow\frac{\beta}{2}\in v^\perp\Bigl\}.
\end{equation}
The lattice \(\Gamma_v\) can be equipped of the following pure Hodge structure of weight two:
\begin{align*}
\Gamma_v^{2,0}&:=(v^\perp)^{2,0} \\
\Gamma_v^{1,1}&:=(v^\perp)^{1,1}\oplus \CC\cdot\sigma \\
\Gamma_v^{0,2}&:=(v^\perp)^{0,2}.
\end{align*}

The following is \cite[Theorem 3.4]{PR:factoriality}.
\begin{thm}[Perego, Rapagnetta]\label{thm.Gamma_v} Assume the hypothesis of Theorem \ref{thm.desing.Mv}. Following the notation introduced above, there exists a Hodge isometry:
\begin{align*}
f_v:\Gamma_v&\xrightarrow{\sim} H^2(\widetilde\M_v(S,H),\ZZ) \\
\Bigl(\frac{\beta}{2},k\frac{\sigma}{2}\Bigl)&\mapsto (\widetilde\pi\circ\lambda_v)\Bigl(\frac{\beta}{2}\Bigl)+\frac{k}{2}\widetilde\Sigma_v.
\end{align*} 
Here, by abuse of notation, we are denoting by \(\widetilde\pi_v\circ\lambda_v\) the \(\QQ\)-linear extension of it.
\end{thm}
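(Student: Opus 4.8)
The plan is to check that $f_v$ is well defined, that it is an isometry for the Beauville--Bogomolov--Fujiki form $q_{10}$, and that it is bijective; the compatibility with the Hodge structures is essentially built into the construction. Indeed, by Theorem \ref{thm.desing.Mv}.(2) the map $\lambda_v\colon v^\perp\to H^2(\M_v(S,H),\ZZ)$ is a Hodge isometry, the pullback $\widetilde\pi_v^*$ is a morphism of weight-two Hodge structures, the exceptional class $\widetilde\Sigma_v$ is of type $(1,1)$, and $\sigma$ has been declared to lie in $\Gamma_v^{1,1}$. Hence $f_v$ sends $\Gamma_v^{2,0}$ to $H^{2,0}(\widetilde\M_v(S,H))$, $\Gamma_v^{0,2}$ to $H^{0,2}(\widetilde\M_v(S,H))$, and $\Gamma_v^{1,1}$ into $H^{1,1}(\widetilde\M_v(S,H))$, so it respects the Hodge decomposition as soon as it is shown to be a morphism of lattices.

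For the isometry statement I would separate the three types of pairings occurring on the image. By the very definition of the lattice structure on $H^2(\M_v(S,H),\ZZ)$ recalled in Theorem \ref{thm.desing.Mv}.(2), the pullback $\widetilde\pi_v^*$ is an isometric embedding; composing with the Hodge isometry $\lambda_v$ shows that $\widetilde\pi_v^*\circ\lambda_v$ transports the Mukai pairing on $v^\perp$ to $q_{10}$ on its image. Next, the normalization $\sigma^2=-6$ reflects the known value $q_{10}(\widetilde\Sigma_v)=-6$. Finally $q_{10}\bigl((\widetilde\pi_v^*\circ\lambda_v)(\alpha),\widetilde\Sigma_v\bigr)=0$ for every $\alpha\in v^\perp\otimes_\ZZ\QQ$, because $\widetilde\Sigma_v$ is $\widetilde\pi_v$-exceptional whereas $(\widetilde\pi_v^*\circ\lambda_v)(\alpha)$ is pulled back from $\M_v(S,H)$; this gives the orthogonality of the summand $\QQ\cdot\sigma$ in $(v^\perp\otimes_\ZZ\QQ)\oplus^\perp\QQ\cdot\sigma$. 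These three computations say precisely that the $\QQ$-linear extension of $f_v$ is an isometry of rational quadratic spaces. It is then injective, and since the Mukai lattice has rank $24$ the source has dimension $24=b_2(\widetilde\M_v(S,H))$ by \cite{rapagnettaOG10}, so $f_v\otimes\QQ$ is an isomorphism.

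The main obstacle is the passage to the integral structure: one must show both that $f_v(\Gamma_v)\subseteq H^2(\widetilde\M_v(S,H),\ZZ)$ and that this inclusion is an equality. The inclusion is immediate on the image of the sublattice $v^\perp\oplus\ZZ\cdot\sigma$, where $f_v$ is a $\ZZ$-combination of the integral classes $(\widetilde\pi_v^*\circ\lambda_v)(\gamma)$, $\gamma\in v^\perp$, and $\widetilde\Sigma_v$; the subtle point is the half-integral glue vectors $(\widetilde\pi_v^*\circ\lambda_v)(\beta/2)+\tfrac12\widetilde\Sigma_v$ occurring for odd $k$, i.e.\ for $\beta/2\in(v^\perp)^*\setminus v^\perp$. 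That such a half-class is a genuine integral cohomology class cannot be read off from the lattice formalism and is exactly the place where the explicit geometry of the Lehn--Sorger/O'Grady resolution enters: one analyses the local model of $\M_v(S,H)$ along $\Sigma_v$ and the structure of the exceptional divisor in order to exhibit the relevant combination as an integral divisor class. Once integrality is in hand, surjectivity follows from a discriminant count. The defining conditions present $\Gamma_v$ as the index-two overlattice of $v^\perp\oplus\ZZ\cdot\sigma$ obtained by gluing the nonzero element of the discriminant group $\ZZ/2$ of $v^\perp$ (which is $\ZZ/2$ because $v=2w$ with $w$ primitive of square $2$ in the unimodular Mukai lattice) to the order-two element of the discriminant group $\ZZ/6$ of $\ZZ\cdot\sigma$, so that $|\discr\Gamma_v|=(2\cdot6)/2^2=3$; on the other hand $|\discr H^2(\widetilde\M_v(S,H),\ZZ)|=3$ by \cite{rapagnettaOG10}. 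As $f_v$ is an isometric embedding onto a full-rank sublattice, the index satisfies $[\,H^2(\widetilde\M_v(S,H),\ZZ):f_v(\Gamma_v)\,]^2=|\discr\Gamma_v|/|\discr H^2(\widetilde\M_v(S,H),\ZZ)|=1$, whence $f_v$ is an isomorphism and a Hodge isometry.
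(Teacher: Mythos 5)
The paper itself does not prove this statement: it is quoted verbatim as \cite[Theorem 3.4]{PR:factoriality}, so there is no internal proof to compare against and your attempt must be judged as a reconstruction of Perego--Rapagnetta's argument. The skeleton you propose is sound: the Hodge compatibility is indeed formal; the three pairing computations (isometry of $\widetilde\pi_v^*\circ\lambda_v$ by the very definition of the lattice structure on $H^2(\M_v(S,H),\ZZ)$, the value $q_{10}(\widetilde\Sigma_v)=-6$, and the orthogonality of the exceptional class to pulled-back classes) are correct modulo the cited inputs; and your discriminant bookkeeping is right: $\Gamma_v$ is the index-two even overlattice of $v^\perp\oplus\ZZ\cdot\sigma$ glued along $(\beta_0/2,\sigma/2)$, so $|\discr\Gamma_v|=12/4=3=|\discr H^2(\widetilde\M_v(S,H),\ZZ)|$ by \cite{rapagnettaOG10}. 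The gap is exactly where you flag it: you never establish that $(\widetilde\pi_v^*\circ\lambda_v)(\beta/2)+\tfrac{1}{2}\widetilde\Sigma_v$ is an integral cohomology class for the glue vectors, deferring this to an unexecuted ``analysis of the local model''. Since the entire content of the theorem is the identification of the integral structure, this cannot be left as a black box, and your subsequent surjectivity argument is conditional on it.

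That said, the gap can be closed with the ingredients you have already assembled, by running the logic in the opposite direction. Set $L:=(\widetilde\pi_v^*\circ\lambda_v)(v^\perp)\oplus\ZZ\cdot\widetilde\Sigma_v$ and $M:=H^2(\widetilde\M_v(S,H),\ZZ)$. Your pairing computations show that $L\hookrightarrow M$ is an isometric embedding of full rank $24$ (note $\widetilde\Sigma_v$ cannot lie in the span of the pulled-back classes, being orthogonal to them with $q_{10}(\widetilde\Sigma_v)=-6\neq 0$), whence $[M:L]^2=|\discr L|/|\discr M|=12/3=4$ and $M$ is an even overlattice of $L$ of index $2$. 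Such overlattices correspond to isotropic order-two subgroups of the discriminant group $A_L\cong\ZZ/2\oplus\ZZ/6$; the quadratic form takes the value $-1/2$ on the generator $\beta_0/2$ of the first factor and $-3/2$ on the order-two element $\sigma/2$ of the second, so the unique isotropic element of order two is $(\beta_0/2,\sigma/2)$, with value $-2\equiv 0\bmod 2\ZZ$. Hence $M=L+\ZZ\cdot f_v(\beta_0/2,\sigma/2)=f_v(\Gamma_v)$, which yields integrality of the glue vectors and surjectivity simultaneously. (This route leans entirely on Rapagnetta's computation of the lattice $H^2(\OG10,\ZZ)$, which you already invoke; the actual Perego--Rapagnetta proof instead produces the half-integral classes geometrically, via the $2$-factoriality analysis of $\M_v(S,H)$ recalled in Theorem \ref{thm.factorial}.)
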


Given a Mukai vector $v\in H^*(S,\ZZ)$ that is non primitive, the moduli spaces $\M_v(S,H)$ can happen to be non locally factorial. We recall here the definition of locally and $k$-factorial.

\begin{defn} Let $X$ be a normal projective variety, and let $A^1(X)$ be the group of Weil divisors of $X$, up to linear equivalence; consider the natural inclusion $d:\Pic(X)\rightarrow A^1(X)$ that to any line bundle associates its Weil divisor. $X$ is said to be $k$-factorial if the cokernel of $d$  is $k$-torsion, and it is said to be locally factorial if $d$ is an isomorphism.
\end{defn}

The moduli spaces $\M_v(S,H)$ are either locally factorial or 2-factorial, according to the following criterion, see \cite[Theorem 1.1]{PR:factoriality}:

\begin{thm}[Perego, Rapagnetta] \label{thm.factorial} Assume the notation in Notation \ref{notation.S} and let us consider a Mukai vector of the form $v=2w\in H^*(S,\ZZ)$. Then 
\begin{itemize}
\item $\M_v(S,H)$ is 2-factorial if and only if it exists $\gamma\in (H^*(S))^{1,1}$ such that \\ $<\gamma, w>=1$.
\item $\M_v(S,H)$ is locally factorial if and only if for any $\gamma\in (H^*(S))^{1,1}$ one has $<\gamma, w>\in 2\mathbb Z$.
\end{itemize}
\end{thm}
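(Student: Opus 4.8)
The plan is to compute the cokernel of the inclusion $d\colon\Pic(\M_v)\hookrightarrow A^1(\M_v)$ by transporting the whole question to the symplectic resolution $\tilde\pi_v\colon\widetilde\M_v\to\M_v$ of Theorem~\ref{thm.desing.Mv} and reading it off the lattice $H^2(\widetilde\M_v,\ZZ)$. First I would note that the singular locus $\Sigma_v$ (the strictly semistable locus) has codimension at least two, with irreducible exceptional divisor $\widetilde\Sigma_v$; since deleting a closed set of codimension $\ge2$ does not change the class group, $A^1(\M_v)=\Pic(\M_v^s)$, and proper transform identifies the latter with $\Pic(\widetilde\M_v)/\ZZ[\widetilde\Sigma_v]$. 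Under these identifications $d$ sends $\mathcal L$ to $[\tilde\pi_v^*\mathcal L]$, so that
\[
\operatorname{coker}(d)\;\cong\;\Pic(\widetilde\M_v)\big/\bigl(\tilde\pi_v^*\Pic(\M_v)+\ZZ[\widetilde\Sigma_v]\bigr).
\]

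I would then make the right-hand side explicit via the Hodge isometry $f_v\colon\Gamma_v\xrightarrow{\sim}H^2(\widetilde\M_v,\ZZ)$ of Theorem~\ref{thm.Gamma_v}. Here $\Pic(\widetilde\M_v)$ is the group of integral $(1,1)$-classes, i.e.\ the $(\beta/2,k\sigma/2)$ with $\beta\in(v^\perp)^{1,1}$, $\langle\beta,v^\perp\rangle\subseteq2\ZZ$ and $k$ even $\Leftrightarrow\beta/2\in v^\perp$; the formula for $f_v$ gives $[\widetilde\Sigma_v]=f_v(0,\sigma)$, while Corollary~\ref{cor.isom.hodge} identifies $\tilde\pi_v^*\Pic(\M_v)$ with the classes having $k=0$, equivalently $\beta\in2(v^\perp)^{1,1}$. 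A short computation shows that every class with $k$ even already lies in $\tilde\pi_v^*\Pic(\M_v)+\ZZ[\widetilde\Sigma_v]$, that any two classes with $k$ odd differ by such a class, and that twice a $k$-odd class is $k$-even; hence $\operatorname{coker}(d)$ is $0$ or $\ZZ/2\ZZ$, which is already the dichotomy ``locally factorial or $2$-factorial''. Moreover $\operatorname{coker}(d)=\ZZ/2\ZZ$ exactly when a class with $k$ odd exists, that is, when there is $\beta\in(v^\perp)^{1,1}$ with $\langle\beta,v^\perp\rangle\subseteq2\ZZ$ and $\beta\notin2H^*(S,\ZZ)$.

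The remaining task is to translate this existence statement into the criterion on $w$. Since $v=2w$ one has $v^\perp=w^\perp$, and because $H^*(S,\ZZ)$ is even and unimodular the sublattice $w^\perp$ is primitive with cyclic discriminant group $(w^\perp)^{\ast}/w^\perp\cong\ZZ/w^2\ZZ$, possessing a unique element of order two because $w^2$ is even. For the ``if'' direction, given $\gamma\in(H^*(S))^{1,1}$ with $\langle\gamma,w\rangle=1$ I would exhibit the witness $\beta:=w^2\gamma-w$: it is algebraic and orthogonal to $w$, it satisfies $\langle\beta,x\rangle=w^2\langle\gamma,x\rangle\in2\ZZ$ for every $x\in w^\perp$ (as $w^2$ is even), and it is not $2$-divisible since $\beta\equiv w\pmod{2H^*(S,\ZZ)}$ while $w$ is primitive. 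For the converse I would use unimodularity to write $\beta/2=\operatorname{pr}_{w^\perp}(\lambda)$, so that $\beta=2\lambda-tw$ with $t=2\langle\lambda,w\rangle/w^2\in\ZZ$; primitivity of $(H^*(S))^{1,1}$ makes $\lambda$ algebraic, and $\beta\notin2H^*(S,\ZZ)$ forces $t$ odd, whence $\langle\lambda,w\rangle=t\,w^2/2$. In the situation at hand $w^2=2$ (Theorem~\ref{thm.desing.Mv}), so this value is odd; hence $\operatorname{div}(w)$ is odd, and as $\operatorname{div}(w)\mid w^2=2$ we get $\operatorname{div}(w)=1$, i.e.\ $\langle\gamma,w\rangle=1$ for a suitable algebraic $\gamma$. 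Conversely, absence of a witness forces $\operatorname{div}(w)$ even.

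The main obstacle is this last lattice-theoretic matching, and within it the subtlest point is that $\langle\beta,v^\perp\rangle\subseteq2\ZZ$ must be tested against the full $v^\perp$ rather than only its algebraic part $(v^\perp)^{1,1}$: the transcendental classes impose genuine divisibility constraints, and dropping them would make the criterion false. One must also secure the two geometric inputs used at the outset---that $\Sigma_v$ has codimension $\ge2$ and that $\widetilde\Sigma_v$ is irreducible---and keep track of the hypothesis $w^2=2$ (equivalently $\operatorname{div}(w)\in\{1,2\}$), which is what renders the two stated conditions mutually exclusive and exhaustive. Finally, since the very description of $\Gamma_v$ in Theorem~\ref{thm.Gamma_v} already encodes $2$-factoriality, a logically independent argument would instead compute $\Pic(\M_v^s)$ directly through the Donaldson--Mukai morphism, where the obstruction to local factoriality surfaces as the Brauer class of the universal sheaf on $\M_v^s\times S$---which never exists here, because $\langle\gamma,v\rangle=2\langle\gamma,w\rangle$ is always even.
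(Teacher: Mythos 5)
The paper offers no proof of this statement: it is quoted verbatim as \cite[Theorem 1.1]{PR:factoriality}, so there is no argument of the paper's to measure yours against. Taken on its own terms, your derivation is essentially correct as a deduction from the other two quoted Perego--Rapagnetta results. The reduction $\operatorname{coker}(d)\cong\Pic(\widetilde\M_v)/\bigl(\tilde\pi_v^*\Pic(\M_v)+\ZZ[\widetilde\Sigma_v]\bigr)$ is sound (it uses that $\Sigma_v\cong\Sym^2\M_w$ has codimension two, that $\widetilde\Sigma_v$ is irreducible, and the injectivity of $\tilde\pi_v^*$ from Theorem \ref{thm.desing.Mv}.(2)); reading the quotient off $\Gamma_v$ correctly isolates the parity of $k$ as the only invariant; and the lattice translation is right in both directions --- the witness $\beta=w^2\gamma-w$ works, and the converse via unimodularity of the Mukai lattice, primitivity of the algebraic sublattice, and $\divv(w)\mid w^2=2$ closes the loop. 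You are also right to insist that $\langle\beta,v^\perp\rangle\subseteq2\ZZ$ be tested against all of $v^\perp$ and not just its algebraic part. (Incidentally, the displayed definition \eqref{eq.Gamma_v} in the paper writes $\beta\in(v^\perp)^{1,1}$ where it must mean $\beta\in v^\perp$, else $\Gamma_v$ would have rank $3$; your reading is the intended one.)

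Two caveats are worth recording. First, the circularity you yourself flag is real: in \cite{PR:factoriality} the factoriality criterion is established first, by analyzing which determinant line bundles on $\M_v^s$ descend to $\M_v$, and the description of $\Gamma_v$ in Theorem \ref{thm.Gamma_v} is built on top of it --- the half-integral classes $(\beta/2,k\sigma/2)$ with $k$ odd are precisely the trace of $2$-factoriality. So your argument is a consistency check between the two quoted theorems rather than an independent proof, and your closing sketch of a genuinely independent route (via the Brauer obstruction to a universal sheaf on $\M_v^s\times S$) is too coarse: non-existence of a universal family does not by itself decide local versus $2$-factoriality. Second, your entire mechanism presupposes the symplectic resolution, hence $m=2$ and $w^2=2$; the statement as reproduced (and the original theorem) allows any $w$ with $v=2w$, and for $w^2\ge4$ no resolution exists by Theorem \ref{thm.desing.Mv}.(3), so your proof covers only the case actually used in this paper.
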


Notice that, when the moduli space \(\M_v(S,H)\) is locally factorial, one has:
\[
\Gamma_{v_1}\cong v^\perp\oplus^\perp \ZZ\cdot\sigma
\]
as consequence of \cite[Proposition 4.1,(2)]{PR:factoriality}.

\subsection{Lagrangian structure}\label{subsection:Lagrangian}

\begin{notation}\label{notation.S} From now on, $S$ will be a projective $\K3$ surface with $\Pic(S)=\ZZ\cdot H$, where $H$ is ample and $H^2=2$; we call $h:=c_1(H).$
\end{notation}

\begin{rem}\label{rem.conics} Under these hypotheses, the polarization $H$ induces a surjective morphism $f:S\rightarrow|H|^\vee\cong\PP^2$ which has degree 2 and ramifies along a sextic. We will denote by $\iota$ the involution on \(S\) induced by $f$. Furthermore, the pullback $f^*$ induces a bijection among lines of $\PP^2$ and curves in the linear system $|H|$ on $S$, and a bijection among conics in $\PP^2$ and curves in $|2H|$ on $S$.
\end{rem}

We focus here on some particular moduli spaces of sheaves, that are the ones obtained fixing a Mukai vector of the form
\[
v=(0,2h,a).
\] 
This are the models we will use in order to define uniruled divisors on $\OG10$ manifolds. Given this choice of Mukai vector, then there exists a regular morphism: 
\[p_{a}:\M_{(0,2h,a)}(S,H)\rightarrow |2H|
\] 
sending a sheaf in $\M_{(0,2h,a)}(S,H)$ to its Fitting scheme. When $v=(0,2h,a)$ is non primitive (i.e.\ \(a\) is even), we will call \(\widetilde p_a\) the composite morphism:
\[\tilde p_{a}:= p_{a}\circ\tilde\pi:\tilde \M_{(0,2h,a)}(S,H)\rightarrow |2H|
\]
where $\tilde\pi:\widetilde\M_{(0,2h,a)}(S,H)\rightarrow \M_{(0,2h,a)}(S,H)$ is the symplectic desingularization. Thanks to Matsushita's theorem (see \cite{matsushita1999addendum}), $\tilde p_{a}$ is a Lagrangian fibration. 

An easy computation \footnote{This essentially uses the Grothendieck-Riemann-Roch theorem on sheaves in $\M_{(0,2h,a)}(S,H)$, which by definition have first Chern class equal to $2h$.} shows that, given $C\in|2H|$ smooth: \begin{equation}\label{eq.fiber.Pic^d}
p^{-1}_{a}(C)\cong\Pic^{4+a}(C).
\end{equation}
Notice that this implies in particular that any sheaf in $\M_{(0,2h,a)}(S,H)$ on a smooth curve is stable, hence it is a smooth point of the moduli space. We conclude that the moduli spaces $\M_{(0,2h,a)}(S,H)$ and \(\widetilde\M_{(0,2h,a)}(S,H)\) contain as open dense subset the relative Picard scheme $\mathcal J^{4+a}_{|2H|^{sm}}$ of degree \(4+a\), where $|2H|^{sm}\subseteq|2H|$ is the open subset consisting of smooth curves; see Chapter XXI.2 in \cite{ACGH} for a definition of the relative Picard scheme. This property will be crucial in what follows, since it will be used to define divisors in $\M_{(0,2h,a)}(S,H)$.


\section{Uniruled divisors and ampleness}\label{section:uniruled divisors}

Let $S$ be a K3 surface as in Notation \ref{notation.S}. We fix a rational curve $\rho_0\in |H|$ satisfying the following conditions.
\begin{description}
\item[\setword{(a)}{word:condition.a}] Consider the surjective morphism $f:S\rightarrow \PP^2$ discussed in Remark \ref{rem.conics}, and call $\tau'$ the pencil of conics in $\PP^2$ image of the pencil $\tau\subset |2H|$. The line $f(\rho_0)\subseteq \PP^2$ is not tangent to any of the three singular conics in the pencil $\tau'$. 
\item[\setword{(b)}{word:condition.b}] Let \(\delta\subseteq\PP^2\) be the branch locus of \(f:S\to\PP^2\), which is a sextic. The line $f(\rho_0)\subseteq \PP^2$ is not tangent to the conics of $\tau'$ which are tangent to \(\delta\).
\end{description}
Notice that the conditions above are open conditions. We call $\rho\xrightarrow{\nu} \rho_0$ the normalization of $\rho_0$.
\\

We want to define uniruled divisors on IHS manifolds of \(\OG10\) type. To start, we fix two models of \(\OG10\) manifolds given by Theorem \ref{thm.desing.Mv}.(2). We will call:
\begin{equation}\label{eq.def.v1.v2}
v_1:=(0,2h,4),\ v_2:=(0,2h,2)\in H^*(S,\ZZ).
\end{equation}
The vectors $v_1$ and $v_2$ are Mukai vectors, according to the definition given in Section \ref{section:moduli}. We will denote by $\M_{v_1}$ and $\M_{v_2}$ the moduli spaces $\M_{v_1}(S,H)$ and $\M_{v_2}(S,H)$ respectively. The Mukai vectors ${v_1}$ and $v_2$ are non primitive in the Mukai lattice, and they satisfy the hypotheses of Theorem \ref{thm.desing.Mv}.(2). It follows that there exist symplectic desingularizations: 
\[
\tilde\pi_1:\widetilde\M_{v_1}\rightarrow\M_{v_1},\ \tilde\pi_2:\widetilde\M_{v_2}\rightarrow\M_{v_2}
\]  
such that $\widetilde\M_{v_1}$ and $\widetilde\M_{v_2}$ are IHS manifolds of $\OG10$ type. Notice that these moduli spaces contain the following relative Jacobian manifolds on $|2H|^{sm}\subseteq |2H|$ (cf. the discussion in Subsection \ref{subsection:Lagrangian}): 
\[
\mathcal J^8_{|2H|^{sm}}\subseteq \M_{v_1}
\]
\[
\cJ^6_{|2H|^{sm}}\subseteq \M_{v_2}.
\]
This geometrical property will be used to define the uniruled divisors.

\subsection{Two examples of uniruled divisors on \(\widetilde\M_{v_1}\)}\label{subsection:divisor1}

We start with the moduli space $\M_{v_1}$.

\begin{defn}\label{def.D} Let $D_1\subset\M_{v_1}$ be the divisor defined as the closure in $\M_{v_1}$ of the locus in $\mathcal J^8_{|2H|^{sm}}$ consisting of sheaves of the form $i_*\mathcal O_C(4r+p_1+p_2+p_3+p_4)$, where $C\in|2H|^{sm}$, $i:C\hookrightarrow S$ is the inclusion, $p_1,..,p_4\in C$ and $r\in C\cap\rho_0$.
\end{defn}

The multiplicities of the points in the definition of $D_1$ are chosen so that on each smooth curve \(C\subseteq S\) one has \(\cO_C(4r+p_1+p_2+p_3+p_4)\in J^8(C)\), and the line bundle varies in a locus of codimension one in \(J^8(C)\), since \(r\in C\cap \rho_0\) which consists of at most four distinct points. 

\begin{rem}\label{rem.D.schematic} 
We define here a schematic structure on $D_1$, that we will use in the computations that follow. Let $\cF$ be a quasi-universal family for the smooth locus $\M^s_{v_1}$ of the moduli space, i.e.\ $\cF\in \Coh(S\times\M^s_{v_1})$ such that, for any $[F]\in \M^s_{v_1}$, one has $\cF|_{S\times [F]}\cong F$ as a sheaf on $S$. We will use $\cF$ to endow $D_1$ with a schematic structure: the basic idea is to define the divisor $D_1$ as the closure in $\M_{v_1}$ of the support of the first derived functor of the pushforward via $p_{\M^s_{v_1}}:S\times\M^s_{v_1}\rightarrow \M^s_{v_1}$ of the quasi-universal family twisted by a suitable line bundle, that will encode the point with multiplicity 4 on the rational curve $\rho_0$ appearing in the definition of $D_1$.

Let $\rho_0\in |H|$ be the rational curve defining $D_1$, and $\rho\xrightarrow{\nu}\rho_0$ its normalization. Consider the following commutative diagram:
\[
\begin{tikzcd}
\rho\times S\times \M^s_{v_1}\arrow{r}{p_{S\times\M^s_{v_1}}} \arrow{d}{p_{\rho\times\M^s_{v_1}}} & S\times \M^s_{v_1} \arrow{d}{p_{\M^s_{v_1}}}\\
\rho\times\M^s_{v_1} \arrow{r}{p_{\M^s_{v_1}}} & \M^s_{v_1}
\end{tikzcd}
\]
where all the maps are projections to the space written in the subscript of the map. 
Note that the pullback sheaf $\hat{\cF}:=p_{S\times\M^s_{v_1}}^*\cF\in\Coh(\rho\times S\times\M^s_{v_1})$ is supported on the incidence variety: 
\[
I_{\rho\times S\times \M^s_{v_1}}=\{(r,x,[F])\in \rho\times S\times\M^s_{v_1}\ |\ \nu(r),x\in \Supp(F)\}\subseteq \rho\times S\times\M^s_{v_1}.
\]
Now on, with $\hat\cF$ we will always mean the restriction of $\hat\cF$ to its support. Consider also the incidence variety:
\[
I_{\rho\times \M^s_{v_1}}=\{(r,[F])\in\rho\times\M^s_{v_1}\ |\ \nu(r)\in \Supp(F)\}\subseteq \rho\times\M^s_{v_1}.
\]
Finally, consider the following section of $p_{\rho\times\M^s_{v_1}}:\rho\times S\times \M_{v_1}^s\to\rho\times\M^s_{v_1}$:
\begin{align*}
\sigma: \rho\times\M^s_{v_1}&\rightarrow \rho\times S\times\M^s_{v_1} \\
(r,[F])&\mapsto (r,\nu(r),[F])
\end{align*}
and call 
\[
R:=\sigma(I_{\rho\times \M^s_{v_1}})\subseteq I_{\rho\times S\times \M^s_{v_1}}\subseteq \rho\times S\times \M^s_{v_1}.
\]
Since $R$ is a Weil divisor on $I_{\rho\times S\times \M^s_{v_1}}$, we can consider the following sheaf:
\[
\hat\cF\otimes\cI_{R}^{\otimes 4}\in\Coh(I_{\rho\times S\times \M^s_{v_1}}).\]
We define: 
\[
D''_1:=\Supp\Bigl(R^1p_{\rho\times\M^s_{v_1},*}\bigl(\hat\cF\otimes\cI_{R}^{\otimes 4}\bigl)\Bigl)
\]
which is a divisor on $I_{\rho\times\M_v^s}$; here the support is the schematic support of the sheaf. Finally, we consider the closure of its image via the projection \(p_{\M^s_{v_1}}\):
\[
D'_1:=\overline{p_{\M_{v_1}}(D''_1)}^{\M_{v_1}}\subseteq \M_{v_1}.
\]
From $R^2p_{\rho\times\M^s_{v_1},*}\hat\cF\otimes\cI_{R}^{\otimes 4}=0$ it follows that, for every $(r,[F])\in I_{\rho\times\M^s_{v_1}}$:
\begin{align*}
\bigl(R^1p_{\rho\times\M^s_{v_1},*}\hat\cF\otimes\cI_{R}^{\otimes 4}\bigl)_{(r,[F])}&\cong H^1((r,\Supp(F),[F]),\hat\cF\otimes\cI_{R}^{\otimes 4}|_{(r,\Supp(F),[F])}) \\
&\cong H^1(\Supp(F),F(-4\nu(r)))
\end{align*}
hence $(r,[F])\in I_{\rho\times\M^s_{v_1}}$ is in $D_1''$ exactly when $h^1(F(-4\nu(r))> 0$. When $\Supp(F)=C\in |2H|^{sm}$, thanks to the Riemann-Roch theorem on a smooth curve one has: 
\[
\chi(C,F(-4\nu(r)))=h^1(C,F(-4\nu(r))-h^0(C,F(-4\nu(r))=0
\] 
where we used that the curves in \(|2H|\) have genus 5 and that \(F(-4\nu(r))\) is a line bundle of degree 4 on the curve. Notice that this implies the same equality for any $\Supp(F)\in|2H|$, since the Euler characteristic is constant on flat families.  

In particular we get that, from a set-theoretic point of view:
\[
D'_1|_{\cJ^8_{|2H|^{sm}}}=\{[F]\in \M_{v_1}\ |\ \exists r\in\rho_0:\ h^0(F(-4r))>0\}=D_1|_{\cJ^8_{|2H|^{sm}}}.
\]
Since the divisors $D_1$ and $D_1'$ coincide on $\cJ^8_{|2H|^{sm}}$, we can give to $D_1|_{\cJ^8_{|2H|^{sm}}}$ the schematic structure of $D'_1|_{\cJ^8_{|2H|^{sm}}}$, and then to $D_1$ the schematic structure given by the closure of $D_1|_{\cJ^8_{|2H|^{sm}}}$ in $\M_{v_1}$. 

Observe that $p_{\M_{v_1}}$ is generically $4:1$ on $I_{\rho\times \M_{v_1}}$, that is the reason why we are defining $D'_1$ as the image of $D''_1$ via $p_{\M_{v_1}}$, and not as its pushforward: we want $D_1$ to have a reduced schematic structure. 
\end{rem}

The geometrical intuition behind the definition of the divisor $D_1$ is given by the following result.

\begin{lem}\label{lem.D.uniruled} The divisor $D_1$ of Definition \ref{def.D} is uniruled. 
\end{lem}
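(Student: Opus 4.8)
The plan is to exhibit a dominating family of rational curves on $D_1$, whose rationality is inherited from that of $\rho_0$. A general point of $D_1$ is a sheaf $i_*\cO_C(4r+p_1+p_2+p_3+p_4)$ with $C\in|2H|^{sm}$, $r\in C\cap\rho_0$ and $p_1,\dots,p_4\in C$ general. First I would fix the four points $p_1,\dots,p_4$ and let $\cP\subset|2H|\cong\PP^5$ be the pencil of curves passing through them (four general points impose four independent conditions, so $\cP$ is a line in $\PP^5$). The curve $C$ is a member of $\cP$, and $p_1,\dots,p_4$ lie on every member $C_t$, $t\in\cP\cong\PP^1$. The rational curve in $D_1$ is then obtained by letting the quadruple point $4r$ run along $\rho_0$, while $C$ follows inside the pencil $\cP$ so as to keep containing the moving point; the rationality comes precisely from the fact that $\rho_0$ is a rational curve.

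To make this precise I would parametrize by the normalization $\rho\xrightarrow{\nu}\rho_0$ and introduce the incidence correspondence
\[
\Gamma:=\{(x,t)\in\rho\times\cP\ |\ \nu(x)\in C_t\}.
\]
Projecting to $\cP$, the fibre over $t$ is $\rho_0\cap C_t$, which has $\rho_0\cdot C_t=h\cdot 2h=4$ points; projecting to $\rho$, the fibre over a general $x$ is a single member of $\cP$ through $\nu(x)$ (one linear condition on the pencil), so $\Gamma\to\rho$ is birational and $\Gamma$ is rational. Over $\Gamma$ I would build a flat family of sheaves: pulling back the universal curve $\cC\to\cP$ along $\Gamma\to\cP$ gives a family of genus-$5$ curves, the assignment $x\mapsto\nu(x)$ is by construction a section of it, and $p_1,\dots,p_4$ give four constant sections. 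Twisting by $4$ times the first section plus the four constant sections and pushing forward yields a family of pure one-dimensional sheaves over $\Gamma$, flat because the Mukai vector (hence the Hilbert polynomial) is constantly $v_1$. Since the members supported on smooth curves are stable (as recalled after \eqref{eq.fiber.Pic^d}), the moduli property produces a rational map $\Gamma\dashrightarrow\M_{v_1}$, which, composing with the normalization $\PP^1\to\Gamma$ and using properness of $\M_{v_1}$, extends to a morphism $\PP^1\to\M_{v_1}$. Its image lies in $D_1$ by construction, and it is non-constant: otherwise the support $C_t$ would be fixed, forcing $\rho_0\subseteq C_t$, i.e.\ $C_t=\rho_0+\rho'$ with $\rho'\in|H|$, which no member of a general pencil $\cP$ satisfies.

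It then remains to check that these rational curves dominate $D_1$. Through a general $[F]=(C,\cO_C(4r+p_1+p_2+p_3+p_4))$ I would recover the four points as the unique effective divisor in $|\,\cO_C(4r+p_1+p_2+p_3+p_4)(-4r)\,|$: this class has degree $4$ on the genus-$5$ curve $C$, and for a general choice of the $p_i$ it satisfies $h^0(\cO_C(p_1+\cdots+p_4))=1$, so its effective representative is unique (here one uses that no two of the $p_i$ are conjugate under the involution $\iota$ of Remark \ref{rem.conics}, the hyperelliptic involution of $C$). This reconstructs the pencil $\cP$ through $p_1,\dots,p_4$, and the associated rational curve passes through $[F]$ at $x=\nu^{-1}(r)$. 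As $(p_1,\dots,p_4)$ varies we obtain an $8$-dimensional family of rational curves, consistent with $\dim D_1=9$, so the family is dominating and $D_1$ is uniruled.

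The main obstacle, and the reason the rationality of $\rho_0$ is indispensable, is that no ruling can live inside a single fibre of $p_{4}\colon\M_{v_1}\to|2H|$: over a smooth $C$ this fibre is the abelian-variety torsor $\Pic^8(C)$ of \eqref{eq.fiber.Pic^d}, which contains no rational curve, so any rational curve in $D_1$ must move $C$ in $|2H|$. The construction above does exactly this, transporting the rationality of $\rho_0$ into the base direction while the Picard part $p_1+\cdots+p_4$ stays rigid. The remaining technical points—flatness of the family, its extension to a genuine morphism into $\M_{v_1}$, and the genericity guaranteeing that the general $C_t$ is smooth, meets $\rho_0$ transversally in four points, and satisfies $h^0(\cO_C(p_1+\cdots+p_4))=1$—are ensured by Bertini's theorem together with conditions \ref{word:condition.a} and \ref{word:condition.b} on $\rho_0$ and the generic choice of $p_1,\dots,p_4$.
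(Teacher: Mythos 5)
Your proof is correct and follows essentially the same route as the paper: the paper defines the dominant rational map $\phi:\rho_0\times\Sym^4(S)\dashrightarrow D_1$, $(r,[p_1,\dots,p_4])\mapsto i_*\cO_{C_\xi}(4r+p_1+\dots+p_4)$ with $C_\xi$ the unique curve of $|2H|$ through the five points, and your rational curves are exactly its restrictions to $\rho_0\times\{[p_1,\dots,p_4]\}$, unpacked via the incidence correspondence over the pencil through $p_1,\dots,p_4$. Your additional checks (flatness, extension to a morphism by properness, non-constancy, and the dimension count for dominance) are correct elaborations of details the paper leaves implicit.
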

\begin{proof}
There exists a rational dominant map 
\[
\phi:\rho_0\times \Sym^4(S)\dashrightarrow D_1
\] 
defined as follows: for a generic $\xi=\bigl(r,[p_1,p_2,p_3,p_4] \bigl)\in\rho_0\times \Sym^4(S)$, define:
\[
\phi(\xi):=i_*\mathcal O_{C_\xi}(4r+p_1+p_2+p_3+p_4)\in J^8(C_\xi)
\] 
where $i:C_\xi\hookrightarrow S$ is the unique curve of $|2H|$ passing through the five points in $S$ given by $\xi$. Notice that such a curve is actually unique and smooth for a generic choice of $\xi\in \rho_0\times \Sym^4(S)$: it is the pullback via $f:S\rightarrow \PP^2$ of the unique smooth conic passing through the five (general) points $f(r),f(p_1),f(p_2),f(p_3),f(p_4)\in \PP^2$, see Remark \ref{rem.conics}.
\end{proof}

\begin{rem}\label{rem.D1eD1'} We spend here a few words about how the divisors $D_1$ and $D_1'$ introduced in Remark \ref{rem.D.schematic} are related. This will be used only partially in what follows, but we believe it is useful to have a clearer picture in mind. Firstly, we observe that by definition $D_1\subseteq D_1'$. 

In Remark \ref{rem.D.schematic}, we have already noticed that $[F]\in D_1\cap \cJ^8_{|2H|^{sm}}$ if and only if there exists $r\in \Supp(F)\cap \rho_0$ such that $h^0(F(-4r))>0$, and similarly for \([F]\in D_1'\cap\cJ^8_{|2H|^{sm}}\). This is indeed true for any $[F]\in D'_1\cap \M^s_{v_1}$, since as noticed in Remark \ref{rem.D.schematic} the divisor $D_1''\subseteq I_{\rho\times\M_{v_1}^s}$ is the locus of the pairs $(r,[F])\in I_{\rho\times\M_{v_1}^s}$ such that $h^0(F(-4\nu(r)))>0$. The function $h^0$ is upper semicontinuous on flat families (cf. \cite{huy.lehn});  we conclude that any sheaf \([F]\) in $D'_1$ and $D_1$  is such that \(h^0(F(-4r))>0\) for some \(r\in \Supp(F)\cap\rho_0\). Observe that this is only a necessary condition for a sheaf \([F]\) to be in \(D_1\) and in \(D_1'\).

Let $\Sym^2|H|\subseteq |2H|$ be the locus of reducible curves, which is a divisor in $|2H|$ contained in \(|2H|\smallsetminus|2H|^{sm}\). Given the projection $p:\M_{v_1}\rightarrow |2H|$ introduced in Subsection \ref{subsection:Lagrangian}, consider the divisor $p^*\Sym^2|H|\subseteq\M_{v_1}$. Take $[F]\in p^*\Sym^2|H|$ general; this is a stable sheaf, hence it is in $\M^s_{v_1}$; we want to show that $[F]\in D_1'\cap \M^s_{v_1}$, thanks to the characterization of the points in this intersection given above. The sheaf $[F]$ is  supported on a reducible curve $C=C_1\cup C_2$, with $C_i\in|H|$ smooth and $C_1\cap C_2=\{n_1,n_2\}$, since $[F]\in p^*\Sym^2|H|$ is general; also, since $[F]$ is a stable sheaf, we have $\deg(F|_{C_1})=\deg(F|_{C_2})=4$. Now, take $r\in C_1\cap\rho_0$; a non-zero section in $H^0(S,F(-4r))$ is constructed as follows: we glue the zero section on $C_1$ and a non-zero section in $H^0(C_2,F|_{C_2})$ vanishing on $n_1,n_2$, which exists because $\deg(F|_{C_2}(-n_1-n_2))=\deg(F|_{C_2})-2=2$ and $g(C_2)=2$. We conclude that $[F]\in D_1'\cap \M^s_{v_1}$, hence the divisor $D_1'$ contains 
$p^*\Sym^2|H|$.

The divisor $D_1$ is an irreducible divisor, because it is the closure in \(\M_{v_1}\) of an irreducible divisor on $\cJ^8_{|2H|^{sm}}$. We conclude that $D_1\subsetneq D_1'$, and moreover:
\[
D_1'\supseteq D_1+kp^*\Sym^2|H|.
\]
with $k\ge1$.
\end{rem}

By definition, \(D_1\) is an effective Weil divisor on \(\M_{v_1}\). Given the symplectic desingularization \(\widetilde\pi_1:\widetilde\M_{v_1}\to\M_{v_1}\), we consider the strict transform of the divisor:
\[
\widetilde D_1\in \Pic(\widetilde \M_{v_1})
\]
which is an effective uniruled divisor on \(\widetilde \M_{v_1}\), ruled by irreducible and reduced curves. 

\begin{rem}\label{rem.pullback.strict.trans} The pullback divisor $D_1^*:=\tilde\pi_1^*D_1\subseteq \widetilde \M_{v_1}$ is also a uniruled divisor, since it is a combination of $\widetilde D_1$ and the exceptional divisor \(\widetilde \Sigma_1\) of the desingularization, which is uniruled. Anyway, when the singular locus \(\Sigma_1\) of \(\M_{v_1}\) is contained in \(D_1\) with multiplicity \(m\ge 2\), then $D^*_1=\widetilde D_1+m\widetilde\Sigma_1$, where \(\widetilde D_1\) is the strict transform of \(D_1\) via \(\widetilde\pi_1\) and \(m\ge 2\). We get that $D_1^*$ is ruled by non reduced curves, whose irreducible components are the reduced curves ruling \(\widetilde D_1\) and the non reduced curves ruling \(m\widetilde\Sigma_1\), with \(m\ge 2\); in this case we wouldn't be able to apply Corollary \ref{cor.defo.curves} to the curves ruling \(D_1^*\), that is our main tool to get the existence of ample uniruled divisors in some connected components of $\mathfrak M_{\OG10}^{pol}$. We will come back to this point later.
\end{rem}

We call:
\[
B_1:=\widetilde D_1+\widetilde\Sigma_1\in \Pic(\widetilde\M_{v_1})
\]
which is an effective uniruled divisor on \(\widetilde \M_{v_1}\); it is ruled by reducible but reduced curves, whose irreducible components are the reduced curves ruling \(\widetilde D_1\) and the reduced curves ruling \(\widetilde \Sigma_1\).

\subsection{Two examples of uniruled divisors on \(\widetilde\M_{v_2}\)}\label{subsection:divisor2} We pass to the moduli space $\M_{v_2}$. We start defining a divisor \(D_2\subseteq\M_{v_2}\) adapting the definition of $D_1$ to the Mukai vector $v_2$.

\begin{defn}\label{def.D2} Let $D_2\subset \M_{v_2}$ be the divisor defined as the closure in $\M_{v_2}$ of the locus in $\mathcal J^6_{|2H|^{sm}}$ consisting of sheaves of the form $i_*\cO_C(2r+p_1+...+p_4)$, with $C\in|2H|^{sm}$, $i:C\hookrightarrow S$, $p_1,...,p_4\in C$ and $r\in C\cap\rho$.
\end{defn}

\begin{lem} The divisor $D_2$ of Definition \ref{def.D2} is uniruled. 
\end{lem}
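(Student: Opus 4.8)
The plan is to repeat, almost verbatim, the argument used for $D_1$ in Lemma \ref{lem.D.uniruled}, adapting it to the Mukai vector $v_2=(0,2h,2)$. First I would exhibit a rational dominant map
\[
\phi:\rho_0\times\Sym^4(S)\dashrightarrow D_2,
\]
sending a generic $\xi=\bigl(r,[p_1,p_2,p_3,p_4]\bigl)$ to the sheaf $i_*\cO_{C_\xi}(2r+p_1+p_2+p_3+p_4)$, where $i:C_\xi\hookrightarrow S$ is the unique curve of $|2H|$ passing through the five points $r,p_1,\dots,p_4$, and $r$ ranges over $\rho_0$ (equivalently over its normalization $\rho\cong\PP^1$). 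Exactly as in the $D_1$ case, the curve $C_\xi$ is well-defined and smooth for a generic choice of $\xi$: it is the $f$-pullback of the unique smooth conic through the five general points $f(r),f(p_1),\dots,f(p_4)\in\PP^2$, the smoothness being ensured by conditions \ref{word:condition.a} and \ref{word:condition.b} together with Remark \ref{rem.conics}.

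The only bookkeeping that changes relative to $D_1$ is the degree. Here $\cO_{C_\xi}(2r+p_1+\cdots+p_4)$ has degree $2+4=6$, so $\phi(\xi)\in J^6(C_\xi)$, which is consistent with the inclusion $\cJ^6_{|2H|^{sm}}\subseteq\M_{v_2}$ recorded in \eqref{eq.fiber.Pic^d} and in Subsection \ref{subsection:Lagrangian}. By the very definition of $D_2$ as the closure of the locus of such sheaves, the image of $\phi$ is dense in $D_2$, whence $\phi$ is dominant.

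To conclude uniruledness I would observe that the source $\rho_0\times\Sym^4(S)$ is itself uniruled: since $\rho_0$ is a rational curve, through every point of $\rho_0\times\Sym^4(S)$ passes the rational curve $\rho_0\times\{[p_1,\dots,p_4]\}$. The image of a uniruled variety under a dominant rational map is uniruled, so the rational curves $\phi\bigl(\rho_0\times\{[p_1,\dots,p_4]\}\bigr)$ sweep out a dense subset of $D_2$, and therefore $D_2$ is uniruled.

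I do not expect a genuine obstacle, as the argument is formally identical to that of Lemma \ref{lem.D.uniruled}. The one point deserving a line of care is that the modified multiplicity $2r$ (in place of $4r$) still cuts out a divisorial family inside $\cJ^6_{|2H|^{sm}}$: letting $r$ vary over the finite, nonempty set $C\cap\rho_0$ for smooth $C$ moves the line bundle in a codimension-one locus of $J^6(C)$, exactly as in the discussion following Definition \ref{def.D}, so the total locus has dimension $\dim|2H|+\dim J^6(C)-1=9$ and is indeed a divisor in the ten-dimensional $\M_{v_2}$.
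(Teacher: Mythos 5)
Your proposal is correct and follows exactly the route the paper takes: the paper's proof of this lemma literally says it goes as the proof of Lemma \ref{lem.D.uniruled} with the coefficients in $\phi$ adjusted, which is precisely the map $\xi\mapsto i_*\cO_{C_\xi}(2r+p_1+\cdots+p_4)$ landing in $J^6(C_\xi)$ that you write down. Your extra remarks (making explicit that a dominant image of a uniruled variety is uniruled, and checking the degree bookkeeping) are harmless elaborations of the same argument.
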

\begin{proof}
The proof goes exactly as the proof of Lemma \ref{lem.D.uniruled}, only slightly changing the coefficients in the definition of the map $\phi$ in the proof. 
\end{proof}

\begin{rem}\label{rem.D2.schematic} The schematic structure of $D_2$ is given as that of $D_1$ in Remark \ref{rem.D.schematic}: following the same notation, one only needs to consider a quasi-universal family $\cF$ of the moduli space $\M^s_{v_2}$, and then the sheaf $\hat\cF\otimes\cI_R^{\otimes 2}\in \Coh(I_{\rho\times S\times\M^s_{v_2}})$. After these modifications, Remark \ref{rem.D.schematic} applies to the divisors $D''_2$, $D_2'$ and then $D_2$ without any further change.
\end{rem} 

The divisor $D_2$ is by definition an effective Weil divisor. Given the symplectic desingularization
$\tilde\pi_2:\widetilde\M_{v_2}\rightarrow\M_{v_2}$, we consider the strict transforms of the divisor: 
\[
\widetilde D_2\in \Pic(\widetilde \M_{v_2}).
\]
Furthermore, we call:
\[
B_2:=\widetilde D_2+\widetilde \Sigma_2\in  \Pic(\widetilde \M_{v_2}).
\]
Obviously, the divisors $\widetilde D_2$ and \(B_2\) are effective uniruled divisors; as for the divisors \(\widetilde D_1\) and \(B_1\) defined in the previous section, the divisors \(\widetilde D_2\) and \(B_2\) are ruled by reduced curves.

\subsection{Existence of ample uniruled divisors}\label{subsection:ampleness}

After we have defined some uniruled divisors on IHS manifolds, we want to check that these divisors are ample. In this section we will develop a method to check the positivity of a divisor, that we will see in a moment (see Remark \ref{rem.need.positivity}) to be strictly related to our problem. The method we are going to present will be applied in the following sections to the divisors we have defined in Subsection \ref{subsection:divisor1} and Subsection \ref{subsection:divisor2}. 

We start recalling the following definition.

\begin{defn}
Let \(X\), \(X'\) be compact complex manifolds and  \(L\in\Pic(X)\), \(L'\in\Pic(X')\) line bundles on them. We say that the pair \((X,c_1(L))\) is deformation equivalent to the pair \((X',c_1(L'))\) if there exist a smooth proper morphism \(\pi:\cX\rightarrow S\) with \(S\) connected complex variety, a line bundle \(\cL\in\Pic(\cX)\) and two points \(s,s'\in S\) such that \((\cX_s,c_1(\cL_s))\cong (X,c_1(L))\) and \((\cX_{s'},c_1(\cL_{s'}))\cong (X',c_1(L'))\).
\end{defn}

A crucial point of the strategy we are going to present is the following easy remark. 

\begin{rem}\label{rem.need.positivity} Let \(X\) be an IHS manifold and assume we have $D\in\Pic(X)$ effective and with $q_X(D)>0$. Then there exists a deformation of $(X,D)$ where the divisor is ample. Indeed, we can consider a small deformation $(X',D')$ of $(X,D)$ such that $\Pic(X')\cong \ZZ$ and $D'$ is again effective. Since $q_{X'}(D')=q_X(D)>0$ we deduce that $X'$ is projective (see \cite[Theorem 3.11]{huybrechts}), and by $\Pic(X')\cong \ZZ$ we conclude that $D'$ is ample. 

Now assume that $D$ is uniruled, ruled by reduced curves. We will see with Corollary \ref{cor.defo.curves} that on the deformation $X'$ of \(X\) there exists a uniruled divisor $D''$, whose cohomology class is a multiple of the one of $D'$. In conclusion we obtain an ample uniruled divisor on $X'$, proportional to the divisor $D'$ obtained as deformation of $D$ on $X$. 
\end{rem} 

As consequence of Remark \ref{rem.need.positivity}, we are interested to answer the following question.
\begin{question}\label{quest.q(D)} Given a (effective and uniruled) divisor on $X$, with $X$ IHS manifold of $\OG10$ type, how to compute the square of it with respect to the Beauville-Bogomolov-Fujiki form $q_X$ on \(X\)?
\end{question}

We present here an answer to this question in the case of \(\OG10\) manifolds that are desingularized moduli spaces of semistable sheaves $\widetilde \M_v(S,H)$ with $v=2w$, $w$ primitive and $w^2=2$, and when the divisor on \(\widetilde \M_v(S,H)\) is the pull-back a divisor \(D\) on \(\M_v(S,H)\) via the symplectic desingularization \(\widetilde\pi_v:\widetilde\M_{v}(S,H)\to\M_v(S,H)\). By Theorem \ref{thm.desing.Mv}.(2), given a divisor $D\in\Pic( \M_{v}(S,H))$ one has:
\[
q_{10}(\widetilde\pi^*_v D)=q_{10}(D)=(\lambda_v^{-1}D)^2
\] 
where the square on the right is with respect to the Mukai pairing introduced in Section \ref{section:moduli}. As consequence, our goal is to understand the class $\lambda_v^{-1}D$ inside the Mukai lattice; once this is done, the Mukai square $(\lambda_v^{-1}D)^2$ will follow from an easy computation.

Assume the setting in Notation \ref{notation.S}, and that $v$ is as in Theorem \ref{thm.desing.Mv}.(2). Since $\Pic(S)\cong\ZZ$, we have $\rk (v^\perp)^{1,1}=\rk\Pic(\M_v(S,H))=2$; in what follows, we will always denote by $\{e,f\}$ a $\ZZ$-basis of $(v^\perp)^{1,1}$. In order to answer Question \ref{quest.q(D)}, we will follow the following strategy.

\begin{strategy}\label{strategy} Let us assume the hypotheses of Theorem \ref{thm.desing.Mv}.(2), and take $D\in\Pic(\M_v(S,H))$. In order to compute $q_{10}(\tilde\pi^*D)$ we will:
\begin{enumerate}
\item Define two numerically independent curves in $\M_v(S,H)$.
\item Compute the geometric intersection in $\M_v(S,H)$ of the divisors $\lambda_v(e)$, $\lambda_v(f)$ with the curves defined in point (1). 
\item Compute the geometric intersection inside $\M_v$ of $D$ with the curves defined in point (1).
\item Compare what computed in points $(2)$ and $(3)$ to write $\lambda_v^*D$ in terms of the basis $\{e,f\}$ of $(v^\perp)^{1,1}$, and compute $(\lambda_v^*D)^2=q_{10}(\tilde\pi^*D)$.
\end{enumerate}
\end{strategy}

\begin{rem} By the words "numerically independent" in Strategy \ref{strategy}.(1) we mean two curves such that the system of two equations and two variables that we get comparing the intersections in (2) and (3) has a unique solution.
\end{rem}

We will devote Section  \ref{section:generators Mukai and intersection} and Section \ref{section:class of the divisors} to the application of Strategy \ref{strategy} to the divisors  defined in Subsection \ref{subsection:divisor1} and Subsection \ref{subsection:divisor2} on the IHS manifolds $\widetilde\M_{v_1}$ and $\widetilde\M_{v_2}$. Observe that these divisors are not defined as pull-back of divisors on the singular moduli spaces, but they are obtained by pull-back divisors \(\widetilde\pi_1^* D_1\) and \(\widetilde\pi^*_2 D_2\) by adding or removing the exceptional divisors of \(\widetilde\M_{v_1}\) and \(\widetilde\M_{v_2}\) with suitable multiplicity; for this reason, the computation of the class (and the square) of \(\widetilde \pi^*_1 D_1\) and \(\widetilde\pi^*_2 D_2\) is a necessary step. In  Section \ref{section:generators Mukai and intersection} we will develop the computations necessary for part (2) of the Strategy, and in  Section \ref{section:class of the divisors} the computations for part (3) and part (4). We conclude this section with the definition of the curves as in part (1) of the Strategy.

\subsection{Definition of the curves}\label{subsection:def curves} 

Consider:
\begin{itemize}
\item a smooth curve $\gamma\in |2H|$, a fixed point $p_0\in\gamma$, the inclusion $i:\gamma\times\gamma\hookrightarrow S\times\gamma$ and the diagonal $\Delta\subset \gamma\times\gamma$. 
\item four general points $q_1,...,q_4\in S$, their conjugate points $q_{i+4}:=\iota (q_i)$ under the natural involution $\iota$ on $S$, for $i=1,..,4$  (cfr. Remark \ref{rem.conics}), the pencil $\tau\subset |2H|$  defined by taking $\{q_1,...,q_8\}$ as base points, the inclusion  $j:\mathscr C\hookrightarrow S\times\tau$ of the  universal curve of $\tau$. 
\end{itemize}

We start working with the moduli space $\M_{v_1}$.

\begin{defn}\label{def.curves1} Using the notation of above, we define:
\begin{align*}
\mathcal E_{\Gamma_1}&:=i_*\mathcal O_{\gamma\times\gamma}(7p_0\times\gamma+\Delta)\ \in\Coh(S\times\gamma) \\
\mathcal E_{\mathcal T_1}&:=j_* \mathcal O_{\mathscr C}\bigl(2(q_1\times\tau+q_2\times\tau+q_3\times\tau+q_4\times\tau)\bigl)\ \in\Coh(S\times\tau).
\end{align*}

We will call $\Gamma_1$ and $\cT_1$ the curves in $\M_{v_1}$ defined by $\cE_{\Gamma_1}$ and $\cE_{\cT_1}$ respectively.
\end{defn}

The sheaves \(\cE_{\Gamma_1}\) and \(\cE_{\cT_1}\) are 1-dimensional flat families of \(S\)-semistable sheaves on \(S\) with Mukai vector \(v_1\), hence they define curves in \(\M_{v_1}\). The families $\cE_{\Gamma_1}$ and $\cE_{\cT_1}$ have been chosen in order to parametrize a curve \(\Gamma_1\) that is "vertical" with respect to the fibration $p:\M_{v_1}\rightarrow |2H|$ introduced in Subsection \ref{subsection:Lagrangian}, i.e. a curve that is contained in the fiber $J^8(\gamma)\subseteq \M_{v_1}$ (it is a copy of $\gamma$ inside its Jacobian of degree 8), and a curve \(\cT_1\) that is "horizontal" in $\M_{v_1}$, i.e. a section of the pencil $\tau\subset |2H|$. This will give the numerical independence mentioned in point (1) of Strategy \ref{strategy}.

We conclude with the definition of two curves in the moduli space $\M_{v_2}$. 

\begin{defn}\label{def.curves2} Using the notation of above, we define:
\begin{align*}
\mathcal E_{\Gamma_2}&:=i_*\mathcal O_{\gamma\times\gamma}(5p_0\times\gamma+\Delta)\ \in\Coh(S\times\gamma). \\
\mathcal E_{\mathcal T_2}&:=j_* \mathcal O_{\mathscr C}(3q_1\times\tau+q_2\times\tau+q_3\times\tau+q_4\times\tau)\ \in\Coh(S\times\tau).
\end{align*}
We will call $\Gamma_2$ and $\mathcal T_2$ the curves in $\M_{v_2}$ defined by $\mathcal E_{\Gamma_2}$ and $\mathcal E_{\mathcal T_2}$ respectively.
\end{defn}

\section{Generators of the Mukai lattice}\label{section:generators Mukai and intersection}

In this section we will deal with point (2) of Strategy \ref{strategy}, applied to the moduli spaces $\M_{v_1}$ and $\M_{v_2}$. We will treat separately the case of Mukai vector $v_1$ and the case of Mukai vector $v_2$. 

\subsection{The $v_1$-case}\label{subsection:v1 intersection curves}
We start choosing a $\ZZ$-basis for $(v_1^\perp)^{1,1}$. Observe that:
\[
(v_1^\perp)^{1,1}=\{(a,ah,c),\ a,c\in\mathbb Z\}.
\]
We choose the following generators:
\begin{equation}\label{eq.gen.vperp1}
e:=(1,h,0), \ \ f:=(0,0,1).
\end{equation}

Let $\Gamma_1,\cT_1\subset\M_{v_1}$ be the curves introduced in Definition \ref{def.curves1}. The following result answers to point $(2)$ of Strategy \ref{strategy}.

\begin{prop}\label{prop.chern} Let $e,f\in (v_1^\perp)^{1,1}$ be as in \eqref{eq.gen.vperp1}, and consider the isometry $\lambda_{v_1}:(v_1^\perp)^{1,1}\tilde\rightarrow \Pic(\M_{v_1})$ introduced in Corollary \ref{cor.isom.hodge}. One has the following intersections:
\begin{enumerate}
\item $\lambda_{v_1}(e)\cdot \Gamma_1=-5$,\ \ $\lambda_{v_1}(f)\cdot \Gamma_1=0$.
\item $\lambda_{v_1}(e)\cdot \mathcal T_1=-5$,\ \ $\lambda_{v_1}(f)\cdot \mathcal T_1=1$.
\end{enumerate}
\end{prop}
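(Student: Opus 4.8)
The plan is to compute each intersection number by understanding the line bundles $\lambda_{v_1}(e)$ and $\lambda_{v_1}(f)$ concretely and then pulling back along the families $\cE_{\Gamma_1}$ and $\cE_{\cT_1}$ that define the curves $\Gamma_1$ and $\cT_1$. The key observation is that the isometry $\lambda_{v_1}$ of Corollary~\ref{cor.isom.hodge} is realized explicitly in terms of a (quasi-)universal family: for a class $u \in (v_1^\perp)^{1,1}$, the line bundle $\lambda_{v_1}(u)$ is obtained by a Mukai-type construction from the universal family $\cF$ on $S \times \M_{v_1}^s$, roughly $\lambda_{v_1}(u) = \det\bigl(p_{\M,!}(\cF \cdot p_S^*(u^\vee))\bigr)$ up to a standard normalization. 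Restricting this universal construction to a curve $B \hookrightarrow \M_{v_1}$ carrying a flat family $\cE_B$ of sheaves, the degree $\lambda_{v_1}(u)\cdot B$ becomes a Mukai pairing between $u$ and the Mukai vector of the restricted family, computed on $S$ via Grothendieck-Riemann-Roch.

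First I would make the Mukai-pairing formula precise: I expect that for a curve $B$ with family $\cE_B \in \Coh(S \times B)$ whose fibers have Mukai vector $v_1$, one has
\[
\lambda_{v_1}(u)\cdot B = -\langle u, \, p_{B,*}(\cE_B) \rangle_{\text{suitably interpreted}},
\]
i.e.\ the intersection is read off from the first Chern class of $p_{B,!}(\cE_B \otimes p_S^* u^\vee)$ on $B$, which by GRR is a polynomial in the Chern characters of $\cE_B$ and the components of $u$. Concretely I would apply GRR to the projection $p_B : S \times B \to B$ for the twisted family, extract the degree-one part on $B$, and match it against the entries of $u = (r,l,s)$.

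For part (1), the family $\cE_{\Gamma_1} = i_* \cO_{\gamma\times\gamma}(7 p_0 \times \gamma + \Delta)$ is supported on $\gamma \times \gamma \subset S \times \gamma$, so I would compute $p_{\gamma,!}$ of the twist of this sheaf by $u^\vee = (r, -l, s)$, using that along the fiber over a point of the base $\gamma$ the sheaf is a degree-$8$ line bundle on the fixed curve $\gamma$ and that the only variation is through the diagonal term $\Delta$. The diagonal contributes the essential variation; pairing against $e = (1,h,0)$ should produce $-5$ (tied to the number $h \cdot \gamma$-type data and the $7 p_0$ normalization making the curve a degree-$7$–shifted copy of $\gamma$ in its Jacobian), while pairing against $f = (0,0,1)$ gives $0$ because $f$ is a pure point class and the vertical curve $\Gamma_1$ lies in a single fiber $J^8(\gamma)$, on which the determinant-of-point-class bundle is trivial. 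For part (2), the family $\cE_{\cT_1} = j_* \cO_{\mathscr C}(2\sum_{k=1}^4 q_k \times \tau)$ lives over the pencil $\tau$ via the universal curve $\mathscr C \to \tau$; here $\cT_1$ is horizontal, a section of $p : \M_{v_1} \to |2H|$ over the line $\tau \subset |2H|$. The pairing with $e$ should again give $-5$ by the same determinantal computation restricted to $\mathscr C$, and the pairing with $f$ gives $1$ because $f = (0,0,1)$ now detects the degree of $\cT_1$ over the base $\tau$ (a single fiber-direction crossing), yielding the nonzero value that distinguishes the horizontal from the vertical curve and supplies the numerical independence promised in Strategy~\ref{strategy}.(1).

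\textbf{Main obstacle.} I expect the hard part to be pinning down the exact normalization constant in the identification of $\lambda_{v_1}$ with the determinantal/Mukai-pairing construction — in particular getting the signs and the precise role of the $\sqrt{\td(S)}$ factor and the twist by $u^\vee$ right, so that the computation lands on $-5$ rather than an off-by-a-constant value. The GRR bookkeeping on $S \times \gamma$ and on the universal curve $\mathscr C \subset S \times \tau$ is mechanical once the pairing formula is fixed, so the whole proof hinges on stating the restriction-of-$\lambda_{v_1}$ formula cleanly and verifying it against a known case; after that, both parts reduce to evaluating intersection numbers of explicit divisors $(7 p_0 \times \gamma + \Delta)$ and $(2\sum q_k \times \tau)$ against the classes coming from $e$ and $f$.
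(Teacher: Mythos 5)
Your proposal is correct and follows essentially the same route as the paper: the paper also realizes $\lambda_{v_1}$ restricted to a curve via the determinantal construction $\det\bigl(\pi_{C,!}(\cE_C\otimes\pi_S^*(\cdot))\bigr)$ applied to classes $[E],[F]\in K(S)$ with Mukai vectors $e^\vee$ and $f^\vee$ (this is Theorem 8.1.5 of Huybrechts--Lehn, quoted as Theorem \ref{thm.huylehn}), and then carries out the Grothendieck--Riemann--Roch bookkeeping on $S\times\gamma$ and on $\mathscr C\cong Bl_{q_1,\dots,q_8}S$ exactly as you outline. Your qualitative explanations of the values ($f$ pairing to $0$ against the vertical curve and to $1$ against the horizontal section) match the paper's computations $[\gamma\times\gamma]\cdot[pt\times\gamma]=0$ and $[\mathscr C]\cdot[pt\times\tau]=1$.
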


A proof of Proposition \ref{prop.chern} is presented in Subsection \ref{subsection:proof.chern}. The fundamental tool that we will use is Theorem \ref{thm.huylehn} below, which is Theorem 8.1.5 in \cite{huy.lehn}. Before stating the theorem, we need to translate some results we have on $\M_{v_1}$ in terms of classes in the Grothendieck group $K(S)$ of $S$.
\\

The Mukai vector of a coherent sheaf defines an homomorphism $\mathfrak v:K(S)\rightarrow H^*(S,\ZZ)$, sending the class of a sheaf to its Mukai vector. As consequence, one can define the usual moduli spaces starting from a class $c\in K(S)$: we will call $\M_c:=\M_{\mathfrak v(c)}$. Since $\chi(E,F)=-<\mathfrak v(E),\mathfrak v(F)>$, given a class $c\in K(S)$ the Mukai vector homomorphism restricts to the orthogonal of $c$ with respect to the Euler pairing: we get $c^\perp\rightarrow \mathfrak v(c)^\perp\subseteq H^*(S,\ZZ)$. If $\mathfrak v^\vee:K(S)\rightarrow H^*(S,\ZZ)$ is the dual Mukai vector homomorphism $c\mapsto \mathfrak v(c)^\vee$, we will call 
\[
\lambda_c:=\lambda_{\mathfrak v(c)}\circ\mathfrak v^\vee|_{c^\perp}:c^\perp\rightarrow H^2(\M_v,\ZZ)
\] 
where $\lambda_{\mathfrak v(c)}$ is the isometry of Theorem \ref{thm.desing.Mv}.(2). The following Lemma is to translate $(v^\perp)^{1,1}$ in $c^\perp$. 

\begin{lem}\label{lemma.perpperp} Following the notation above, let $\alpha\in K(S)$. Then $\alpha\in \{1,h,h^2\}^{\perp\perp}$ if and only if $c_1(\alpha)\in \NS(S)$.
\end{lem}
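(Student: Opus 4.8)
The plan is to transport the whole statement into the Mukai lattice, where it becomes an elementary computation of orthogonal complements. By the Hirzebruch--Riemann--Roch theorem one has $\chi(\alpha,\beta)=-<\mathfrak v(\alpha),\mathfrak v(\beta)>$ for all $\alpha,\beta\in K(S)$, so $\mathfrak v$ identifies $(K(S),\chi)$ isometrically, up to the sign of the form, with the Mukai lattice $(H^*(S,\ZZ),<,>)$. Hence taking orthogonals and double orthogonals with respect to $\chi$ on $K(S)$ is the same as taking them with respect to $<,>$ on $H^*(S,\ZZ)$ after applying $\mathfrak v$. Moreover, since $\sqrt{\td(S)}=(1,0,1)$, the degree-two component of $\mathfrak v(\alpha)$ is exactly $c_1(\alpha)$, so the condition $c_1(\alpha)\in\NS(S)$ is equivalent to asking that the $H^2$-component of $\mathfrak v(\alpha)$ lie in $\NS(S)$. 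This reduces the lemma to the identity $\{1,h,h^2\}^{\perp\perp}=\{(r,l,s)\ |\ l\in\NS(S)\}$ inside the Mukai lattice.

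First I would record the Mukai vectors of the three classes: writing elements of $H^*(S,\ZZ)$ as triples $(r,l,s)$ with $l\in H^2$ and $s$ the coefficient of the point class, these are $(1,0,0)$, $(0,h,0)$ and $(0,0,2)$, the last because $h\cup h$ equals twice the point class, as $h^2=2$. Using $<(r,l,s),(r',l',s')>=l\cdot l'-rs'-sr'$, pairing against a general $\beta=(r,l,s)$ forces $s=0$, $l\cdot h=0$ and $r=0$, so that
\[
\{1,h,h^2\}^\perp=\{(0,l,0)\ |\ l\in H^2(S,\ZZ),\ l\cdot h=0\}=\{0\}\oplus h^\perp\oplus\{0\}.
\]
Pairing a general $\gamma=(R,L,S)$ against this sublattice then forces only $L\cdot l=0$ for every $l\in h^\perp$, i.e.\ $L\in(h^\perp)^\perp$, with $R$ and $S$ free. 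The concluding step is the lattice identity $(h^\perp)^\perp=\NS(S)$: since the K3 lattice $H^2(S,\ZZ)$ is unimodular, the double orthogonal of a sublattice equals its saturation, and $\ZZ\cdot h=\NS(S)$ is already saturated because $h$ is primitive (being a generator of $\Pic(S)$, cf.\ Notation \ref{notation.S}). This gives $\{1,h,h^2\}^{\perp\perp}=\{(R,L,S)\ |\ L\in\NS(S)\}$, which by the first paragraph is precisely the set of $\alpha$ with $c_1(\alpha)\in\NS(S)$.

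The computation is short, so the content sits in two points rather than in a genuine obstacle. The first is bookkeeping: pinning down the Mukai vectors and checking the answer is insensitive to normalization -- for instance, whether $1$ is read as $(1,0,0)$ or as $\mathfrak v(\mathcal O_S)=(1,0,1)$, the extra $H^4$ term is washed out once one also imposes orthogonality to $h^2$, which already yields $r=0$. The second and only conceptual ingredient is the identity $(h^\perp)^\perp=\NS(S)$, which rests on the unimodularity of $H^2(S,\ZZ)$ together with $\Pic(S)=\ZZ\cdot H$, so that $\NS(S)=\ZZ\cdot h$ is a rank-one primitive (hence saturated) sublattice equal to its own double orthogonal. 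Equivalently, one may run the entire argument through saturations: the sublattice generated by $1,h,h^2$ spans $H^0\oplus\QQ h\oplus H^4$ rationally and saturates to $H^0(S,\ZZ)\oplus\NS(S)\oplus H^4(S,\ZZ)$, which is exactly $\{\mathfrak v(\alpha)\ |\ c_1(\alpha)\in\NS(S)\}$.
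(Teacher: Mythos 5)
Your argument is correct and is essentially the paper's own proof transported into the Mukai lattice: both compute $\{1,h,h^2\}^\perp$ to be the classes $(0,l,0)$ with $l\cdot h=0$ (the paper via the Euler pairing and Chern characters, you via $\mathfrak v$ and $\langle\,,\rangle$), and both then conclude from $(h^\perp)^\perp=\NS(S)$. The only addition is your explicit remark that the answer is insensitive to the normalization of the classes $1,h,h^2$, which is a worthwhile sanity check but not a different method.
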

\begin{proof}
Let $\beta\in K(S)$; then $\beta\in\{1,h,h^2\}^\perp$ if and only if:
\begin{itemize}
\item $\chi(\beta,1)=\int \ch(\beta)\ch(\mathcal O_S)\td(S)=2ch_0(\beta)+ch_2(\beta)=0$, i.e.\ $ch_2(\beta)=-2ch_0(\beta)$;
\item $\chi(\beta,h)=\int \ch(\beta)\ch(\mathcal O_S(H))\td(S)=ch_0(\beta)+ch_1(\beta)\cdot h=0$, i.e.\ $ch_1(\beta)\cdot h=-ch_0(\beta)$;
\item $\chi(\beta,h^2)=\int \ch(\beta)\ch(\mathcal O_S(2H))\td(S)=4ch_0(\beta)-2ch_1(\beta)\cdot h=0$, i.e.\ $ch_1(\beta)\cdot h=-2ch_0(\beta)$.
\end{itemize}
Combining the last two points, we get $\ch(\beta)=(0,c_1(\beta),0)$ with $c_1(\beta)\cdot h=0$, i.e.\ $c_1(\beta)\in H^2(S,\mathbb Z)\cap(H^{2,0}(S)\oplus H^{0,2}(S))$. It follows that $\alpha\in\{1,h,h^2\}^{\perp\perp}$ if for any $b\in H^2(S,\mathbb Z)\cap(H^{2,0}(S)\oplus H^{0,2}(S))$ one has 
\[
\int_S (0,b,0)\ch(\alpha)\td(S)=c_1(\alpha)\cdot b=0,
\] 
meaning $c_1(\alpha)\in \NS(S)$.
\end{proof}

\begin{rem} By definition $(v^\perp)^{1,1}=v^\perp\cap (H^0(S,\CC)\oplus \NS(S)_\CC\oplus H^2(S,\CC))$, i.e.\ $a=(a_0,a_1,a_2)\in v^\perp$ belongs to $(v^\perp)^{1,1}$ if and only if $a_1$ is the first Chern class of a line bundle. In other words, Lemma \ref{lemma.perpperp} says that the Mukai vector homomorphism restricts to an homomorphism 
\[
\lambda_c:c^\perp_h:=c^\perp\cap \{1,h,h^2\}^{\perp\perp}\rightarrow (v^\perp)^{1,1}.
\]
\end{rem}

\begin{defn} Let $C$ be a curve, $\mathcal E_C\in \Coh(S\times C)$ and $\pi_s:S\times C\rightarrow S$, $\pi_C:S\times C\rightarrow C$ the projections. We define the homomorphism $\lambda_{\mathcal E_C}:K(S)\rightarrow \Pic(C)$ to be the composition of the following homomorphisms:
\[
\begin{tikzcd}
K(S) \arrow{r}{\pi_S^*} & K(S\times C) \arrow{r}{[\mathcal E_C]\cdot} & K(S\times C) \arrow{r}{\pi_{C,!}} & K(C) \arrow{r}{det} & \Pic(C).
\end{tikzcd}
\]
\end{defn}

\begin{thm}[Theorem 8.1.5 in \cite{huy.lehn}]\label{thm.huylehn}
Following the notation above, let $\mathcal E_C$ be a flat family of semistable sheaves of class $c$ parametrized by a curve $C$, with classifying morphism $\phi_{\mathcal E_C}:C\rightarrow \M_c$. Then the following diagram commutes:
\[
\begin{tikzcd}
c^{\perp}_h \arrow[hook]{r}{\lambda_c} \arrow[hook]{d} & \Pic(\M_c) \arrow{d}{\phi_{\mathcal E_C}^*} \\
K(S) \arrow{r}{\lambda_{\mathcal E_C}} & \Pic(C).
\end{tikzcd}
\]
\end{thm}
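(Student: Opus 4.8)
The plan is to identify both horizontal arrows as two instances of one and the same \emph{determinant-of-cohomology} functor and to read off the commutativity from its compatibility with base change; the only genuine difficulty is that $\M_c$ carries no global universal sheaf, so the comparison has to be carried out on a parameter scheme and then descended. First I would isolate the two formal properties of the assignment $\mathcal{F}\mapsto\lambda_{\mathcal{F}}$ attached, as in the definition preceding the statement, to a $T$-flat family $\mathcal{F}$ on $S\times T$ by $\alpha\mapsto\det\pi_{T,!}(\pi_S^*\alpha\cdot[\mathcal{F}])$. The first is \emph{base change}: for a morphism $g\colon C\to T$ and $\mathcal{F}_C:=(\mathrm{id}_S\times g)^*\mathcal{F}$ one has $\lambda_{\mathcal{F}_C}=g^*\circ\lambda_{\mathcal{F}}$, which is exactly the compatibility of the Knudsen--Mumford determinant of the perfect complex $R\pi_{T,*}(\pi_S^*\alpha\otimes\mathcal{F})$ with derived pullback, the $T$-flatness of $\mathcal{F}$ supplying the needed Tor-independence. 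The second is the \emph{twist formula}: for $L\in\Pic(T)$ the projection formula together with $\det(V\otimes L)=\det(V)\otimes L^{\otimes\mathrm{rk}\,V}$ gives $\lambda_{\mathcal{F}\otimes\pi_T^*L}(\alpha)=\lambda_{\mathcal{F}}(\alpha)\otimes L^{\otimes e(\alpha)}$, where $e(\alpha)$ is the rank of $\pi_{T,!}(\pi_S^*\alpha\cdot[\mathcal{F}])$. Once the dualisation built into $\lambda_c=\lambda_{\mathfrak{v}(c)}\circ\mathfrak{v}^\vee$ is matched with the normalisation of the determinant map, this rank becomes the Mukai pairing, $e(\alpha)=-\langle\mathfrak{v}(\alpha),\mathfrak{v}(c)\rangle$, so that $e(\alpha)=0$ precisely for $\alpha\in c^\perp$.

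Next I would compare the two families through the classifying morphism. By the construction underlying Theorem \ref{thm.desing.Mv}.(2) and Corollary \ref{cor.isom.hodge}, the map $\lambda_c$ is the descent to $\M_c$ of the determinant map attached to a universal object: on a parameter scheme $q\colon R\to\M_c$ (an open subscheme of a Quot scheme carrying a universal quotient $\widetilde{\mathcal{F}}$ and realising $\M_c$ as a good quotient by a reductive group $G$) one has $q^*\lambda_c(\alpha)=\lambda_{\widetilde{\mathcal{F}}}(\alpha)$ with the dualisation convention fixed above. Since $\M_c$ need not carry a universal family, I would pull back along the $G$-torsor $\widetilde{C}:=C\times_{\M_c}R\to C$, over which the classifying morphism $\phi_{\mathcal{E}_C}$ lifts to $\widetilde{\phi}\colon\widetilde{C}\to R$ with $(\mathrm{id}_S\times\widetilde{\phi})^*\widetilde{\mathcal{F}}\cong\mathcal{E}_{\widetilde{C}}\otimes\pi_{\widetilde{C}}^*L$ for some $L\in\Pic(\widetilde{C})$, where $\mathcal{E}_{\widetilde{C}}$ denotes the pullback of $\mathcal{E}_C$. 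Applying the base-change property to $\widetilde{\phi}$ and then the twist formula to the factor $L$, I obtain on $\widetilde{C}$ the identity between the pullbacks of $\phi_{\mathcal{E}_C}^*\lambda_c(\alpha)$ and of $\lambda_{\mathcal{E}_C}(\alpha)$, up to the correction $L^{\otimes e(\alpha)}$.

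Finally I would invoke the orthogonality hypothesis to delete the correction term and descend the equality. For $\alpha\in c^\perp_h$ the condition $\alpha\in c^\perp$ forces $e(\alpha)=-\langle\mathfrak{v}(\alpha),\mathfrak{v}(c)\rangle=0$, so the twisting factor is trivial; this same vanishing is what trivialises the $G$-linearisation on $\lambda_{\widetilde{\mathcal{F}}}(\alpha)$ along the fibres of $q$, so that the identity already established after pullback to $\widetilde{C}$ descends to the asserted equality $\phi_{\mathcal{E}_C}^*\lambda_c(\alpha)=\lambda_{\mathcal{E}_C}(\alpha)$ in $\Pic(C)$. The remaining condition $\alpha\in\{1,h,h^2\}^{\perp\perp}$ of Lemma \ref{lemma.perpperp} serves only to ensure, via Corollary \ref{cor.isom.hodge}, that $\lambda_c(\alpha)$ is the class of an honest line bundle, so that both arrows indeed land in $\Pic(C)$. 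I expect the main obstacle to be exactly this descent step: in the absence of a universal family on $\M_c$ the equality cannot be seen directly on $C$ but must be proved equivariantly on the torsor $\widetilde{C}$, the orthogonality $c^\perp$ playing the double role of killing the $L$-twist and trivialising the linearisation; keeping the dualisation $\mathfrak{v}^\vee$ consistent with the determinant normalisation throughout is the one piece of bookkeeping that has to be done with care.
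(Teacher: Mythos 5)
The paper does not actually prove this statement: it is quoted verbatim from Huybrechts--Lehn (Theorem 8.1.5 of \cite{huy.lehn}), so your attempt has to be measured against the standard argument there. Your overall skeleton is the right one and is essentially theirs: the Knudsen--Mumford determinant of cohomology with its base-change and twist formulas, the construction of \(\lambda_c\) by descent from an open subscheme \(R\) of a Quot scheme carrying the universal quotient, comparison of the two sides through lifts of the classifying morphism, and cancellation of the residual twist by orthogonality to \(c\).

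However, two related steps would fail as you wrote them, and they fail precisely in the regime this paper needs. First, \(\widetilde C:=C\times_{\M_c}R\to C\) is \emph{not} a \(G\)-torsor: \(q\colon R\to\M_c\) is only a good quotient, and it behaves like a principal bundle only over the stable locus. Here the Mukai vector is non-primitive, \(v=2w\) (Theorem \ref{thm.desing.Mv}), so the strictly semistable locus \(\Sigma_v\) is nonempty, and the theorem is applied to curves that meet it --- e.g.\ \(\cT_2\) is explicitly noted not to lie in \(\M^s_{v_2}\) in the proof of Proposition \ref{prop.inters.D2}. Over a strictly semistable point the fiber of \(q\) is a union of orbits, the classifying morphism does not lift globally, and one must instead lift locally on \(C\) (after trivializing \(p_{C*}(\cE_C\otimes\cO(m))\)) and glue the local comparisons, which is where the twist formula and \(\chi(\alpha\cdot c)=0\) are used. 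Second, and more substantively, your claim that \(e(\alpha)=0\) (i.e.\ \(\alpha\perp c\)) alone trivializes the linearization, and that \(\alpha\in\{1,h,h^2\}^{\perp\perp}\) ``serves only'' to ensure \(\lambda_c(\alpha)\) is an honest line bundle, is wrong on the semistable locus: at a polystable point \(\oplus_i F_i^{\oplus n_i}\) the stabilizer is \(\prod_i \mathrm{GL}(n_i)\), acting on the fiber of \(\lambda_{\widetilde{\cF}}(\alpha)\) with weights \(\chi(\alpha\cdot[F_i])\), and Kempf descent requires \emph{all} of these to vanish, not just \(\chi(\alpha\cdot c)\). Since each Jordan--H\"older factor has the same reduced Hilbert polynomial as \(c\), its class lies in \(\QQ\, c+\{1,h,h^2\}^{\perp}\), so the vanishing of the individual weights is exactly what the extra hypothesis \(\alpha\in\{1,h,h^2\}^{\perp\perp}\), combined with \(\alpha\perp c\), provides. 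That condition is thus a load-bearing ingredient of the descent --- the reason Huybrechts--Lehn's \(K_{c,h}\), mirrored by \(c^\perp_h\) here, appears in the statement at all --- and not mere bookkeeping about Hodge type, which is its only role your write-up assigns to it.
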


In other words, given $\alpha\in c^\perp_h$ and a curve $C\subset \M_c$ with universal family $\mathcal E_C$, from Theorem \ref{thm.huylehn} it follows: 
\[
\lambda_c(\alpha)\cdot C=\deg \bigl(\lambda_{\mathcal E_C}(\alpha)\bigl).
\]

\begin{rem}\label{rem.E.F} 
We want to apply Theorem \ref{thm.huylehn} to compute the intersection of $\lambda_{v_1}(e)$ and $\lambda_{v_1}(f)$ with the curves $\Gamma$ and $\cT$. Since the morphism $\lambda_c$ is the composition of $\mathfrak v^\vee$ and $\lambda_{\mathfrak v(c)}$, we need to choose generators $[E],\ [F]\in c^\perp_h$ such that $\mathfrak v(E)=e^\vee=(1,-h,1)$ and $\mathfrak v(F)=f^\vee=(0,0,1)$.
\end{rem}

\subsection{Proof of Proposition \ref{prop.chern}}\label{subsection:proof.chern}

We are going to prove Proposition \ref{prop.chern}, applying Theorem \ref{thm.huylehn} to $[E],[F]\in K(S)$ as in Remark \ref{rem.E.F} and $\mathcal E_{\Gamma_1}$, $\mathcal E_{\mathcal T_1}$ as in Definition \ref{def.curves1}.

\subsubsection{Intersections $1$ of Proposition \ref{prop.chern}} 

Following the notation above, we want to compute here the intersections $\lambda_c(e)\cdot \Gamma_1$ and $\lambda_c(f)\cdot \Gamma_1$. Let $\pi_{S}:S\times \gamma\rightarrow S$ and $\pi_{\gamma}:S\times \gamma\rightarrow \gamma$ be the two projections. Thanks to Theorem \ref{thm.huylehn} we need to compute
\[
c_1\bigl(\pi_{\gamma,!}(\mathcal E_{\Gamma_1}\otimes\pi_S^*E)\bigl)=\ch_1\bigl(\pi_{\gamma,!}(\mathcal E_{\Gamma_1}\otimes\pi_S^*E)\bigl)
\]
and
\[
c_1\bigl(\pi_{\gamma,!}(\mathcal E_{\Gamma_1}\otimes\pi_S^*F)\bigl)=\ch_1\bigl(\pi_{\gamma,!}(\mathcal E_{\Gamma_1}\otimes\pi_S^*F)\bigl).
\]
Using the Grothendieck-Riemann-Roch theorem on the projection $\pi_\gamma:S\times\gamma\rightarrow \gamma$ we obtain: 
\begin{equation}\label{E.gamma1}
\ch_1\bigl(\pi_{\gamma,!}(\mathcal E_{\Gamma_1}\otimes\pi_S^*E)\bigl)=
\pi_{\gamma,*}\bigl[\ch(\pi^*_SE\otimes\mathcal E_{\Gamma_1})\td(T_{\pi_{\gamma}})\bigl]_3 
\end{equation}
and 
\begin{equation}\label{F.gamma1}
\ch_1\bigl(\pi_{\gamma,!}(\mathcal E_{\Gamma_1}\otimes\pi_S^*F)\bigl)=
\pi_{\gamma,*}\bigl[\ch(\pi^*_SF\otimes\mathcal E_{\Gamma_1})\td(T_{\pi_{\gamma}})\bigl]_3
\end{equation}
where $T_{\pi_\gamma}:=T_{S\times\gamma}-\pi^*_{\gamma}T_{\gamma}=\pi^*_S T_S\in K(S\times\gamma)$. Given a class \(\alpha\in H^*(S\times\gamma,\ZZ)\), by the notation \([\alpha]_k\) we mean the \(k^{th}\) component of the vector defining \(\alpha\). We start by computing the expression in \eqref{E.gamma1}. We have:
\[
\ch(\pi^*_SE\otimes\mathcal E_{\Gamma_1})\td(T_{\pi_{\gamma}})=\ch(\mathcal E_{\Gamma_1})\pi^*_S\bigl(\ch(E)\td(S)\bigl),
\]

where $\ch(E)\td(S)=\mathfrak v(E)\sqrt{\td(S)}=(1,-h,1)$ because by definition  $\mathfrak v(E)=(1,-h,0)$, and $\sqrt{\td(S)}=(1,0,1)$ since $S$ is a $\K3$ surface.
It follows:
\begin{equation}\label{E.gamma2}
\pi^*_S\bigl(\ch(E)\td(S)\bigl)=(1,-[H\times\gamma],[pt\times \gamma], 0).
\end{equation}

It remains to compute the Chern character of $\mathcal E_{\Gamma_1}=\Gam$; set $G:=\mathcal O_{\gamma\times\gamma}(7(p_0\times\gamma)+\Delta)$. Using the Grothendieck-Riemann-Roch theorem on the inclusion $i:\gamma\times\gamma\hookrightarrow S\times\gamma$, we get:
\[
\ch_{k}\bigl(\mathcal E_{\Gamma_1} \bigl)=i_*\Bigl(\bigl[\ch (G)\td(T_i)\bigl]_{k-1}\Bigl).
\]
where $T_i:=T_{\gamma\times\gamma}-i^* T_{S\times\gamma}\in K(\gamma\times\gamma)$ and $i_{!}G=i_* G=\mathcal E_{\Gamma_1}$ since $i$ is a closed immersion. We have:
\[
\ch(G)=\Bigl(1,c_1(G),\frac{(c_1(G))^2}{2}\Bigl)=\Bigl(1,7[p_0\times\gamma]+\Delta,3\Bigl),
\]
where we used the following intersections:
\begin{itemize}
\item $\Delta^2=\deg(N_{\Delta/\gamma\times\gamma})=\deg(T_{\gamma})=-\deg(\omega_{\gamma})=-8$, since a curve $\gamma\in|2H|$ has $g(\gamma)=5$;
\item $(p_0\times\gamma)^2=0$;
\item $\Delta\cdot(p_0\times\gamma)=1$.
\end{itemize}

To compute $\td(T_i)$ we argue as follows: by the very definition of $T_i$ and by the following short exact sequence
\[
0\rightarrow T_{\gamma\times\gamma} \rightarrow i^*T_{S\times\gamma}\rightarrow N_{\gamma\times\gamma/S\times\gamma}\rightarrow 0
\]
it follows that $T_i=N^{-1}_{\gamma\times\gamma/S\times\gamma}\in K(\gamma\times\gamma)$.
If $\pi_1:\gamma\times\gamma\rightarrow\gamma$ is the projection on the first factor, then $ N^{-1}_{\gamma\times\gamma/S\times\gamma}= \pi_1^*N^{-1}_{\gamma/S}=\pi^*_1\omega^{-1}_{\gamma}$
where the last equality follows from the adjunction formula on the $\K3$ surface $S$. Since $\td(\omega^{-1}_{\gamma})=\bigl(1,\frac{c_{1}(\omega_{\gamma}^{-1})}{2}\bigl)=(1,-4)$, it follows that:
\[
\td(T_i)=(1,-4[pt\times\gamma],0).
\]
Therefore, we get:
\[
\ch(G)\td(T_i)=(1,3[p_0\times\gamma]+\Delta,-1)
\]
and then:
\begin{equation}\label{chGamma}
\ch(\mathcal E_{\Gamma_1})=\bigl(0,[\gamma\times\gamma],3[p_0\times\gamma]+[i_*\Delta],-1\bigl).
\end{equation}
We can finally compute $\bigl[\ch(\pi^*_SE\otimes\mathcal E_{\Gamma_1})\td(T_{\pi_{\gamma}})\bigl]_3$, inserting in \eqref{E.gamma1} what we have computed in \eqref{E.gamma2} and \eqref{chGamma}:
\[
\bigl[\ch(\pi^*_SE\otimes\mathcal E_{\Gamma_1}) \td(T_{\pi_{\gamma}})\bigl]_3=\bigl[\ch(\mathcal E_{\Gamma_1}) \pi^*_S \bigl(\ch(E)\td(S)\bigl)\bigl]_3=-5,
\]
where we used the following intersections:
\begin{itemize}
\item $(\gamma\times\gamma)(pt\times\gamma)=\deg\bigl(N_{\gamma\times\gamma/S\times\gamma}\bigl)|_{pt\times\gamma}=0$
\item $i_*\Delta\cdot(H\times\gamma)=4$
\item $(p_0\times\gamma)(H\times\gamma)=0$.
\end{itemize}
We conclude that $\lambda_v(e)\cdot\Gamma_1=\pi_{\gamma,*}(-5)=-5$.

The computation of $\lambda_c(f)\cdot \Gamma_1$ is similar; we start from \eqref{F.gamma1}, where now $\mathfrak v(F)=(0,0,1)$. We get:
\begin{align*}
\ch(&\pi^*_SF\otimes\mathcal E_{\Gamma_1})\td(T_{\pi_{\gamma}})=\ch(\mathcal E_{\Gamma_1})\pi^*_S\bigl(\ch(F)\td(S)\bigl)=\ch(\mathcal E_{\Gamma_1})\pi_S^*\bigl(\mathfrak v(F)\sqrt{\td(S)}\bigl) \\
&=\ch(\mathcal E_{\Gamma_1})\pi^*_S(0,0,1)=\bigl(0,[\gamma\times\gamma],3[p_0\times\gamma]+[i_*\Delta],-1\bigl)\bigl(0,0,[pt\times\gamma],0\bigl)
\end{align*}
and then:
\[
\bigl[\ch(\pi^*_SF\otimes\mathcal E_{\Gamma_1})\td(T_{\pi_{\gamma}})\bigl]_3=[\gamma\times\gamma][pt\times\gamma]=0,
\]
i.e.\ $\lambda_v(f)\cdot\Gamma_1=0$.

\subsubsection{Intersections $2$ of Proposition \ref{prop.chern}}

Let $\pi_S:S\times \tau\rightarrow S$ and $\pi_\tau:S\times \tau\rightarrow \tau$ be the projections. We need to compute
\[
c_1\bigl(\pi_{\tau,!}(\mathcal E_{\mathcal T_1} \otimes\pi_S^*E)\bigl)=\ch_1\bigl(\pi_{\tau,!}(\mathcal E_{\mathcal T_1}\otimes\pi_S^*E)\bigl)
\]
and
\[
c_1\bigl(\pi_{\tau,!}(\mathcal E_{\mathcal T_1} \otimes\pi_S^*F)\bigl)=\ch_1\bigl(\pi_{\tau,!}(\mathcal E_{\mathcal T_1}\otimes\pi_S^*F)\bigl).
\]
By the Grothendieck-Riemann-Roch theorem on the projection $\pi_\tau$, we get
\[
c_1\bigl(\pi_{\tau,!}(\mathcal E_{\mathcal T_1}\otimes\pi^*_SE)\bigl)=\pi_{\tau,*}\bigl[\ch(\pi^*_SE\otimes\mathcal E_{\mathcal T_1})\td(T_{\pi_\tau})\bigl]_3
\] 
and
\[
c_1\bigl(\pi_{\tau,!}(\mathcal E_{\mathcal T_1}\otimes\pi^*_SF)\bigl)=\pi_{\tau,*}\bigl[\ch(\pi^*_SF\otimes\mathcal E_{\mathcal T_1})\td(T_{\pi_\tau})\bigl]_3,
\] 
where $T_{\pi_\tau}:=T_{S\times \tau}-\pi_\tau^*T_{\tau}=\pi_S^*T_S$ in $K(S\times \tau)$. We start from $\lambda_v(e)\cdot \mathcal T_1$. 

Since $\td(T_{\pi_\tau})=\pi^*_S \td(S)$, as in the previous section we obtain: 
\begin{equation}\label{E.T}
\ch(\pi^*_SE\otimes\mathcal E_{\mathcal T_1})\td(T_{\pi_\tau})=\ch(\mathcal E_{\mathcal T_1})\bigl(1,-[H\times \tau],[pt\times \tau],0\bigl).
\end{equation}

It remains to compute the Chern characters of $\mathcal E_{\mathcal T_1}$.

Set $T:=\mathcal O_{\mathscr C}\bigl(2(q_1\times\tau+...+q_4\times\tau)\bigl)$, so that $\mathcal E_{\mathcal T_1}=j_* T$. Since $j:\mathscr C\hookrightarrow S\times \tau$ is a closed immersion, $j_* T=j_! T$; using the Grothendieck-Riemann-Roch theorem on $j$, we get: 
\[
\ch(\mathcal E_{\mathcal T})_{k}=j_*\bigl([\ch(T)\td(T_j)]_{k-1}\bigl),
\]
where $T_j:=T_{\mathscr C}-j^*T_{S\times\tau}\in K(\mathscr C)$. 

\begin{rem} There exists an isomorphism $\mathscr C\cong \widetilde S:=Bl_{q_1,...,q_8}S$, under which $q_i\times \tau$ gets mapped into the exceptional divisor $E_i$ over the point $q_i$, $i=1,...,8$. From now on, we will use the blow-up notation.
\end{rem}

We have:
\[
\ch(T)=\Bigl(1,c_1(T),\frac{c_1(T)^2}{2}\Bigl)
=(1,2(E_1+E_2+E_3+E_4),-8).
\]

In order to compute $\ch(\mathcal E_{\mathcal T})$ it remains to compute $\td(T_j)=\Bigl(1, \frac{c_1(T_j)}{2}, \frac{c_1(T_j)^2}{12}\Bigl)$. 

By definition of $T_j$, we need to compute: 
\[
\det(T_{\mathscr C})\otimes \det(T_{S\times \tau}|_{\mathscr C})^{-1} =\omega_{\mathscr C}^{-1} \otimes \pi^*_S(\omega_S)|_{\mathscr C}\otimes \pi^*_{\tau}(\omega_\tau)|_{\mathscr C}.
\]
We will use the following facts:
\begin{itemize}
\item $\omega_S=0$ since $S$ is $\K3$, and then $\pi^*_S(\omega_S)|_{\mathscr C}=0$;
\item since $\tau\cong \mathbb P^1$, $\omega_\tau=\mathcal O_{\tau}(-2)$. Furthermore, for $t\in \tau$, one has $\pi_{\tau}^{-1}(t)|_{\mathscr C}\cong C_{t}\times t$, with $C_t$ the curve corresponding to $t$, and then $\pi^*_{\tau}(\omega_\tau)|_{\mathscr C}=\mathcal O_{\mathscr C}(-2(C_t\times t))$;
\item since $\mathscr C\cong \tilde S$, if $\pi:\mathscr C\rightarrow S$ is the blow-up morphism then $\omega_{\mathscr C}^{-1}\cong\pi^*\omega_S^{-1}-\sum_{i=1}^8E_i=-\sum_{i=1}^8E_i$.
\end{itemize}

We conclude:
\[
\td(T_j)=\Bigl(1, -[C_t\times t]-\sum_{i=1}^8\frac{[E_i]}{2}, 2\Bigl),
\]
where we used that $(C_t\times t)^2=0$ and $(C_t\times t)\cdot E_i=1$, $\forall i=1,...,8$.

In conclusion, we have:
\[
\ch(T)\td(T_j)=\Bigl(1,2[E_1+E_2+E_3+E_4]-[C_t\times t]-\sum_{i=1}^8\frac{[E_i]}{2}, -10 \Bigl)
\]
and we get:
\begin{equation}\label{chT}
\ch(\mathcal E_{\mathcal T_1})=\Bigl(0,[\mathscr C], 2[E_1+E_2+E_3+E_4]-[C_t\times t]-\sum_{i=1}^8\frac{[E_i]}{2},-10\Bigl).
\end{equation}
Using the expression \eqref{chT} just computed, we obtain that the expression \eqref{E.T} equals to:
\begin{align*}
[\mathscr C][pt\times \tau]-&2[\sum_{i=1}^4E_i][H\times \tau]+[C_t\times t][H\times \tau] +\sum_{i=1}^8\frac{[E_i]}{2}[H\times \tau]-10.
\end{align*}
Notice that:
\begin{itemize}
\item $(\mathscr C)(pt\times \tau)=|\{(pt,t)\in S\times \tau\ |\ pt\in C_t\}|$; we can assume $pt=p\notin\{q_1,...,q_8\}$, and such a $p$ determines an unique $C_p\in \tau$. It follows $(\mathscr C)(pt\times \tau)=1$;
\item $E_i\cdot(H\times\tau)=(p_i\times \tau)(H\times \tau)=0$ $\forall i=1,...,8$;
\item $(C_t\times t)(H\times \tau)=C_t\cdot H=4$;
\end{itemize}
It follows that:
\[
\bigl[\ch(\pi^*_SE\otimes\mathcal E_{\mathcal T_1})\td(T_{\pi_\tau})\bigl]_3=-5=\lambda_v(e)\cdot \mathcal T_1.
\]
We pass to $\lambda_v(f)\cdot \mathcal T_1$. We need to compute:
\[
c_1\bigl(\pi_{\tau,!}(\mathcal E_{\mathcal T_1}\otimes\pi^*_SF)\bigl)=j_*\bigl[\ch(\pi^*_SF\otimes\mathcal E_{\mathcal T_1})\td(T_{\pi_\tau})\bigl]_3.
\]
where now
\[
\pi^*_S(\ch(F)\td(S))=\pi^*_S(0,0,1)=(0,0,[pt\times \tau],0).
\]
Using \eqref{chT} we get:
\[
\lambda_v(f)\cdot\mathcal T_1=j_*\Bigl(\bigl[\ch(\pi^*_SF\otimes\mathcal E_{\mathcal T_1})\td(T_{\pi_\tau})\bigl]_3\Bigl)=j_*\Bigl([\mathscr C][pt\times \tau]\Bigl)=1.
\]
\\
This completes the proof of Proposition \ref{prop.chern}.

\subsection{The $v_2$-case}\label{subsection:v2 intersection curves} We start fixing a $\ZZ$-basis for $v_2^\perp$. Observe that:
\[
(v_2^\perp)^{1,1}=\{(2a,ah,b)|\ a,b\in\ZZ\}.
\] 
We choose the following generators:
\begin{equation}\label{eq.e.f.in.M2}
e:=(2,h,0),\ f:=(0,0,1).
\end{equation}
Let $\Gamma_2,\cT_2\subset\M_{v_2}$ be the curves introduced in Definition \ref{def.curves2}.

\begin{prop}\label{prop.inters.lambda4} Let \(e,f\in (v_2^\perp)^{1,1}\) be as in \eqref{eq.e.f.in.M2} and consider the isometry \(\lambda_{v_2}:(v_2^\perp)^{1,1}\tilde{\rightarrow}\Pic(\M_{v_2})\) introduced in Corollary \ref{cor.isom.hodge}. One has the following intersections:
\begin{enumerate}
\item $\lambda_{v_2}(e)\cdot \Gamma_2=-10$, $\lambda_{v_2}(f)\cdot \Gamma_2=0$.
\item $\lambda_{v_2}(e)\cdot \mathcal T_2=-8$, $\lambda_{v_2}(f)\cdot \mathcal T_2=1$.
\end{enumerate}
\end{prop}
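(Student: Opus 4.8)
The plan is to argue exactly as in the proof of Proposition~\ref{prop.chern}, replacing the Mukai vector $v_1$ by $v_2$ and the families $\mathcal E_{\Gamma_1},\mathcal E_{\mathcal T_1}$ by the families $\mathcal E_{\Gamma_2},\mathcal E_{\mathcal T_2}$ of Definition~\ref{def.curves2}. The engine is again Theorem~\ref{thm.huylehn}: for a class $\alpha\in c^\perp_h$ and a curve $C\subset\M_c$ with family $\mathcal E_C$ one has $\lambda_c(\alpha)\cdot C=\deg\lambda_{\mathcal E_C}(\alpha)=\deg c_1\bigl(\pi_{C,!}(\mathcal E_C\otimes\pi_S^*\alpha)\bigr)$. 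As in Remark~\ref{rem.E.F}, I would first choose classes $E,F\in K(S)$ representing the generators \eqref{eq.e.f.in.M2} under $\mathfrak v^\vee$; since now $e=(2,h,0)$, the only structural change with respect to the $v_1$--case is that $E$ has rank two, namely $\mathfrak v(E)=e^\vee=(2,-h,0)$ (while $\mathfrak v(F)=f^\vee=(0,0,1)$ is unchanged), so that $\ch(E)\td(S)=\mathfrak v(E)\sqrt{\td(S)}=(2,-h,2)$ and $\ch(F)\td(S)=(0,0,1)$.

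For statement (1) I would compute $\ch(\mathcal E_{\Gamma_2})$ by Grothendieck--Riemann--Roch along the closed immersion $i\colon\gamma\times\gamma\hookrightarrow S\times\gamma$, as was done for $\mathcal E_{\Gamma_1}$. The Todd contribution $\td(T_i)=(1,-4[pt\times\gamma],0)$ is identical to the $v_1$--case, since $T_i=\pi_1^*\omega_\gamma^{-1}$ does not see the chosen line bundle; the only new numerical input is the self--intersection, which for $G=\mathcal O_{\gamma\times\gamma}(5(p_0\times\gamma)+\Delta)$ gives $c_1(G)^2=10(p_0\times\gamma)\cdot\Delta+\Delta^2=10-8=2$ (using $\Delta^2=-8$, $(p_0\times\gamma)^2=0$, $(p_0\times\gamma)\cdot\Delta=1$). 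This yields $\ch(\mathcal E_{\Gamma_2})=(0,[\gamma\times\gamma],[p_0\times\gamma]+[i_*\Delta],-3)$, after which I would multiply by $\pi_S^*(\ch(E)\td(S))=(2,-[H\times\gamma],2[pt\times\gamma],0)$, extract the degree--three component and push forward, reusing the intersection numbers $[\gamma\times\gamma]\cdot[pt\times\gamma]=0$, $(p_0\times\gamma)\cdot(H\times\gamma)=0$ and $i_*\Delta\cdot(H\times\gamma)=4$ already recorded in Proposition~\ref{prop.chern}; pairing with $F$ is immediate from $\pi_S^*(\ch(F)\td(S))=(0,0,[pt\times\gamma],0)$ together with $[\gamma\times\gamma]\cdot[pt\times\gamma]=0$.

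For statement (2) I would run the same scheme along $j\colon\mathscr C\hookrightarrow S\times\tau$, identifying $\mathscr C\cong Bl_{q_1,\dots,q_8}S$ so that $q_i\times\tau\mapsto E_i$. Again $\td(T_j)=(1,-[C_t\times t]-\sum_{i=1}^8\tfrac{E_i}{2},2)$ is unchanged, and the new numerics come only from $T=\mathcal O_{\mathscr C}(3q_1\times\tau+q_2\times\tau+q_3\times\tau+q_4\times\tau)$, for which $c_1(T)^2=9E_1^2+E_2^2+E_3^2+E_4^2=-12$ by $E_i^2=-1$ and $E_iE_j=0$; this produces an $\ch(\mathcal E_{\mathcal T_2})$ whose top component is $-7$. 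Multiplying by $\pi_S^*(\ch(E)\td(S))$, extracting the top piece and invoking $(\mathscr C)(pt\times\tau)=1$, $E_i\cdot(H\times\tau)=0$ and $(C_t\times t)\cdot(H\times\tau)=C_t\cdot H=4$ from the $v_1$--case should give $\lambda_{v_2}(e)\cdot\mathcal T_2=-8$, and the $F$--pairing again reduces to $(\mathscr C)(pt\times\tau)=1$, giving $\lambda_{v_2}(f)\cdot\mathcal T_2=1$. I expect no genuine conceptual obstacle: the computation is structurally the one already carried out for $\mathcal E_{\Gamma_1},\mathcal E_{\mathcal T_1}$, and the only places that demand care are the two altered self--intersections ($c_1(G)^2=2$ and $c_1(T)^2=-12$) and the propagation of the rank--two factor of $e=(2,h,0)$ through the Chern--character product. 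As an a priori sanity check I would verify that $\mathcal E_{\Gamma_2}$ and $\mathcal E_{\mathcal T_2}$ restrict on each fibre to a degree--$6$ line bundle, so that via \eqref{eq.fiber.Pic^d} they indeed define curves in $\M_{v_2}=\M_{(0,2h,2)}$.
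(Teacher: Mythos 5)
Your proposal is correct and follows exactly the route the paper takes: the paper's own proof simply reruns the argument of Proposition \ref{prop.chern} with $\mathfrak v(E)=(2,-h,0)$, $\ch(\mathcal E_{\Gamma_2})=(0,[\gamma\times\gamma],[p_0\times\gamma]+[i_*\Delta],-3)$ and $\ch(\mathcal E_{\mathcal T_2})$ having top component $-7$, precisely as you predict. All your intermediate numerics ($c_1(G)^2=2$, $c_1(T)^2=-12$, and the reused intersection numbers) agree with the paper's computations.
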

\begin{proof}
The proof goes exactly as the proof of Proposition \ref{prop.chern}, then we will only sketch the necessary computations. Let us fix $[E]$, $[F]\in K(S)$ with $\mathfrak v(E)=e^\vee=(2,-h,0)$ and $\mathfrak v(F)=f^\vee=(0,0,1)$. We will use notation analogous to the ones in the proof of Proposition \ref{prop.chern}.
\begin{itemize}
\item We need to compute
\begin{equation}\label{eq.lamda4}
\lambda_{v_2}(e)\cdot\Gamma_2=\bigl[\ch(\pi^*_S E\otimes\mathcal E_{\Gamma_2})\td(T_{\pi_\gamma})\bigl]_3
\end{equation}
where
\begin{align*}
\ch(\pi^*_S E\otimes\mathcal E_{\Gamma_2})\td(T_{\pi_\gamma})&=\pi^*_S\bigl(\ch(E)\td(S)\bigl)\ch(\mathcal E_{\Gamma_2}) \\
&=\pi^*_S(2,-h,2)\ch(\mathcal E_{\Gamma_2}) \\
&=(2, -[H\times\gamma],2[pt\times\gamma],0)\ch(\mathcal E_{\Gamma_2}).
\end{align*}
Since $\ch(\mathcal E_{\Gamma_2})=i_*\bigl(\ch(G)\td(T_i)\bigl)$, with $G:=\mathcal O_{\gamma\times\gamma}(5p_0\times\gamma+\Delta)$, we have:
\[\ch(G)=(1,5[pt\times\gamma]+\Delta,1)\]
and
\[\td(T_i)=(1,-4[pt\times\gamma],0)\].
Then:
\[
\ch(\mathcal E_{\Gamma_2})=(0,[\gamma\times\gamma],[pt\times\gamma]+[i_*\Delta],-3).
\]
Combining everything in equation \eqref{eq.lamda4} one gets $\lambda_{v_2}(e)\cdot\Gamma_2=-10$.
\item $\lambda_{v_2}(f)\cdot\Gamma_2=0$ follows from 
\[
\ch(\pi^*_S F\otimes\mathcal E_{\Gamma_2})\td(T_{\pi_\gamma})=(0,0,[pt\times\gamma],0)\ch(\mathcal E_{\Gamma_2}).
\]
\item We now need to compute $\lambda_{v_1}(e)\cdot\cT_2=\bigl[\ch(\mathcal E_{\cT_2})(2,-[H\times\tau],2[pt\times\tau],0)\bigl]_3$, where $\ch(\mathcal E_{\cT_2})=j_*\bigl(\ch(T)\td(T_j)\bigl)$, with $T:=\cO_{\mathscr C}(3q_1\times\tau+...+q_4\times\tau)$. One has
\[\ch(T)=(1,[3E_1+...+E_4],-6)\]
and
\[\td(T_j)=\Bigl(1,-[C_t\times t]-\bigl[\sum_{i=1}^8\frac{E_i}{2}	\bigl],2\Bigl).\]
Therefore 
\[
\ch(\cE_{\cT_2})=\Bigl(1,[\mathscr C],[3E_1+...+E_4]-\Bigl[\sum_{i=1}^8\frac{E_i}{2}\Bigl]-[C_t\times t],-7\Bigl)
\]
which gives $\lambda_{v_2}(e)\cdot\cT_2=-8$.
\item $\lambda_{v_2}(f)\cdot\cT_2=1$ follows from 
\[
\ch(\pi^*_S F\otimes\mathcal E_{\cT_2})\td(T_{\pi_\tau})=(0,0,[pt\times\tau],0)\ch(\mathcal E_{\cT_2}).
\]
\end{itemize}
\end{proof}

This concludes point (2) of Strategy \ref{strategy} for the moduli spaces $\M_{v_1}$ and $\M_{v_2}$.


\section{The classes of the divisors}\label{section:class of the divisors}

In this section we will deal with point (3) and (4) of Strategy \ref{strategy}, applied to the uniruled divisors $D_1$ and $D_2$ defined in Section \ref{section:uniruled divisors}. Notice that manifolds $\M_{v_1}$ and $\M_{v_2}$, where the divisors $D_1$ and $D_2$ live, are not smooth; it follows that a Weil divisor can happen to be non Cartier. As consequence, a preliminary step in the computation in Strategy \ref{strategy}.(3) will actually be to check that $D_1$ and $D_2$ are Cartier divisors.

This section in the technical core of this article, and it is the longest one. For this reason, we briefly sketch here the structure of the section, to guide the reader through the computations that will come. 

The section is divided in two subsections: in the first one we will deal with the divisor $D_1$ and in the second one with the divisor $D_2$. In both the subsections, we proceed as follows: we check that the divisor $D_i$ is Cartier, then we compute the intersections $D_i\cdot \Gamma_i$ and $D_i\cdot\cT_i$. Once done, following that Strategy \ref{strategy} we compute the class of $D_i^*$ in $H^2(\widetilde\M_{v_i},\ZZ)$ and the square $q_{10}(D_i)=q_{10}(D_i^*)$. As observed in Remark \ref{rem.pullback.strict.trans}, we prefer to work with the strict transform $\widetilde D_i\subseteq\widetilde\M_{v_i}$ instead of $D_i^*$; for this reason, once we compute the class of $D_i^*$ in $H^2(\widetilde\M_{v_i},\ZZ)$, we pass to the divisor $\widetilde D_i$ and we compute its class and square; we conclude computing the class and the square of the divisor \(B_i=\widetilde D_i+\widetilde\Sigma_i\).

\subsection{The class of $\widetilde D_1$ and of \(B_1\)}\label{subsection:class D1} In this section we work with the uniruled divisor $D_1\subset\M_{v_1}$ introduced in Definition \ref{def.D}.

\begin{rem} The divisor $D_1$ is a Cartier divisor. In order to check it, we just need to apply Theorem \ref{thm.factorial} to our case: $v_1=(0,2h,4)$, then $w=(0,h,2)$; an element $\gamma\in (H^*(S))^{1,1}$ is a vector of the form $(a,bh,c)$, then 
\[
\gamma\cdot (0,h,2)=2b-2a\in 2\ZZ
\] 
for any $\gamma\in (H^*(S))^{1,1}$. It follows that $\M_v$ is locally factorial and the divisor $D_1$ is a Carter divisor.
\end{rem}

As consequence, we can develop point (3) of Strategy \ref{strategy} and  compute the intersection of $D_1$ with the curves $\Gamma_1$ and $\cT_1$ introduced in Definition \ref{def.curves1}.

\begin{rem}\label{rem.theta} Let $C$ be a smooth curve.  $J^k(C)$ is a torsor on the abelian variety $J^0(C)$. If $g$ is the genus of $C$, then the subtorsor of $J^{g-1}(C)$ consisting of effective divisors up to linear equivalence is a theta divisor in $J^{g-1}(C)$ (this can be found for example in \cite{birkenhakelange}). It follows that, for $r\in J^{k-g+1}(C)$, 
\[
\theta_r:=\{r+\alpha|\ \alpha\ \textrm{is effective}\}
\] 
is a theta divisor in $J^k(C)$, since it is a translate of the previous one. This will be used in the following Proposition.
\end{rem}

\begin{prop}\label{prop.D.Gamma}
Let $D_1$ be the divisor introduced in Definition \ref{def.D} and $\Gamma_1$ as in Definition \ref{def.curves1}. Then $D_1\cdot \Gamma_1=20$. 
\end{prop}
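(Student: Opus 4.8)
The plan is to exploit the fact that $\Gamma_1$ is a ``vertical'' curve, contained in the fibre $J^8(\gamma)$ of the Lagrangian fibration $p:\M_{v_1}\to|2H|$ over the smooth curve $\gamma\in|2H|$, and to reduce the computation to the intersection of the Abel--Jacobi image of $\gamma$ with a theta divisor inside the abelian variety $J^8(\gamma)$. Since $\gamma$ is smooth, $J^8(\gamma)\subseteq\cJ^8_{|2H|^{sm}}$ lies in the smooth locus of $\M_{v_1}$, so the Cartier divisor $D_1$ (whose Cartierness follows from local factoriality, established in the preceding remark) restricts to a genuine divisor on $J^8(\gamma)$, and $D_1\cdot\Gamma_1$ may be computed entirely inside this abelian variety.

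First I would identify $\Gamma_1$ explicitly. Restricting $\cE_{\Gamma_1}$ to $\gamma\times\{t\}$ for $t\in\gamma$ yields the line bundle $\cO_\gamma(7p_0+t)$, so $\Gamma_1$ is the image of the map $\gamma\to J^8(\gamma)$, $t\mapsto\cO_\gamma(7p_0+t)$. This is a translate of the Abel--Jacobi embedding $t\mapsto\cO_\gamma(t)$; as translations act trivially on cohomology and the map is injective for $g\ge 1$, the class of $\Gamma_1$ in $H^*(J^8(\gamma))$ is the Abel--Jacobi class, equal by Poincar\'e's formula to $\theta^{g-1}/(g-1)!$, where $\theta$ denotes the class of a theta divisor and $g=g(\gamma)=5$.

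Next I would describe the restriction $D_1|_{J^8(\gamma)}$. By the set-theoretic characterisation in Remark \ref{rem.D.schematic}, a line bundle $L\in J^8(\gamma)$ lies on $D_1$ exactly when $h^0(L(-4r))>0$ for some $r\in\gamma\cap\rho_0$. For a fixed $r$ the class $L(-4r)$ has degree $8-4=4=g-1$, so by Remark \ref{rem.theta} the locus $\{L:h^0(L(-4r))>0\}$ is the theta divisor $\theta_{4r}\subseteq J^8(\gamma)$. Since $\gamma\in|2H|$ and $\rho_0\in|H|$ meet in $\gamma\cdot\rho_0=2H\cdot H=4$ points $r_1,\dots,r_4$, and the reduced schematic structure of Remark \ref{rem.D.schematic} gives multiplicity one, I would conclude $D_1|_{J^8(\gamma)}=\sum_{i=1}^4\theta_{4r_i}$. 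Each theta divisor then meets $\Gamma_1$ in $\theta\cdot\Gamma_1=\theta\cdot\theta^{g-1}/(g-1)!=\theta^g/(g-1)!=g!/(g-1)!=g=5$ points, whence
\[
D_1\cdot\Gamma_1=\sum_{i=1}^4\theta_{4r_i}\cdot\Gamma_1=4\cdot 5=20.
\]

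The main obstacle I anticipate is justifying that the four theta divisors $\theta_{4r_i}$ appear reduced and distinct, so that $D_1|_{J^8(\gamma)}$ really is their sum with multiplicity one: this is where the conditions \ref{word:condition.a}, \ref{word:condition.b} on $\rho_0$ together with genericity of $\gamma$ enter, guaranteeing that $\gamma\cap\rho_0$ consists of four distinct points, that the classes $4r_i$ are pairwise inequivalent (avoiding the $4$-torsion coincidence $4(r_i-r_j)\sim 0$), and that $\Gamma_1$ is not contained in any $\theta_{4r_i}$, so that all intersections are proper. A secondary point is to confirm that no further component of $D_1$ meets $J^8(\gamma)$; this holds because $\gamma$ smooth places $J^8(\gamma)$ inside the open locus $\cJ^8_{|2H|^{sm}}$ on which $D_1$ is, by definition, the closure of the explicitly described locus.
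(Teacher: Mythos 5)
Your proposal is correct and follows essentially the same route as the paper: restrict to the fibre $J^8(\gamma)$, identify $[\Gamma_1]$ with the Abel--Jacobi class $\theta^4/4!$ via Poincar\'e's formula, write $D_1|_{J^8(\gamma)}$ as the sum of the four theta divisors $\theta_{4r_i}$ attached to the points of $\gamma\cap\rho_0$, and conclude $4\cdot\theta^5/4!=20$. The additional care you take about the reducedness and distinctness of the four theta translates and the properness of the intersections is a reasonable refinement of points the paper leaves implicit.
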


\begin{proof}
Let $p:\M_{v_1} \rightarrow |2H|$ the projection defined in Subsection \ref{subsection:Lagrangian}; $\Gamma_1$ is supported inside the fiber $p^{-1}(\gamma)\cong J^{8}(\gamma)$. It follows that we need to compute the intersection of $D_1$ and $\Gamma_1$ only inside $J^{8}(\gamma)$; for simplicity, we set $D_\gamma:=D_1|_{J^{8}(\gamma)}$.

If $AJ:\gamma\rightarrow J^8(\gamma)$ is the Abel-Jacobi map translated by $7p_0$, then 
\[
[\Gamma_1]=AJ_*[\gamma]=\Bigl[\frac{\theta^4}{4!}\Bigl]\in H^2(J^8(\gamma),\ZZ),
\] 
where $[\theta]$ is the class of a theta divisor on $J^8(\gamma)$.
If we denote by $\{r_1,r_2,r_3,r_4\}=\rho_0\cap \gamma$, then $D_\gamma=\sum_{i=1}^4D_{r_i}$, where $D_{r_i}$ is the component of $D_\gamma$ obtained by fixing the point $r_i$ in the intersection $\rho_0\cap\gamma$; by Remark \ref{rem.theta}, $[D_{r_i}]=[\theta]$.

Using the Poincar\'e formula, it follows that:
\[
\Gamma_1\cdot D_1=4\Bigl[\frac{\theta^4}{4!}\Bigl]\cdot[\theta]=4\Bigl[\frac{\theta^5}{4!}\Bigl]=20.
\]
\end{proof}

We pass now to the intersection $D_1\cdot \cT_1$. Before going to the computation we state and prove here two technical results, that we will use in computing the intersection $D_1\cdot \cT_1$ in Proposition \ref{prop.D.T}.

\begin{lem}\label{lemma.D.T.liscia} Let $D_1$ be the divisor introduced in Definition \ref{def.D}, and let $\cT_1$ be as in Definition \ref{def.curves1}. For any $[L]\in\cT_1\cap D_1$ with \(\Supp(L)=C\) smooth curve, it holds $h^0(C,L(-4r))\le 1$ for any $r\in C\cap\rho_0$. As consequence, if $\Supp(L)$ is smooth then there exists an open neighborhood $V\subseteq D_1$ containing $[L]$ such that $V$ is the union of finitely many $V_i$, which are smooth and such that $\cap_i V_i=[L]$.
\end{lem}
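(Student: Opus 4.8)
The plan is to use the hyperelliptic geometry of the smooth members of $|2H|$ to reduce the bound $h^0(C,L(-4r))\le 1$ to a single non-effectivity statement, and then to exclude that statement using the genericity hypotheses \ref{word:condition.a} and \ref{word:condition.b} on $\rho_0$ together with the general position of $q_1,\dots,q_4$. First I would set up the geometry of $C$. A smooth $C\in|2H|$ is the $f$-preimage of a smooth conic $Q\cong\PP^1$, so $\phi:=f|_C\colon C\to\PP^1$ is a double cover and $C$ is hyperelliptic of genus $5$, with hyperelliptic pencil $g^1_2:=\phi^*\cO_{\PP^1}(1)$ and involution $\iota|_C$. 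I record the relations to be used: $H|_C\sim 2g^1_2$; $q_i+\iota(q_i)\sim g^1_2$ for each base point $q_i$ of $\tau$; $C\cap\rho_0\sim H|_C$; and $q_1+\dots+q_8\sim K_C$, since these eight points are the intersection $C\cap C'$ of two members of $\tau$ and $K_C\sim 2H|_C$ by adjunction on the $\K3$ surface $S$. In particular $L=\cO_C(2(q_1+q_2+q_3+q_4))$ has degree $8=2g-2$.

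Next I would perform the numerical reduction. Set $M:=L(-4r)$, of degree $4$ on the genus-$5$ curve $C$; as $\chi(M)=0$ one has $h^0(M)=h^1(M)$, so $h^0(M)\ge 2$ forces $M$ to be special. On a hyperelliptic curve every special complete linear system of positive dimension is compounded with the hyperelliptic pencil, whence $h^0(M)\ge 2$ is equivalent to $M\sim g^1_2+B$ for some effective $B$ of degree $2$. Thus
\[
h^0(C,L(-4r))\ge 2\quad\Longleftrightarrow\quad \cO_C\bigl(2(q_1+q_2+q_3+q_4)-4r-g^1_2\bigr)\ \text{is effective}.
\]
I would then split according to whether $B$ is a fibre of $\phi$ (so $M\sim 2g^1_2=H|_C$, the case $h^0=3$) or not (the case $h^0=2$).

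The hard part, and the only place the hypotheses on $\rho_0$ are used, is to show that the degree-$2$ class $2(q_1+q_2+q_3+q_4)-4r-g^1_2$ is never effective. Using $g^1_2\sim r+\iota(r)$ together with the relations above, I would translate effectivity into an incidence statement in $\PP^2$ for the line $f(\rho_0)$ with respect to the pencil $\tau'$ of conics through $f(q_1),\dots,f(q_4)$: in the case $M\sim H|_C$ the equivalence $2(q_1+\dots+q_4)\sim 4r+(C\cap\rho_0)$ forces an explicit special position of the $f(q_i)$ relative to $f(\rho_0)$, while in the case $h^0=2$ effectivity of $B$ forces $f(\rho_0)$ to be tangent to a member of $\tau'$ of exactly the type excluded either by \ref{word:condition.a} (the three singular conics) or by \ref{word:condition.b} (the conics tangent to the branch sextic $\delta$). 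Combined with the generality of $q_1,\dots,q_4$, this yields the desired contradiction, so $h^0(C,L(-4r))\le 1$ for every $r\in C\cap\rho_0$. I expect the clean bookkeeping of this projective-geometric exclusion, and the separation of the two cases, to be the main technical difficulty.

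For the consequence I would invoke the Riemann singularity theorem. Fibrewise over $|2H|^{sm}$ the divisor $D_1$ is the union $\bigcup_{r\in C\cap\rho_0}(4r+W_{g-1})$ of translates of the theta divisor $W_{g-1}=W_4\subset\Pic^4(C)$, and these assemble, over a neighbourhood of $[C]\in|2H|$, into finitely many local sheets $V_i$ of $D_1$ through $[L]$, one for each $r$ with $[L]\in D_1$. By Riemann's theorem the multiplicity of the sheet attached to $r$ at $[L]$ equals $h^0(C,L(-4r))$, which the first part shows to be $1$; hence each $V_i$ is smooth at $[L]$. Since distinct points of $C\cap\rho_0$ give distinct translates, the sheets $V_i$ are distinct and pass through $[L]$ as smooth branches, which is the local description of $D_1$ near $[L]$ required in the sequel.
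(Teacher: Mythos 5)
Your reduction of the first statement to the non-effectivity of the degree-two class $2(q_1+q_2+q_3+q_4)-4r-\mathfrak g^1_2$ is correct and matches the paper's use of the hyperelliptic structure of the members of $|2H|$ (a special pencil on a hyperelliptic curve is compounded with the $\mathfrak g^1_2$). The gap is in the exclusion step. You propose to rule out this effectivity by translating it into a tangency condition on the line $f(\rho_0)$ with respect to the pencil of conics $\tau'$, excluded by hypotheses \ref{word:condition.a} and \ref{word:condition.b}. No such translation exists: effectivity of $2(q_1+\cdots+q_4)-4r-\mathfrak g^1_2$ is a linear-equivalence condition, namely membership of a fixed class in the two-dimensional locus $W_2\subset J^2(C)$, whereas conditions \ref{word:condition.a} and \ref{word:condition.b} only constrain the incidence of the line $f(\rho_0)$ with finitely many special conics of $\tau'$; in the paper they serve to make the incidence variety $X$ of Proposition \ref{prop.D.T} smooth and play no role in this lemma. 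The genericity that actually does the work is that of the pencil $\tau$, i.e.\ of the points $q_1,\dots,q_4$, and making this precise is the real content of the proof: the paper forms the incidence variety $I\subseteq \M_{(0,2h,0)}\times\Gr(2,6)$ of pairs $([L],\tau)$ with $h^0(L)\ge 2$ and shows that the projection to the eight-dimensional $\Gr(2,6)$ is not dominant, because by Martens' theorem the locus of degree-four sheaves with two sections has dimension $5+2=7$; a positive-dimensional fibre of the other projection then yields $q_1+\cdots+q_4\sim p_1+\cdots+p_4$ with distinct effective divisors, hence $q_i+q_j\in\mathfrak g^1_2$ for some $i,j$, contradicting the non-conjugacy of the $q_i$ under $\iota$. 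Your proposal contains no substitute for this dimension count, so the central claim remains unproved.

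For the second statement your route differs from the paper's and is essentially sound: the paper parametrizes each sheet by $\rho_0\times\Sym^4(S)$ and invokes Zariski's main theorem, while you identify the sheet attached to $r$ fibrewise with the translate $4r+W_4\subset\Pic^8(C)$ and apply the Riemann singularity theorem, $\mathrm{mult}_{L(-4r)}W_4=h^0(C,L(-4r))=1$. This gives smoothness of each fibrewise sheet; you should still add a word on why the sheets assemble to smooth branches of $D_1$ over a neighbourhood of $[C]$ in $|2H|^{sm}$ and why the branches attached to distinct points of $C\cap\rho_0$ meet only in $[L]$, but these are minor compared with the gap above.
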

\begin{proof}
We start proving the second part of the statement, assuming the first one. Take \([L]\in\cT_1\cap D_1\) and \(r\in \rho_0\cap C\) such that \(h^0(C,L(-4r))\ge 1\), where \(C=\Supp(L)\) as in the statement; such \(r\) exists because \([L]\in D_1\). When \(C\) is smooth, by the first part of the statement we have \(h^0(C,L(-4r))=1\), hence $L$ can be uniquely written as a sheaf of the form $i_*\cO_C(\alpha+4r_i)$, where $i:C\hookrightarrow S$ is the inclusion of the curve in the \(\K3\) surface and $\alpha$ is an effective divisor of degree 4 on $C$. It follows that the map defined in Lemma \ref{lem.D.uniruled}:
\begin{align*}
\phi:\rho_0\times \Sym^4(S)&\dashrightarrow D_1 \\
\xi:=(r,[p_1,p_2,p_3,p_4])&\mapsto i_*\cO_{C_\xi}(4r+p_1+p_2+p_3+p_4)
\end{align*}
is injective on (the unique) preimage of $[L]$, and indeed it is injective on an open neighborhood $U$ containing $\phi^{-1}[L]$; up to shrinking it, we can assume that $U$ is smooth in $\rho_0\times \Sym^4(S)$, and that $\phi$ is defined and injective on $U$. Thus, by the Zariski's main theorem, the image of $\phi(U)=:V$ is smooth. We can repeat this construction for any $r_i\in C\cap\rho_0$ such that $h^0(L(-4r_i))=1$ when $C$ is smooth. We get smooth open subsets $V_i\subset D_1$ containing $[L]$; note that $C\cap\rho_0$ consists of at most four points, since $C\cdot\rho_0=4$. The union $V$ of such $V_i$ is an open neighborhood of $[L]$ in $D_1$, and the intersection of the open subsets $V_i$  only consists of the point $[L]$, as in the statement.

We pass to the first part of the statement. We will prove that the condition \(h^0(C,L(-4r))\le 1\) in the statement holds true for a generic choice of the pencil \(\tau\) defining \(\cT_1\); notice that the pencil and the curve \(\cT_1\) is uniquely individuated by the choice of four points on the \(\K3\) surface that are not conjugated via the involution \(\iota\) on \(S\) (cf. Remark \ref{rem.conics}), meaning here that \(\iota(q_i)\neq q_j\) for any \(i,j=1,2,3,4\).

Pencils in the linear system $|2H|\cong \PP^5$ are parametrized by the Grassmannian $\Gr(2,6)$. We call:
\[
I\subseteq \M_{(0,2h,0)}\times\Gr(2,6)
\]
the incidence variety consisting of pairs \(([L],\tau)\in \M_{(0,2h,0)}\times\Gr(2,6)\) such that: \(L|_{\Supp(L)}\cong\cO_{\Supp(L)}(2(q_1+q_2+q_3+q_4)-4r)\) for some \(r\in \Supp(L)\cap\rho_0\), \(h^0(L)\ge2\) and \(Bs(\tau)=\{q_i,\iota(q_i)\}_{i=1}^4\). Furthermore, we call \(p:I\rightarrow\M_{(0,2h,0)}\) and \(q:I\to\Gr(2,6)\) the projections. We want to show that the projection \(q\) is not dominant. 

Assume that \(q:I\to\Gr(2,6)\) is dominant. Since \(\dim(\Gr(2,6))=8\), it follows that the incidence variety \(I\) has dimension at least 8. On the other hand, the locus of \(\M_{(0,2h,0)}\) consisting of sheaves with at least two sections has dimension 7: the moduli space \(\M_{(0,2h,0)}\) contains the relative Jacobian \(\cJ^4_{|2H|^{sm}}\) (see Subsection \ref{subsection:Lagrangian}), and given a smooth curve \(C\in |2H|\) the Brill-Noether locus \(W^1_4(C)\) has dimension \(2\), because \(C\) is hyperelliptic (see Martens' theorem \cite[Theorem 5.1]{ACGH}). We conclude that the generic fiber of \(p:I\to\M_{(0,2h,0)}\) has positive dimension. Two distinct points in \(I\) are in the same generic fiber of \(p:I\to\M_{(0,2h,0)}\) if there exists a (smooth) curve \(C\sim 2H\), points \(q_1,...,q_4,p_1,...,p_4\in C\) such that the \(q_i\)'s and the \(p_i\)'s are not conjugated via the involution \(\iota\), and points \(r,s\in C\cap \rho_0\) such that \[2(q_1+q_2+q_3+q_4)-4r\sim 2(p_1+p_2+p_3+p_4)-4s\] as divisors on \(C\). This holds true if and only if \[q_1+q_2+q_3+q_4\sim p_1+p_2+p_3+p_4-2(s-r)+\xi\] for a 2-torsion cycle \(\xi\in J^0(C)\). Since by assumption the fiber of \(p\) has positive dimension and \(C\cap\rho_0\) and the set of 2-torsion points in \(J^0(C)\) are finite sets, we can assume that the relation above holds true for \(r=s\) and \(\xi=0\), hence
\[
q_1+q_2+q_3+q_4\sim p_1+p_2+p_3+p_4.
\]
Since the two points on the generic fiber of \(p\) we are considering are distinct, it follows that \(h^0(C,\cO_C(q_1+q_2+q_3+q_4))\ge 2\), hence by the geometric version of the Riemann-Roch theorem on a hyperelliptic curve presented in \cite[Section 1]{ACGH} we have \(q_1+q_2+q_3+q_4=\alpha+a_1+a_2\), for some \(a_1,a_2\in C\) and \(\alpha\in\mathfrak g^1_2\) on \(C\). We conclude that there exists \(i,j\) such that \(q_i+q_j\in \mathfrak g^1_2\), which contradicts the assumption that no \(q_i\)'s are conjugated via the involution \(\iota\). We conclude that \(q:I\to\Gr(2,6)\) is not dominant, hence the first part of the statement holds true, and the Lemma is proved.
\end{proof}

\begin{lem}\label{lem.checks.D.T} Up to changing the very general \(\K3\) surface \(S\) and the generic pencil \(\tau\subseteq|2H|\) with base points \(\{q_1,...,q_4,\iota(q_1),...,\iota(q_4)\}\), the following conditions hold true.
\begin{enumerate}
\item The curve \(\cT_1\) defined from the pencil \(\tau\) as in Definition \ref{def.curves1} does not intersect the divisor \(D_1\) in points corresponding to sheaves supported on a reducible curve parametrized by \(\tau\).
\item Take any of the three reducible curves parametrized by  the pencil \(\tau\); we call it \(C:=C_1\cup C_2\). Assume that \(q_1,q_2\in C_1\) and \(q_3,q_4\in C_2\); for any \(r\in C_1\cap \rho_0\) and \(s\in C_2\cap \rho_0\), one has:
\[
h^0(C,\cO_C(2(q_1+q_2)+q_3+q_4-(\iota(q_3)+\iota(q_4))-4r+\alpha+\beta))=0,
\]
\[
h^0(C,\cO_C(2(q_1+q_2)+q_3+q_4-(\iota(q_1)+\iota(q_2))-4s+\alpha+\beta))=0
\]  
for any \(\alpha\in\mathfrak g^1_2\) on \(C_1\) and \(\beta\in \mathfrak g^1_2\) on \(C_2\).
\end{enumerate}
\end{lem}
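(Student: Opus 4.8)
The plan is to prove both statements as general‑position results, exploiting the two degrees of freedom at our disposal: the very general K3 surface $S$, which moves in the $19$‑dimensional moduli space of degree‑two polarized K3 surfaces (so that $\Pic(S)=\ZZ\cdot H$ as in Notation \ref{notation.S}), and the pencil $\tau$, which varies in the Grassmannian $\Gr(2,6)$ of lines in $|2H|\cong\PP^5$ subject to the prescribed base‑point structure $\{q_1,\dots,q_4,\iota(q_1),\dots,\iota(q_4)\}$. In each case the failure of the condition is closed, and the uniform strategy is to realize the bad locus inside a suitable incidence variety fibering over $\Gr(2,6)$ and to show that the projection to $\Gr(2,6)$ is not dominant, so that the generic pencil avoids the coincidence. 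This is exactly the mechanism already used in the second half of the proof of Lemma \ref{lemma.D.T.liscia}, and I would set up the two dimension counts in parallel.

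The geometric input shared by both parts is the structure of the three reducible members $C=C_1\cup C_2$ of $\tau$: each $C_i\in|H|$ is the pullback of a line of the corresponding singular conic, hence a hyperelliptic genus‑two curve; the two components meet in the two nodes $\{n_1,n_2\}=C_1\cap C_2$ lying over the node of that conic; and the eight base points distribute as $q_1,q_2,\iota(q_1),\iota(q_2)\in C_1$ and $q_3,q_4,\iota(q_3),\iota(q_4)\in C_2$. The crucial relation, coming from Remark \ref{rem.conics}, is that $q_i+\iota(q_i)\sim\mathfrak g^1_2$ on the component containing $q_i$; this is what forces the conjugate points $\iota(q_j)$ to enter the divisor classes of (2) once the multidegree of the relevant sheaf is rewritten.

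For (1) I would first pin down the sheaf through which $\cT_1$ passes over a reducible member, namely the flat limit $L_0$ on $C$ of the line bundles $\cO_{C_t}\bigl(2(q_1+q_2+q_3+q_4)\bigr)$, paying close attention to whether $L_0$ is locally free at the nodes and to the multidegree forced by stability. Membership $[L_0]\in D_1$ amounts to $h^0(L_0(-4r))>0$ for some $r\in C\cap\rho_0$; splitting according to whether the point of $\rho_0$ lies on $C_1$ or on $C_2$, and computing the sections through the normalization/restriction sequence on $C=C_1\cup C_2$ (sections on the two genus‑two components together with the gluing at $n_1,n_2$), this membership is governed precisely by the two $h^0$‑quantities displayed in (2). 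Thus (2) is the concrete vanishing that rules out $[L_0]\in D_1$ on each of the three reducible members. To establish (2) itself I would show that, for very general $S$ and generic $\tau$, the sections on $C_1$ and $C_2$ cannot be matched across the nodes—equivalently, that the relevant restriction admits no section vanishing at both $n_1$ and $n_2$—for every $\alpha\in\mathfrak g^1_2$ on $C_1$ and $\beta\in\mathfrak g^1_2$ on $C_2$; as above, this vanishing is placed inside an incidence variety over $\Gr(2,6)$ that is shown not to dominate.

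The hard part will be the sheaf‑theoretic bookkeeping on the reducible curve: correctly identifying the flat limit $L_0$ (its local‑freeness at the nodes and its stable multidegree), translating $[L_0]\in D_1$ into the exact divisor classes of (2), and tracking how the hyperelliptic relation $q_i+\iota(q_i)\sim\mathfrak g^1_2$ produces precisely the combinations $-(\iota(q_3)+\iota(q_4))$ and $-(\iota(q_1)+\iota(q_2))$, which appear asymmetrically according to whether the point $r$ or $s$ from $\rho_0$ lies on $C_1$ or on $C_2$. Once this degeneration analysis is made precise, both conditions reduce to general‑position arguments of the same flavor as Lemma \ref{lemma.D.T.liscia}, and the genericity of $S$ and $\tau$ closes the proof.
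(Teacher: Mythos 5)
Your overall plan (general position in $S$ and $\tau$, incidence varieties over $\Gr(2,6)$, the hyperelliptic relation $q_i+\iota(q_i)\sim\mathfrak g^1_2$ explaining the divisor classes in (2)) points in the right direction, but it contains a genuine error and leaves the substantive steps undone. The error is your criterion for part (1): you write that membership $[L_0]\in D_1$ ``amounts to $h^0(L_0(-4r))>0$ for some $r\in C\cap\rho_0$''. That condition characterizes the larger divisor $D_1'$ of Remark \ref{rem.D.schematic}, not $D_1$; by Remark \ref{rem.D1eD1'} one has $D_1\subsetneq D_1'$ with $D_1'\supseteq D_1+k\,p^*\Sym^2|H|$, and the reducible members of $\tau$ lie precisely in $p^*\Sym^2|H|$. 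Indeed, for $L_0=\cO_C(2(q_1+\dots+q_4))$ and $r\in C_1\cap\rho_0$ the twist $L_0(-4r)$ has multidegree $(0,4)$, and gluing the zero section on $C_1$ with a section of $\cO_{C_2}(2q_3+2q_4-n_1-n_2)$ (degree $2$ on a genus-$2$ curve, hence always effective) gives a nonzero global section; so $h^0(L_0(-4r))>0$ actually holds, and your criterion would yield the opposite of statement (1). The paper instead argues that $D_1$, being the closure of a locus over $|2H|^{sm}$, meets the $5$-dimensional fiber $p^{-1}(C)$ in codimension one, and that $L_0$ is a \emph{generic} point of that fiber; genericity, not a section count, is what excludes membership. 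For the same reason (1) does not reduce to (2): they are separate genericity assertions, with different multidegrees ($(0,4)$ or $(4,0)$ versus $(2,2)$), used at different places in the proof of Proposition \ref{prop.D.T}.

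The second problem is that the genericity itself is the content of the lemma and your proposal only gestures at it. The paper's mechanism is to vary the K3 surface: fixing the singular conic $\gamma=\gamma_1\cup\gamma_2$ in $\PP^2$, it counts parameters of sextics passing through $12$ prescribed points of $\gamma$ and bitangent to a prescribed line (which produces $\rho_0$), concluding that the points $q_1,\dots,q_4,r$ defining $L$ can be chosen freely on $C$; it then shows that freely varying points give a generic degree-$4$ line bundle on $C$, via dominance of $\Sym^5(C)\to J^0(C)$ and of multiplication by $2$ on $J^0(C)$. Your alternative --- non-dominance over $\Gr(2,6)$ of a bad locus --- would require an explicit dimension estimate that you do not supply, and it is not clear it is available if $S$ and $\rho_0$ are held fixed, since the point $r$ is constrained to lie on the fixed curve $\rho_0$. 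Finally, you do not address how to pass from one reducible member of the generic pencil to all three: the paper needs a separate argument here, namely the irreducibility (via miracle flatness) of the incidence variety of pairs (pencil, reducible member), which forbids a rational section of the generically $3{:}1$ cover of $\Gr(2,6)$ and hence forces the statement to hold for all three members simultaneously. Without these ingredients, or a worked-out substitute, the proposal is a plausible outline rather than a proof.
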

\begin{proof}
Fix a singular curve \(C:=C_1\cup C_2\) parametrized by the pencil \(\tau\); we call \(p:\M_{v_1}\to |2H|\) the projection introduced in Subsection \ref{subsection:Lagrangian}. Regarding \((1)\), observe that \(D_1\) intersect \(p^{-1}(C)\) in a locus of codimension one in \(p^{-1}(C)\); it follows that the statement in \((1)\) is proved for the sheaf in \(\cT_1\) supported on \(C\) if we show that \(L:=\cO_C(2(q_1+q_2+q_3+q_4)-4r)\) is generic in \(p^{-1}(C)\), for any \(r\in C\cap\rho_0\). Furthermore, the sheaves in part \((2)\) are proved to be generic once \(L\) is proved  to be generic; since they have degree two on both the irreducible components of \(C\), if generic they have no global sections, hence also \((2)\) will be proved on the curve \(C\).

The first observation is that, up to changing the pencil \(\tau\) and the very general \(\K3\) surface where \(C\) lives, the points \(q_1,...,q_4,r\) defining \(L''\) in \eqref{eq.L''} can vary freely. Indeed, the \(\K3\) surface \(S\) is determined by the choice of a generic sextic in \(\PP^2\); sextics in \(\PP^2\) are parametrized by a projective space of dimension 27. We call \(\gamma=\gamma_1\cup\gamma_2\) the singular conic of \(\PP^2\) such that \(C\) is the pullback of \(\gamma\) on the \(\K3\) surface via the double covering ramified along the sextic. Fixed \(\gamma\), we want the sextic in \(\PP^2\) to intersect \(\gamma\) in 12 distinct points, six on \(\gamma_1\) and six on \(\gamma_2\); once chosen such 12 points on \(\gamma\), sextics in \(\PP^2\) intersecting transversally \(\gamma\) in these points are parametrized by a projective space of dimension 5. Finally, for any choice of \(x\in \gamma_1\) and \(y\in \gamma_2\) with \(x\neq y\) we can consider the line passing through them; this defines a rational curve on the \(\K3\) surface when the sextic is bitangent to it, and since each tangency is a linear condition on the space of the sextics, fixed \(x\) and \(y\) the space of sextics in \(\PP^2\) intersecting \(\gamma\) in the 12 given points and bitangent to the line given by \(x\) and \(y\) are paramentrized by a 3-dimensional projective space.

Summing up, we find that fixed the curve \(\gamma\subseteq\PP^2\) for every choice of \(x,y\in \gamma\) as above there exists a \(\K3\) surface that is a double covering of \(\PP^2\) ramified along a sextic, which intersects \(\gamma\) transversally in 12 points; the curve \(C\) is the pullback of \(\gamma\) via the double covering. Furthermore, the choice of the pencil \(\tau\) is given by any generic choice of 2 points on \(\gamma_1\) and 2 points on \(\gamma_2\), whose preimages are the points \(q_1,...,q_4,\iota(q_1),...,\iota(q_4)\); here, \(\iota\) is the involution on \(S\) given by the double covering. We conclude that, up to changing the very general \(\K3\) surface \(S\) and the pencil \(\tau\), the points \(q_1,...,q_4,r\) defining \(L\) can vary freely.

Varying freely the points \(q_1,...,q_4.r\), the line bundle \(L\) is generic among the line bundles of degree 4 on \(C\). Indeed, this is equivalent for the line bundle \(\cO_C(q_1+q_2+q_3+q_4-2r)\) to be generic among the line bundles of degree 4. This is obtained as combination of the fact that, fixed a point \(p_0\in C\), the map \(\Sym^5(C)\to J^0(C)\), \([p_1,...,p_5]\mapsto \sum_{i=1}^5 (p_i-p_0)\) is dominant and generically injective, and the fact that the map \(J^0(C)\to J^0(C)\), \(x\mapsto 2x\) is dominant. We conclude that the statements in \((1)\) and \((2)\) hold true on the curve \(C=C_1\cup C_2\).

We have proved the statement  for one of the three singular curves parametrized by the pencil \(\tau\), and we want to show that it holds true also for the remaining two singular curves, always for a generic choice of the pencil \(\tau\). The Grassmannian \(\Gr(2,6)\) parametrizes the pencils in \(|2H|\cong\PP^2\); we call \(\cG\) the tautological bundle on \(\Gr(2,6)\), and we consider its projectivization \(\pi:\PP(\cG)\to\Gr(2,6)\), whose points are written as pairs \((\tau,t)\), with \(\tau\) pencil in \(\PP^5\) and \(t\in\tau\). Also, let \(R\subseteq |2H|\) be the subset of reducible curves having two smooth irreducible components intersecting in two distinct points. Given the inclusion \(j:R\hookrightarrow |2H|\) and the projection \(p:\PP(\cG)\rightarrow |2H|\), \((\tau,t)\mapsto t\), we consider the pullback: 
\[
\begin{tikzcd}
I:=\PP(\cG)\times_{|2H|} R \arrow{r}{\hat p}\arrow{d}{\hat j} & R \arrow{d}{j} \\
\PP(\cG) \arrow{r}{f} & \textbar 2H\textbar 
\end{tikzcd}
\]
Observe that \(I\cong\{((\tau,t)\in \Gr(2,6)\times R\ |\ t\in \tau\}\). Since \(\PP(\cG)\) and \(|2H|\) are smooth varieties and \(f\) has equidimensional fibers, by miracle flatness the morphism \(f\) is flat. It follows that \(\hat f\) is flat as well, hence any irreducible component of \(I\) dominates \(R\); but the fibers of \(f\) are irreducible, hence only an irreducible component of \(I\) can dominate \(R\). We conclude that \(I\) is irreducible, hence there exists no rational section of the generically 3:1 dominant map  \(\pi':I\to \Gr(2,6)\) induced by the projection \(\pi:\PP(\cG)\to \Gr(2,6)\).

Take a generic pencil \(\tau\in \Gr(2,6)\). We have proved that there exists a reducible curve \(C\) of the pencil such that the statement in \((1)\) and the statement in \((2)\) hold true. Assume that the statement in \((1)\) (resp. in \((2)\)) does not hold true for the remaining two reducible curves of \(\tau\); this would give a rational section \(\Gr(2,6)\dashrightarrow I\), associating to the generic pencil \(\tau\) the only reducible curve such that the statement in \((1)\) (resp. in \((2)\)) holds true. Similarly, one construct a rational section \(\Gr(2,6)\dashrightarrow I\) if the statement in \((1)\) (resp. in \((2)\)) holds true for only one of the remaining two reducible curves, associating to the pencil the only reducible curve such that the statement in \((1)\) (resp. in \((2)\)) does not hold true. We conclude that the statement in \((1)\) and \((2)\) hold true on all the three reducible curves of the generic pencil \(\tau\), and the Lemma is proved.
\end{proof}

\begin{prop}\label{prop.D.T} Let $D_1$ be the divisor introduced in Definition \ref{def.D} and $\mathcal T_1$ as in Definition \ref{def.curves1}. Then $D_1\cdot \mathcal T_1=72$.
\end{prop}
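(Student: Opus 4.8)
The plan is to reduce the intersection number to an enumerative count over the pencil $\tau$ and then to evaluate that count by intersection theory on the universal curve. Since $\mathcal T_1$ is, by Definition \ref{def.curves1}, the section $t\mapsto[\cO_{C_t}(2(q_1+q_2+q_3+q_4))]$ of the Lagrangian fibration $p\colon\M_{v_1}\to|2H|$ restricted to the pencil $\tau$, and since by Remark \ref{rem.D.schematic} a sheaf $[L]$ supported on a smooth curve $C$ lies in $D_1$ if and only if $h^0(C,L(-4r))>0$ for some $r\in C\cap\rho_0$, the points of $\mathcal T_1\cap D_1$ correspond to those $t$ for which $\cO_{C_t}(2(q_1+q_2+q_3+q_4)-4r)$ is effective for some $r\in C_t\cap\rho_0$. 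First I would invoke Lemma \ref{lemma.D.T.liscia} and Lemma \ref{lem.checks.D.T}: for a generic choice of $S$ and of the pencil $\tau$ they guarantee that no intersection point is supported on one of the three reducible members of $\tau$, that every relevant $h^0$ equals $1$, and that $D_1$ is locally a union of smooth branches meeting the section $\mathcal T_1$ transversally. This identifies $D_1\cdot\mathcal T_1$ with the cardinality of the set of pairs $(t,r)$, with $t\in\tau$, $r\in C_t\cap\rho_0$, such that $\cO_{C_t}(2(q_1+q_2+q_3+q_4)-4r)$ is effective; equivalently, since $r\mapsto t(r)$ (the unique member of $\tau$ through $r$) realises $\rho_0$ as a $4{:}1$ cover of $\tau$, with the number of $r\in\rho_0$ for which $\cO_{C_{t(r)}}(2(q_1+q_2+q_3+q_4)-4r)$ is effective.

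For the count itself I would mimic the theta-divisor computation of Proposition \ref{prop.D.Gamma}, now carried out in families. On a fixed smooth fibre the restriction of $D_1$ to $p^{-1}(C_t)\cong J^8(C_t)$ is $\sum_{r\in C_t\cap\rho_0}(4r+\theta)$, a sum of four theta translates, so its fibrewise class is $4\theta$; hence $D_1\cdot\mathcal T_1$ is the degree of the pull-back of the relative theta divisor along the section $\mathcal T_1$. I would evaluate this pull-back by Grothendieck--Riemann--Roch on the universal curve $\mathscr C\cong\tilde S:=Bl_{q_1,\dots,q_8}S$. Equivalently, since a degree-$4$ class on the genus-$5$ curve $C_t$ has Euler characteristic $0$, effectivity of $\cO_{C_t}(2(q_1+\dots+q_4)-4r)$ is the condition that $r$ be a ramification point (contact $\ge 4$) of the relative linear system $|\mathcal M|$ attached to $\mathcal M=\cO_{\mathscr C}(2(E_1+E_2+E_3+E_4))$ of relative degree $8$; the count then becomes the intersection in $\tilde S$ of the strict transform of $\rho_0$ (whose class is $\tilde H$, as $\rho_0$ avoids the eight base points) with the relative Wronskian divisor $\mathcal R=4\,c_1(\mathcal M)+6\,c_1(\omega_\pi)-\pi^*c_1(\pi_*\mathcal M)$ of the resulting $g^3_8$. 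All ingredients are explicit on $\tilde S$: the fibre class is $F=2\tilde H-\sum_{i=1}^8 E_i$, the relative dualising class is $\omega_\pi=\sum_{i=1}^8E_i+2F$, and $\deg_\tau c_1(\pi_*\mathcal M)$ is computed by Grothendieck--Riemann--Roch on $\pi\colon\tilde S\to\tau$, so that the numbers $\tilde H^2=2$, $E_i^2=-1$, $\tilde H\cdot E_i=0$ and $\tilde H\cdot F=4$ reduce everything to arithmetic.

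The delicate point, and the step I expect to be the main obstacle, is the bookkeeping of multiplicities. The raw Wronskian (or theta) computation weights each contributing $r$ by its fibrewise ramification weight, whereas $D_1\cdot\mathcal T_1$ weights it by the intersection multiplicity of the transversal section $\mathcal T_1$ with the branches of $D_1$ inside $\M_{v_1}$; these are measured in genuinely different directions (moving the point $r$ on a fixed curve, versus moving the member $C_t$ of the pencil). I would therefore have to show that, away from the loci excluded by the two preliminary Lemmas, every contributing point has vanishing sequence exactly $(0,1,2,4)$, so that both weights are $1$, and to discard the spurious contributions concentrated along the exceptional curves $E_i$ and along the three reducible fibres, which the transversal section does not meet. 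It is precisely this careful separation of the honest intersection points from the degenerate contributions, justified by Lemma \ref{lemma.D.T.liscia} and Lemma \ref{lem.checks.D.T}, that is meant to yield the stated value $D_1\cdot\mathcal T_1=72$.
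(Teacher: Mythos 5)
Your reduction of $D_1\cdot\cT_1$ to a count over $r\in\rho_0$ (realised as a $4{:}1$ cover of $\tau$) of the points where $h^0\bigl(C_{t(r)},\cO_{C_{t(r)}}(2(q_1+\dots+q_4)-4r)\bigr)>0$, controlled by Lemma \ref{lemma.D.T.liscia} and Lemma \ref{lem.checks.D.T}, is essentially the paper's Part 1. The computational core, however, is genuinely different: the paper works on the fibre product $X=\rho\times_\tau\tilde S$ and applies Grothendieck--Riemann--Roch to a single line bundle $L$ built so that $\hat q_*L=0$ and so that the reducible fibres are already corrected for by the classes $B_1,B_2,B_3$, whereas you intersect $\rho_0$ with a relative Wronskian divisor on $\tilde S$ --- and it is here that there is a genuine gap. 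Carrying out your recipe literally, with $\mathcal M=\cO_{\tilde S}(2(E_1+\dots+E_4))$, fibre class $F=2\tilde H-\sum_{i=1}^8E_i$ and $\omega_\pi=\sum_{i=1}^8E_i+2F$, one finds $4c_1(\mathcal M)\cdot\tilde H=0$, $6\,\omega_\pi\cdot\tilde H=48$, and Grothendieck--Riemann--Roch gives $\deg\pi_!\mathcal M=-10$, hence $-\pi^*c_1(\pi_!\mathcal M)\cdot\tilde H=40$: the class you wrote down evaluates to $88$ on $\rho_0$, not $72$.

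The missing $16$ is precisely the content you defer to ``bookkeeping'', and neither of the two required corrections is quantified in your proposal. First, $c_1(\pi_*\mathcal M)\neq c_1(\pi_!\mathcal M)$: GRR computes $\pi_!$, and the difference is the torsion sheaf $R^1\pi_*\mathcal M$, supported on the members $C_t$ with $\cO_{C_t}(2(q_1+\dots+q_4))\cong\omega_{C_t}$; along such fibres $h^0$ jumps to $5$, the determinant of the jet evaluation vanishes identically, and each such fibre meets $\rho_0$ in four points, so its length enters the count with coefficient $4$. Your statement that $\deg_\tau c_1(\pi_*\mathcal M)$ ``is computed by Grothendieck--Riemann--Roch'' conflates $\pi_*$ with $\pi_!$ and hides exactly this term. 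Second, the three reducible members of $\tau$ lie \emph{entirely} in the degeneracy locus --- for any smooth $r$ in such a fibre one glues the zero section on the component containing $r$ with a section on the other component vanishing at the two nodes, exactly as in Remark \ref{rem.D1eD1'} --- yet by Lemma \ref{lem.checks.D.T}.(1) they contribute nothing to $D_1\cdot\cT_1$; with multiplicity one this removes $3\times4=12$ from the excess (this is what the paper's classes $B_i$ are engineered to do), leaving $4$ to be supplied by the $R^1\pi_*\mathcal M$ correction. Until you compute the length of $R^1\pi_*\mathcal M$ and the multiplicities of the reducible fibres in the determinantal scheme and show that these corrections total $16$ (and, as you note, that every surviving point has Wronskian weight equal to its intersection multiplicity), the method does not produce the stated value: as written, it outputs $88$.
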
  
\begin{proof}
We divide the proof in two parts: in the first one, we will give a reformulation of the intersection, in order to arrive to a formula that is easier to compute; in the second one, we will proceed with the actual computation.
\\

\underline{\textit{Part 1: Reformulation of the intersection.}} First, notice that $\cT_1\subseteq\M_{v_1}^s$: on the three singular curves in $\tau$, which are union of two smooth irreducible components, the corresponding sheaf in $\cT_1$ is such that its restriction on both components has degree 4, which is a stable configuration. It follows that the family $\cE_{\cT_1}$ of Definition \ref{def.D} defines an inclusion $j:\tau\hookrightarrow \M_{v_1}^s$. We need to compute $D_1\cdot \cT_1=\deg(j^* D_1)$.

We start from $D'_1\cdot \cT_1=\deg(j^* D'_1)$. We will use the notation of Remark \ref{rem.D.schematic}, we can work on the stable locus $\M_{v_1}^s$ since it contains $D'_1\cap\cT_1$. Given the projection $p_{\M^s_{v_1}}:I_{\rho\times\M^s_{v_1}}\rightarrow\M^s_{v_1}$, consider the induced commutative diagram:
\[
\begin{tikzcd}
I_{\rho\times\M^s_{v_1}} \arrow{r}{p_{\M^s_{v_1}}}  & \M^s_{v_1} \\
I_{\rho\times\tau} \arrow{r}{p_\tau}\arrow[hook, u, "\hat j"] & \tau \arrow[hook, u, "j"]
\end{tikzcd}
\]
Observe that, by construction, $I_{\rho\times\tau}$ is the following incidence variety:
\[
I_{\rho\times\tau}=\{(r,t)\in \rho\times\tau|\ \nu(r)\in C_t\}
\]
where $C_t\subset S$ is the curve corresponding to $t\in \tau$; as usual, $\nu:\rho\rightarrow \rho_0$ is the normalization of the rational curve $\rho\in|H|$. Also, the incidence variety $I_{\rho\times\tau}$ is isomorphic to the curve $\rho$: an isomorphism is given by 
\begin{align*}
\rho&\rightarrow I_{\rho\times\tau} \\
r&\mapsto(r,t_r)
\end{align*}
where $t_r\in \tau$ is the point corresponding to the unique curve in $\tau$ passing through $\nu(r)\in \rho_0$; this curve is well defined for every $r\in\rho$, because the base points $q_1,...,q_8$ of $\tau$ do not belong to the rational curve $\rho_0$. In particular, $I_{\rho\times\tau}$ is a smooth variety. 

By definition of $D_1'$, following again the notation introduced in Remark \ref{rem.D.schematic} we have: 
\[
j^*\bigl(p_{\M^s_{v_1}}(D_1'')\bigl)=j^*D_1'.
\]
Observe that $j$ is a closed immersion, hence proper, and $p_\tau$ is a finite surjective morphism of non singular manifolds, hence flat. Under these hypotheses pushforward and pullback commute along cartesian squares, and we get:
\begin{align*}
D'_1\cdot\cT_1&=\deg(j^* D'_1)=\deg\bigl(j^*(p_{\M^s_{v_1}}(D_1''))\bigl)=\deg\bigl(p_\tau(\hat j^*D_1'')\bigl)=\deg(\hat j^*D_1'').
\end{align*}

We pass to $D_1\cdot\cT_1$. In Remark \ref{rem.D1eD1'} we have noticed that that $D_1\subsetneq D'_1$ and that the divisors coincide once restricted to $\cJ^8_{|2H|^{sm}}\subseteq\M_{v_1}$. Regarding the three sheaves of \(\cT_1\) supported on reducible curves, by Lemma \ref{lem.checks.D.T}.(1) they do not belong to \(D_1\).
We conclude that:
\[
D_1\cdot\cT_1=(D'_1\cdot\cT_1)|_{\cJ^8_{|2H|^{sm}}}=\deg(\hat j^*D_1'')|_{I_{\rho\times\tau}\smallsetminus A},
\]
where $A\subseteq I_{\rho\times\tau}$ is the set of points that are mapped via $p_\tau$ on a point in $\tau$ representing one of the singular curves $C_i\cup C_i'$, $i=1,2,3$. We call $U:=I_{\rho\times\tau}\smallsetminus A$.

We proceed with a reformulation of this last intersection.  In order to compute $\deg(\hat j^*D_1'')|_U$, we construct one more commutative diagram. Consider:
\begin{itemize}
\item the projection $p_{\rho\times\M^s_{v_1}}:I_{\rho\times S\times\M^s_{v_1}}\rightarrow I_{\rho\times\M^s_{v_1}}$;
\item the incidence variety $\mathscr C\subset \tau\times S$ defining $\cT_1$, as introduced right before Definition \ref{def.curves1}, and the projection $q:\mathscr C\rightarrow\tau$;
\item the incidence variety: $I_{\rho\times\tau\times S}=\{(r,t,x)\in \rho\times\tau\times S|\ \nu(r),x\in C_t\}$, where \(C_t\subseteq S\) is the curve given by \(t\in \tau\);
\item the natural projection $p_{\mathscr C}:I_{\rho\times\tau\times S}\rightarrow \mathscr C$,
which is a covering of degree 4.
\end{itemize} 
We get the following commutative diagram:
\begin{equation}\label{diag.incidental}
\begin{tikzcd}
I_{\rho\times S\times\M^s_{v_1}} \arrow{r}{p_{\rho\times\M^s_{v_1}}}  & I_{\rho\times\M^s_{v_1}} \\
I_{\rho\times\tau\times S} \arrow{r}{p_{\rho\times\tau}}\arrow[hook, u, "\tilde j"]\arrow{d}{p_{\mathscr C}} & I_{\rho\times\tau} \arrow[hook, u, "\hat j"]\arrow{d}{p_\tau} \\
\mathscr C \arrow{r}{q} & \tau
\end{tikzcd}
\end{equation}

By definition: 
\[
D_1''=\Supp\bigl(R^1p_{\rho\times\M^s_{v_1},*}\hat\cF\otimes\cI_{R}^{\otimes 4}\bigl)
\] 
where $\hat\cF$ is the pullback on $\rho\times S\times\M^s_{v_1}$ of the quasi-universal family $\cF\in\Coh(S\times\M^s_{v_1})$ that we fixed to define the schematic structure of $D_1$ in Remark \ref{rem.D.schematic}. We get:
\begin{align*}
\deg(\hat j^*D_1'')|_U=\deg(\hat j^*\Supp\bigl(R^1p_{\rho\times\M_{v_1},*}\hat \cF\otimes\cI_{R}^{\otimes 4}\bigl))|_U=\deg\bigl(\hat j^*R^1p_{\rho\times\M_{v_1},*}\hat \cF\otimes\cI_{R}^{\otimes 4}\bigl)|_U
\end{align*}
where the last equality follows from Lemma \ref{lemma.D.T.liscia}: $\hat j^*R^1p_{\rho\times\M_{v_1},*}\hat \cF\otimes\cI_{R}^{\otimes 4}|_U$ is a sheaf on $U\subseteq I_{\rho\times\tau}$ with fibers of rank 0 or 1, and its degree equals the length of its support; notice that Lemma \ref{lemma.D.T.liscia} regards the divisor $D_1$, but we can apply it to $D_1'|_{\cJ^8_{|2H|^{sm}}}$ since $D_1|_{\cJ^8_{|2H|^{sm}}}=D_1'|_{\cJ^8_{|2H|^{sm}}}$. From the commutative diagram in \eqref{diag.incidental} it follows that:
\begin{align*}
\deg(\hat j^*D_1'')|_U&=\deg\bigl(\hat j^*R^1p_{\rho\times\M_{v_1},*}\hat \cF\otimes\cI_{R}^{\otimes 4}\bigl)|_U=\deg\bigl(R^1 p_{\rho\times\tau,*}\tilde j^*(\hat\cF\otimes\cI_{R}^{\otimes 4})\bigl)|_U \\
&=\deg\Bigl(R^1p_{\rho\times\tau,*}\bigl(p_{\mathscr C}^*\cE_\cT\otimes\widetilde j^*\cI_{R}^{\otimes 4}\bigl)\Bigl)|_U.
\end{align*}
where the second equality follows from the base change and the fact that we are working with the top cohomology, and the last one by the fact that $\hat\cF$ is the pull-back of a quasi-universal family, then the sheaves $\widetilde j^*\hat\cF$ and $p^*_{\mathscr C}\cE_\cT$ on $I_{\rho\times\tau\times S}$ are isomorphic up to multiplication with a line bundle, which does not affect the degree computation we are doing since $I_{\rho\times\tau}\cong\rho$ is smooth. 

Summarizing, and calling $\hat R:=\tilde j^*R\subset I_{\rho\times\tau\times S}$:
\begin{equation}\label{eq.D.T}
D_1\cdot\cT_1=\deg\Bigl(R^1p_{\rho\times\tau,*}\bigl(p_{\mathscr C}^*\cE_\cT\otimes\cI_{\hat R}^{\otimes 4}\bigl)\Bigl)|_U.
\end{equation}

We proceed analyzing the terms in \eqref{eq.D.T}. As already  noticed in the proof of Proposition \ref{prop.chern}, the universal curve $\mathscr C\subset S\times \tau$ is isomorphic to $\tilde S:=Bl_{q_1,...,q_8} S$, where $q_1,...,q_8$ are the base points of $\tau$. Under this isomorphism, the universal family $\cE_{\cT_1}$ correspond to the following sheaf:
\[
\mathcal O_{\tilde S}\bigl(2(E_1+E_2+E_3+E_4)\bigl)\in \Coh(\tilde S),
\] 
where $E_1,...,E_4\subset\tilde S$ are the exceptional divisors over the points $q_1,...,q_4$ respectively.  In this setting the projection morphism
\[
q:\tilde S\cong \mathscr C\rightarrow\tau
\] 
is the morphism that at any point $s\in\tilde S$ associates the point in $\tau$ corresponding to the unique curve parametrized by $\tau$ that is individuated by the points $s,q_1,..,q_8$.

We rewrite the lower part of the diagram \eqref{diag.incidental} as follows, accordingly with the isomorphisms described: 
\[
\begin{tikzcd}
\textbf{X}:=I_{\rho\times\tau\times S} \arrow{r}{p_{\mathscr C}=:\mathbf{\hat f}} \arrow{d}[swap]{p_{\rho\times\tau}=:\mathbf{\hat q}} & \tilde S \arrow{d}{q} \\
\rho \arrow{r}{p_\tau=:\mathbf{f}} & \tau
\end{tikzcd}
\]
In the diagram above we have renamed the incidence variety $I_{\rho\times\tau\times S}$ and the map involved in the diagram, in order to simplify our notation in the computations that will come. Notice that, by definition, the divisor $\hat R\subset X$ is a section of $\hat q$. Observe that the map $q$ is flat because equidimensional surjective morphism of smooth manifolds, and that $f$ is a 4:1 flat covering, as already observed.

First, by \eqref{eq.D.T} we are interested in the sheaf $p^*_{\mathscr C}\cE_\cT\otimes\cI_{\hat R}^{\otimes 4}\in\Coh(I_{\rho\times\tau\times S})$, which corresponds under the described isomorphisms to the sheaf 
\[\hat f^*\mathcal O_{\tilde S}\bigl(2(E_1+E_2+E_3+E_4)\bigl)\otimes \cI_{\hat R}^{\otimes 4}\in\Coh(X).
\]
Observe that $X$ is smooth: $X$ is a fiber product of smooth manifolds, hence a point in $X$ is singular if the morphisms $f$ and $q$ are not smooth at that point. The morphism $q$ is not smooth on points lying on a singular curve of the pencil $\tau$, while $f$ is not smooth on ramifications points, that are the points of $\rho$ where the image of $\rho$ in $S$ is tangent to a curve in $\tau$. We conclude that $X$ is smooth if the singular conics in $\tau$ are not tangent to the rational curve $\rho_0$, which is true thanks to conditions \ref{word:condition.a} and \ref{word:condition.b} on the rational curve $\rho_0$. As consequence $\hat R$ is a Cartier divisor in $X$, and $\cI_{\hat R}^{\otimes 4}\cong \cO_X(-4\hat R)$.

Set $\hat E_i:=\hat f^*E_i$ for $i=1,...,8$, and define:
\[
L':=\mathcal O_X\bigl(2(\hat E_1+\hat E_2+\hat E_3+\hat E_4)-4\hat R\bigl)\in \Pic(X).
\]

Combining everything together, we rewrite the intersection in \eqref{eq.D.T} as follows:
\[
D_1\cdot\mathcal T_1=\deg(R^1\hat q_*L')|_U.
\]  
To compute $\deg(R^1\hat q_*L')|_U$, we are going to define a new family $L\in \Pic(X)$.
We call:
\[
\{r_i,s_i\}=C_i\cap\rho_0,\ \{x_i,y_i\}=C'_i\cap\rho_0,\ \ i=1,2,3
\]
so that $A=\{r_i,s_i,x_i,y_i\}_{i=1}^3\subseteq \rho\cong I_{\rho\times\tau}$. 

Let $\widetilde C_i,\widetilde C'_i\subset \tilde S$ be the strict transforms of $C_i,C'_i$ via the blow-up map $\tilde S\rightarrow S$, $i=1,2,3$. For any $i=1,2,3$, we consider the following curves:
\[
\{x_i\}\times \widetilde C_i,\ \{y_i\}\times \widetilde C_i,\ \{r_i\}\times \widetilde C'_i,\ \{s_i\}\times \widetilde C'_i\subseteq X.
\]
and we call:
\[
B_i:=\{x_i\}\times \widetilde C_i+\{y_i\}\times \widetilde C_i+ \{r_i\}\times \widetilde C'_i+ \{s_i\}\times \widetilde C'_i\in \Pic(X).
\]
Finally, we define:
\[
L:=\cO_X(2(\hat E_1+\hat E_2+\hat E_3+\hat E_4)+B_1+B_2+B_3-4\hat R)\in\Pic(X).
\]
Observe that $L|_U\cong L'|_U$. 
Take $p\in A$, and assume that $p\in C_1\cup C'_1$, e.g.\ $p=r_1\in C_1$ (the case of the other three intersection points is analogue); also, up to renaming the base points of \(\tau\), we can assume that $q_1,q_2\in C_1$ and $q_3,q_4\in C'_1$. Then: 
\[
L|_{\hat q^{-1}(r_1)}=L|_{\{r_1\}\times_\tau S}=L|_{\{r_1\}\times C_1\cup C'_1}\cong L''
\] 
where  $L''$ is the following line bundle on \(C_1\cup C_1'\):
\begin{equation}\label{eq.L''}
L''=\cO_{C_1\cup C_1'}(2(q_1+q_2)-4r+\alpha+2(q_3+q_4)+\beta-(q_3+q_4+\iota(q_3)+\iota(q_4)))
\end{equation}
where \(\alpha\in \mathfrak g^1_2\) on \(C_1\) and \(\beta\in \mathfrak g^1_2\) on \(C_1'\), and \(\iota\) is the involution on the \(\K3\) surface introduced in Remark \ref{rem.conics}. Notice that for any \(p\in A\) on \(C_i\cup C_i'\), \(i=1,2,3\) the restriction \(L|_{\hat q^{-1}(p)}\) is a line bundle written analogously to \(L''\), for a suitable choice of the base points of the pencil.   For a generic choice of the pencil \(\tau\) and of the \(\K3\) surface \(S\) such line bundles have no sections on \(C_i\cup C_i'\), as stated in Lemma \ref{lem.checks.D.T}. We deduce that $\deg(R^1\hat q_* L)=\deg(R^1\hat q_* L')|_U$, and then:
\[
\deg(R^1\hat q_* L)=D_1\cdot \cT_1.
\] 
We will devote the second part of the proof to the computation of $\deg(R^1\hat q_* L)$.
\\

\underline{\textit{Part 2: Computation of the intersection.}} We want to compute $c_1(R^1\hat q_*L)$. In the Grothendieck group $K(\rho)$ one has: 
\[
\hat q_!L=\hat q_*L -R^1\hat q_* L.
\] 
Notice that $\hat q_*L=0$ because it is a torsion free sheaf (as push forward of a line bundle) and with generic fiber equal to 0; it follows that:
\[
\ch(R^1\hat q_*L)=-\ch(\hat q_! L).
\]

We compute $\ch(\hat q_! L)$ using the Grothendieck-Riemann-Roch theorem on $\hat q:X\rightarrow \rho$ (we have already observed that $X$ is smooth): 
\begin{equation}\label{eq.chL}
\ch(\hat q_! L)=\hat q_*\bigl(\ch(L)\td(X)\bigl)\td(\rho)^{-1}.
\end{equation}

In what follows, we will compute the factors of equation \eqref{eq.chL}.
\begin{enumerate}
\item $\td(\rho)=\Bigl(1,\frac{c_1(\omega_\rho^\vee)}{2}\Bigl)=(1,1)$ since $\rho\cong\PP^1$; it follows that $\td(\rho)^{-1}=(1,-1)$.
\item $\td(X)=\td(T_X)=\Bigl(1,\frac{c_1}{2}, \frac{c_1^2-c_2}{12} \Bigl)$, where $c_i:=c_i(\omega^\vee_X)$ for $i=1,2$. Because of the short exact sequences
\[
0\rightarrow \hat q^*\Omega^1_{\rho}\rightarrow \Omega^1_X \rightarrow \Omega^1_{\rho / X}\cong \hat f^*\Omega^1_{\tau/ S}\rightarrow 0 
\]
and
\[
0\rightarrow q^*\Omega^1_\tau \rightarrow \Omega^1_{\tilde S}\rightarrow \Omega^1_{\tau/\tilde S}\rightarrow 0
\]
one has 
\begin{align*}
c_1(\omega_X^\vee)&=\hat q^* c_1(\omega_{\rho}^\vee)+\hat f^*[c_1(\omega^\vee_{\tilde S})+q^*c_1(\omega_\tau)] \\
&=2\xi+\hat f^*[-E_1-...-E_8-2\tilde C_t]= 2\xi+[-\hat E_1-...-\hat E_8]-8\xi \\
&=-6\xi+[-\hat E_1-...-\hat E_8].
\end{align*}
In the above computations $\xi$ is the class of a fiber of $X\rightarrow \rho$ and $\tilde C_t=q^{-1}(t)$ for $t\in\tau$,  and the intersections follow from: $\tilde S$ is the blow-up of a $\K3$ surfaces in 8 points, $\tau\cong\PP^1$ and $\hat f:X\rightarrow \tilde S$ is a covering of degree 4. Then: 
\[
c_1^2=(-6\xi)^2+12(\xi\cdot [\hat E_1]+...+\xi\cdot [\hat E_8])+[\hat E_1+...+\hat E_8]^2.
\] 
Because of the following intersections:
\begin{itemize}
\item $\xi^2=0$ since $\xi$ is a fiber;
\item $\xi\cdot \hat E_i=1$ because it is a product of a fiber and a section;
\item $\hat E_i\cdot \hat E_j=4 E_i\cdot E_j=-4\delta_{ij}$
\end{itemize}
we get 
\[
c_1^2=12\cdot 8-4\cdot 8=64.
\]

It remains to compute $c_2(T_X)$. We recall that, for a smooth projective complex manifold of dimension $d$, one has  $c_d(T_X)=\chi_{top}(X)$; this is a combination of Hirzebruch-Riemann-Roch theorem, Borel-Serre identity and the Hodge decomposition theorem. Given a ramified covering $\phi:X\rightarrow Y$ of degree $d$ with branch locus $B_\phi$ and ramification locus $R_\phi$, one has $\chi(X\setminus R_\phi)=d\cdot\chi(Y\setminus B_\phi)$, which implies 
\[
\chi(X)=d\cdot\chi(Y)+\chi(R_\phi)-d\cdot\chi(B_\phi).
\] 
In our case: $X$ is a covering of degree 4 of a blow-up of a $\K3$ surface in 8 points; the branch locus of $f:\rho\rightarrow \tau$ are the points of $\tau$ parametrizing  curves of the pencil which are tangent in $S$ to the rational curve $\rho_0$; translating the problem in $\PP^2$ via the morphism $S\rightarrow \PP^2$ associated to the linear system $|H|$ (cf. Remark \ref{rem.conics}), we need to count how many conics in a pencil are tangent to a line. This happens for 2 conics, which correspond to 2 curves on $\PP^2$ and then 2 curves of $|2H|$ pulled-back on $\hat S$; it follows that the branch locus $B_{\hat f}$ of $\hat f$ corresponds to  2 curves of genus 5. Regarding the ramification locus of $\hat f$, observe the following: each conic in $\PP^2$ tangent to the line has one point of intersection with the line, which corresponds on $S$ to 2 points of tangency among the curve of $|2H|$ and the rational curve $\rho_0$; this happens for 2 branch curves parametrized by $\tau$, which means that the ramification locus $R_f$ of $f$ consists of 4 points, and then the ramification locus $R_{\hat f}$ of $\hat f$ consists of 4 curves of genus 5. Then:
\begin{align*}
\chi(X)&=4\chi(\tilde S)+\chi(R_{\hat f})-4\chi(B_{\hat f}) \\
&=4\chi(\tilde S)-4\chi(\alpha)
\end{align*}
where $\alpha$ is a curve of genus 5. Observe that:
\begin{itemize}
\item $\chi(\tilde S)=\chi(S)+8(\chi(\PP^1)-\chi(pt))=24+8\cdot 1=32$ since $\tilde S$ is the blow up of $S$ along 8 points;
\item $\chi(\alpha)=1-10+1=-8$.
\end{itemize}
It follows  $\chi(X)=4\cdot 32+32=5\cdot 32$.
Summing up, we obtain
\[
\td(X)=\Bigl(1,\frac{-6\xi+[-\hat E_1-...-\hat E_8]}{2},-8 \Bigl).
\]
\item $\ch(L)=\Bigl(1,c_1(L),\frac{c_1(L)^2}{2}\Bigl) $,  with: 
\[c_1(L)=2(\hat E_1+\hat E_2+\hat E_3+\hat E_4)+B_1+B_2+B_3 -4\hat R,\]
\begin{align*}
c_1(L)^2&=4\Bigl(\sum_{i=1}^4\hat E_i\Bigl)^2+16\hat R^2+\Bigl(\sum_{i=1}^3B_i\Bigl)^2+4\Bigl(\sum_{i=1}^4\hat E_i\Bigl)\Bigl(\sum_{i=1}^3B_i\Bigl)\\
&-16\hat R\Bigl(\sum_{i=1}^4\hat E_i\Bigl)^2-8\hat R\Bigl(\sum_{i=1}^3B_i\Bigl).
\end{align*}
We have:
\begin{itemize}
\item $\hat R^2=-6$. Indeed, by the adjunction formula: 
\[
\hat R^2=\deg (K_{\hat R})-\deg( K_X|_{\hat R}).
\] 
Notice that $\deg(K_{\hat R})=-2$ because $\hat R\cong\PP^1$. On the other hand 
\[
K_X=\hat f^* K_{\tilde S}+R_{\hat f}
\] 
and  
\begin{align*}
\deg(\hat f^*K_{\tilde S}|_{\hat R})&=\hat f^*K_{\tilde S}\cdot \hat R=K_{\tilde S}\cdot \hat f_* \hat R \\
&=(K_S+ \sum E_i)\cdot \hat f_* \hat R.
\end{align*}
Since $S$ is a $\K3$, $K_S=0$. Furthermore, $\hat f_*\hat R=\rho_0$: by the universal property of the fiber product, $\hat f\circ \sigma$ is the inclusion of $\nu(\rho)=\rho_0$ in $\tilde S$. The curve $\rho_0$  does not intersect $E_1,...,E_8$, since $\rho_0$ does not pass through $\{q_1,...,q_8\}$; it follows $(K_S+ \sum E_i)\cdot \hat f_* \hat R=0$. 

Finally, $R_{\hat f}\cdot \hat R=4$, since the map $f$ has 4 ramification points, as noticed before. It follows $\hat R^2=-6$. 
\item $\hat R\cdot\hat E_i=\hat f_* \hat R\cdot E_i=0$ for all $i=1,...,8$.
\item $(B_1+B_2+B_3)^2=B_1^2+B_2^2+B_3^2=-24$: from $(\{r_1\}\times(\widetilde C_1\cup \widetilde C'_1))^2=0$ and  $(\{r_1\}\times\widetilde C_1)(\{r_1\}\times \widetilde C'_1)=2$ it follows \((\{r_1\}\times\widetilde C_1')^2=-2\), and similarly for the other points in $A$.
\item $(\hat E_1+\hat E_2+\hat E_3+\hat E_4)(B_1+B_2+B_3)=(E_1+E_2+E_3+E_4)\hat f_*(B_1+B_2+B_3)=(E_1+E_2+E_3)(2\sum_{i=1}^3\widetilde C_i\cup \widetilde C'_i)=24$.
\item $\hat R\cdot (B_1+B_2+B_3)=0$, since $\hat R=\{(p,p)|\ p\in\rho\}\subseteq X$ while in the definition of $B_i$ one has $r_i,s_i\notin \widetilde C'_i$ and $x_i,y_i\notin \widetilde C_i$.
\end{itemize}
We get:
\[
\ch(L)=\Bigl(1,2[\hat E_1+\hat E_2+\hat E_3+\hat E_4]+[B_1+B_2+B_3]-4[\hat R], -44 \Bigl).
\]
\end{enumerate}
Combining all the above computations with equation \eqref{eq.chL}, we finally get:
\begin{align*}
ch&(\hat q_! L)= \hat q_*\bigl(1, td_1(X)+ch_1(L), -8+td_1(X)\cdot ch_1(L)-44\bigl)\bigl(1, -1\bigl) \\
&=\hat q_*\Bigl(1,-3\xi- \frac{1}{2}\sum_{i=1}^8[\hat E_i]+2\sum_{i=1}^4[\hat E_i]+\sum_{i=1}^3[B_i] -4[\hat R], -72\Bigl)(1,-1) 
\end{align*}
where we used that $\xi\cdot \hat R=1$, since $\xi$ is a fiber and $\hat R$ is a section, and that $\xi\cdot B_i=0$, since $\xi$ is generic. Using that $\hat q_*(\xi)=\hat q_*(B_i)=0$ and $\hat q_*(\hat E_i)=\hat q_* (\hat R)=1$, we get: 
\[
\ch(\hat q_! L)=(0,-72)(1,-1)=(0,-72)
\]
i.e.\ $\ch(R^1\hat q_*L)=(0,72)$ and $D_1\cdot\mathcal T=72$.
\end{proof}

Proposition \ref{prop.D.Gamma} and Proposition \ref{prop.D.T} answer to Step $(3)$ of Strategy \ref{strategy} for what concerns the divisor $D_1$. We are ready to compute the square of $D_1$, with respect to the Beauville-Bogomolov-Fujiki form $q_{10}$.

\begin{thm}\label{thm.q(D1*)} Let $D_1\subset \M_{v_1}$ be the divisor defined in Definition \ref{def.D} and $D_1^*:=\tilde \pi_1^*D_1$, where $\tilde\pi_1:\widetilde\M_{v_1}\rightarrow \M_{v_1}$ is the symplectic desingularization. Then the class of $D_1$ in $H^2(\M_{v_1},\ZZ)$ is:
\[
D_1=-4\lambda_{v_1}(e)+52\lambda_{v_1}(f)
\]
where $e,f\in (v_1)^\perp$ are the classes defined in \eqref{eq.gen.vperp1}. Furthermore:
\[
q_{10}(D_1)=q_{10}(D_1^*)=448>0,
\] 
where $q_{10}$ is the Beauville-Bogomolov-Fujiki quadratic form on $\tilde \M_{v_1}$.
\end{thm}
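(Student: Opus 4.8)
The plan is to carry out steps (3) and (4) of Strategy \ref{strategy}: having already assembled all the relevant intersection numbers, what remains is to solve a $2\times 2$ linear system for the coordinates of $D_1$ in the basis $\{\lambda_{v_1}(e),\lambda_{v_1}(f)\}$, and then to read off the Mukai square by a one-line lattice computation.

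First I would record that $D_1$ is a Cartier divisor, as established above from the local factoriality of $\M_{v_1}$ via Theorem \ref{thm.factorial}, so that $D_1\in\Pic(\M_{v_1})$. By Corollary \ref{cor.isom.hodge} the map $\lambda_{v_1}\colon (v_1^\perp)^{1,1}\xrightarrow{\sim}\Pic(\M_{v_1})$ is an isomorphism, hence there are integers $a,b$ with $D_1=a\,\lambda_{v_1}(e)+b\,\lambda_{v_1}(f)$.

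Next I would intersect this expression with the two curves $\Gamma_1$ and $\cT_1$. Combining Proposition \ref{prop.chern} with Proposition \ref{prop.D.Gamma} and Proposition \ref{prop.D.T} gives the system
\begin{align*}
-5a &= D_1\cdot\Gamma_1=20, \\
-5a+b &= D_1\cdot\cT_1=72.
\end{align*}
The matrix of coefficients $\left(\begin{smallmatrix}-5 & 0\\ -5 & 1\end{smallmatrix}\right)$ has determinant $-5\neq 0$ — this is exactly the numerical independence of $\Gamma_1$ and $\cT_1$ requested in Strategy \ref{strategy}.(1) — so the system has the unique solution $a=-4$, $b=52$, yielding the stated class $D_1=-4\lambda_{v_1}(e)+52\lambda_{v_1}(f)$.

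Finally, for the square I would invoke the identity $q_{10}(\tilde\pi_1^*D_1)=q_{10}(D_1)=(\lambda_{v_1}^{-1}D_1)^2$ recorded in Subsection \ref{subsection:ampleness} (a consequence of Theorem \ref{thm.desing.Mv}.(2)), where the square on the right is taken with respect to the Mukai pairing. Here $\lambda_{v_1}^{-1}D_1=-4e+52f=(-4,-4h,52)$, and since the Mukai pairing of $(r,l,s)$ with itself equals $l^2-2rs$, substituting $l^2=16h^2=32$ and $-2rs=-2(-4)(52)=416$ gives $q_{10}(D_1)=32+416=448>0$. I do not expect any genuine obstacle at this stage: the whole argument is a direct assembly of previously established intersection numbers together with an elementary computation in the Mukai lattice, all the difficulty having been concentrated in Proposition \ref{prop.D.T} and the technical Lemmas \ref{lemma.D.T.liscia} and \ref{lem.checks.D.T} it relies on. The only point deserving a moment of care is to confirm that the chosen pair of curves yields an invertible intersection matrix, which the determinant check above guarantees.
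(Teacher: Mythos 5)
Your proposal is correct and follows essentially the same route as the paper: write $D_1=a\lambda_{v_1}(e)+b\lambda_{v_1}(f)$, solve the $2\times2$ system coming from Propositions \ref{prop.chern}, \ref{prop.D.Gamma} and \ref{prop.D.T}, and compute the Mukai square of $(-4,-4h,52)$, which indeed gives $448$ (the paper's displayed ``$-4e+54f$'' is a typo for $-4e+52f$, as your computation confirms).
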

\begin{proof}
This is a straightforward consequence of the computations done along the previous sections. Indeed, write: 
\[
D_1=a\lambda_{v_1}(e)+b\lambda_{v_1}(f)
\] 
with $e$ and $f$ as in \eqref{eq.gen.vperp1}. Intersecting $D_1$ with $\Gamma_1$ and \(\cT_1\) and  combining Proposition \ref{prop.chern} and Proposition \ref{prop.D.Gamma}, we get:
\begin{equation}\label{eq.D.in.vperp}
D_1=-4\lambda_{v_1}(e)+52\lambda_{v_1}(f). 
\end{equation}

It follows :
\[
q_{10}(D_1^*)=q_{10}(D_1)=(-4e+54 f)^2=448,
\]
since $e^2=2$, $<e,f>=-1$ and $f^2=0$ in the Mukai lattice $H^*(S,\ZZ)$.
\end{proof}

As observed in Remark \ref{rem.pullback.strict.trans}, when the multiplicity \(m\) of \(\Sigma_1\) in \(D_1\) is greater than 1, the divisor \(D_1^*\) is ruled by non reduced curves. We do not want this case to occur, because we would like to apply Corollary \ref{cor.defo.curves} afterwords. We start checking if $\Sigma_1$ is contained in $D_1$. 

\begin{prop}\label{lem.sing.locus.D1} The divisor $D_1$ does not contain the singular locus $\Sigma_1=\M_{v_1}\setminus \M_{v_1}^s$. It follows that $D_1^*=\widetilde D_1$, where \(\widetilde D_1\) is the strict transform of the divisor \(D_1\) via the symplectic desingularization \(\widetilde \pi_1:\widetilde\M_{v_1}\to\M_{v_1}\).
\end{prop}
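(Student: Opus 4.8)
The plan is to reduce the identity $D_1^*=\widetilde D_1$ to the non-containment $\Sigma_1\not\subseteq D_1$, and then to prove the latter by a dimension count. First recall that by Theorem \ref{thm.desing.Mv}.(1) the morphism $\tilde\pi_1$ is the blow-up of $\M_{v_1}$ along the (reduced) center $\Sigma_1$, that $\Sigma_1$ has codimension two in the ten-fold $\M_{v_1}$, and that $D_1$ is Cartier (as already checked). For a Cartier divisor the total transform and the strict transform differ only by $\operatorname{mult}_{\Sigma_1}(D_1)\cdot\widetilde\Sigma_1$; hence, once I know that $D_1$ does not contain the generic point of $\Sigma_1$, this multiplicity vanishes, $\tilde\pi_1^{-1}(D_1)$ acquires no exceptional component, and $D_1^*=\tilde\pi_1^*D_1=\widetilde D_1$. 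Everything thus reduces to $\Sigma_1\not\subseteq D_1$, and since $\Sigma_1$ is irreducible and $D_1$ is closed it suffices to show that $D_1\cap\Sigma_1$ has dimension strictly smaller than $\dim\Sigma_1$.

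I would then describe the two loci. The strictly semistable locus $\Sigma_1$ parametrizes sheaves $S$-equivalent to $F_1\oplus F_2$ with $F_i$ stable of Mukai vector $w=(0,h,2)$; each $F_i=(j_i)_*\cO_{C_i}(\delta_i)$ with $C_i\in|H|$ and $\delta_i$ of degree $3$ on the genus-two curve $C_i$, the degree being forced by $\mathfrak v(F_i)=w$. Thus the generic point of $\Sigma_1$ has $C_1\neq C_2$ general in $|H|$ and $\delta_i$ general in $\Pic^3(C_i)$, and $\Sigma_1$ is irreducible of dimension $2\dim|H|+\dim\Pic^3(C_1)+\dim\Pic^3(C_2)=8$. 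All these sheaves are supported on reducible curves, so $\Sigma_1\subseteq p^{-1}(\Sym^2|H|)$ for the Lagrangian projection $p\colon\M_{v_1}\to|2H|$.

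The core of the argument is to bound $\dim(D_1\cap\Sigma_1)$ fibrewise over $\Sym^2|H|$. Fix a general $C_0=C_1\cup C_2\in\Sym^2|H|$; then $p^{-1}(C_0)$ is the compactified Jacobian $\overline J^{8}(C_0)$ (dimension $5$), inside which the strictly semistable classes form $P\cong\Pic^3(C_1)\times\Pic^3(C_2)$ of dimension $4$. A sheaf $\cO_{C_0}(4r+p_1+\dots+p_4)$ produced by the ruling $\phi$ is semistable only in multidegree $(4,4)$ or $(5,3)$; the case $(4,4)$ is stable, while $(5,3)$ (exactly one $p_i$ on the component carrying $r$) is strictly semistable with Jordan--H\"older factors $\cO_{C_1}(4r+p_i-n_1-n_2)$ and $\cO_{C_2}(p_j+p_k+p_l)$, where $n_1,n_2=C_1\cap C_2$. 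In this ``direct'' case the factor on the component carrying $r$ is pinned: it sweeps out only the one-dimensional family $\{\cO_{C_1}(4r+q-n_1-n_2):r\in\rho_0\cap C_1,\ q\in C_1\}\subset\Pic^3(C_1)$, while the other factor is free. Running over $\Sym^2|H|$ this yields a locus of dimension $4+1+2=7$.

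It remains to control the sheaves of $\Sigma_1$ lying in $D_1$ only as limits of stable members of $D_1$ (supported on smooth curves, or on reducible curves in multidegree $(4,4)$). For this I would work in $\overline J^{8}(C_0)$ and identify $D_1\cap p^{-1}(C_0)$ with the flat limit, as $C\to C_0$, of the theta-type divisors $\sum_{r\in\rho_0\cap C}\bigl(4r+\Theta_C\bigr)$, and then prove that this limit divisor does not contain $P$ as a component—equivalently, that a general $[F_1\oplus F_2]\in P$ is not a limit of line bundles $\cO_{C}(4r+\mathrm{eff})$. I expect the same pinning mechanism: any such degeneration forces the factor on the component containing $\lim r$ to carry the divisor $4r_0$ up to a correction of degree $-1$, confining it to a curve in $\Pic^3(C_i)$. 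Making this precise—tracking the Jordan--H\"older factors of the semistable reduction of a degenerating family $\cO_{C_t}(4r_t+\sum p_{i,t})$, including the multidegree jumps where the limit line bundle is unstable and must be twisted at the nodes of $C_0$—is the main obstacle, and is where the theory of the compactified Jacobian of the nodal curve $C_0$ enters. Granting it, $\dim(D_1\cap\Sigma_1)\le 7<8$, hence $\Sigma_1\not\subseteq D_1$ and $D_1^*=\widetilde D_1$.
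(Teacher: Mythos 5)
Your reduction of the statement to $\Sigma_1\not\subseteq D_1$ is correct (and matches how the paper uses the non-containment), and your description of $\Sigma_1$ as the $8$-dimensional locus of polystable sheaves $F_1\oplus F_2$ with $\mathfrak v(F_i)=(0,h,2)$, together with the multidegree analysis of semistability on $C_1\cup C_2$, is accurate. But the proof has a genuine gap exactly where you flag it: the entire content of the proposition is to show that a \emph{general} point of $P\cong\Pic^3(C_1)\times\Pic^3(C_2)$ is not a limit of sheaves $i_*\cO_C(4r+p_1+\dots+p_4)$ with $C$ smooth, and you write ``Granting it'' at precisely that step. Since $D_1$ is by definition the closure of a locus in $\cJ^8_{|2H|^{sm}}$, the ``direct'' $7$-dimensional locus you compute is itself relevant only insofar as it lies in that closure, and the bound $\dim(D_1\cap\Sigma_1)\le 7$ is not established by anything you actually prove. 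The ``pinning mechanism'' is the right intuition, but an intuition about which Jordan--H\"older factors can appear in a flat limit is not a proof.

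For comparison, the paper closes this gap without invoking compactified Jacobians. It uses O'Grady's identification of the sheaves in the $S$-equivalence class $[F_1\oplus F_2]$ with simple semistable sheaves on $C_1\cup C_2$ (extensions of $F_2$ by $F_1$, or vice versa), fixes $r\in\rho_0\cap C_1$, and computes $h^0(S,F(-4r))$ for every such $F$ via the normalization sequence of \cite[Lemma 1.0.7]{rapagnettaOG10}: for $L_1,L_2$ general either $h^0=0$ (when $F$ has rank one at both nodes), which immediately excludes $F$ from the closure of $D_1\cap\cJ^8_{|2H|^{sm}}$, or $h^0=1$ with the unique section vanishing identically on $C_1$ (when $F$ has rank two at one node). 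In the second case a degenerating family $\cF_d\in D_1\cap\cJ^8_{|2H|^{sm}}$ would produce effective degree-$4$ cycles $\xi_d$ (the zero loci of the sections of $\cF_d(-4r_d)$) whose limit must be the zero locus of that unique section of $F(-4r)$ --- impossible, since that section vanishes on all of $C_1$. This is precisely the limit-tracking argument you defer, carried out concretely; to complete your proof you would need to supply an argument of this kind.
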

\begin{proof}
A general point $p\in \Sigma_1$ is an $S$-equivalence class of sheaves supported on a reducible curve $C=C_1\cup C_2$, with $C_i\in|H|$ smooth, $i=1,2$, and $C_1\cap C_2=\{n_1,n_2\}$, with $n_1\neq n_2$; it has polystable representative $F_1\oplus F_2$ with $F_1$ and $F_2$ non-isomorphic stable sheaves supported on $C_1$ and $C_2$ respectively, and with $\mathfrak v(F_1)=\mathfrak v(F_2)=\frac{v_1}{2}=(0,h,2)$ (see \cite{OG10} and the proof of Proposition 5.2 in \cite{lehnsorger}). As consequence, for $i=1,2$ the sheaf $F_i$ is push-forward of a (general) torsion-free sheaf of rank 1 on $C_i$, hence a line bundle $L_i$ because of the smoothness of $C_i$; also, $\deg(L_i)=3$ since $\mathfrak v(F_i)=(0,h,2)$. We want to prove that \(p\notin D_1\); \(D_1\) is defined as the closure in \(\M_{v_1}\) of a divisor on \(\cJ^{8}_{|2H|^{sm}}\), and \(p\in \M_{v_1}\smallsetminus \cJ^{8}_{|2H|^{sm}}\).

Because of the moduli-theoretic interpretation given by O'Grady in \cite[Section 2.2]{OG6}, for \(p\in \Sigma_1\) general there exists a 1:1 correspondence between the sheaves in the \(S\)-equivalence class \([F_1\oplus F_2]\) and simple semistable sheaves on \(C_1\cup C_2\) up to \(\widetilde S\)-equivalence; in particular,  any sheaf in \(\Ext^1(F_2,F_1)\) is identified with a sheaf  in \(\Ext^1(F_1,F_2)\).

Fix a point \(r\in C_1\cap\rho_0\). Any sheaf in the \(S\)-equivalence class  \([F_1\oplus F_2]\) corresponds to a simple sheaf \(F\) fitting in the following short exact sequence:
\[
0\rightarrow F_1\rightarrow F\rightarrow F_2\rightarrow 0.
\]
We call \(G_i\) the torsion free part of \(F|_{C_i}\), \(i=1,2\);  from the sequence above it follows that
\[
G_2\cong L_2,\ \ G_1\cong \widetilde L_1
\]
where \(\widetilde L_1\) is isomorphic to \(L_1(n_i)\) or \(L_1(n_1+n_2)\), depending on the rank of \(F\) on the nodes; notice that \(F\) can not have rank two on both the nodes, because by assumption \(F\) is a simple sheaf. It follows that $\widetilde L_1$ is a line bundle of degree 4 or 5 on $C_1$, then $\deg(\widetilde L_1(-4r))=0,1$, for $i=1,2$. Since $L_1$ is general and $g(C_1)=2$, one has in any case that $h^0(C_1,\widetilde L_1(-4r))=0$. By \cite[Lemma 1.0.7]{rapagnettaOG10} we have the following short exact sequence:
\[
0\to F(-4r)\to i_{1*}\widetilde L_1(-4r)\oplus i_{2*}L_2\to Q\to 0
\]
where \(i_j:C_j\hookrightarrow S\) is the inclusion of the curve and \(Q\) is the structure sheaf on \(C_1\cap C_2\) if \(F\) has rank 1 on both the nodes, and it is the structure sheaf on \(n_j\) if \(F\) has rank 2 on one node \(n_i\), where \(j\neq i\); observe that \(L_2(-4r)\cong L_2\) because \(r\notin C_2\). It follows:
\begin{equation}\label{eq.global.sections}
0 \to H^0(S,F(-4r))\to  H^0(S,i_{1*}\widetilde L_1(-4r))H^0(S,i_{2*}L_2)\to H^0(S,Q).
\end{equation}
When \(F\) has rank 1 on both the nodes \(n_1\) and \(n_2\), one has \(H^0(S,Q)=H^0(S,\CC_{n_1}\oplus \CC_{n_2})\cong \CC^2\), and any section \(\sigma\in H^0(C,F(-4r))\) is obtained gluing the zero section on \(C_1\) and a section in \(H^0(C_2,L_2(-n_1-n_2))\); from \(\deg(L_2)=3\) and \(L_2\) generic it follows \(\sigma=0\). By Remark \ref{rem.D.schematic} and Remark \ref{rem.D1eD1'} we conclude that in this case \(F\) is not obtained as a limit of sheaves in \(D_1\cap \cJ^8_{|2H|^{sm}}\).

When \(F\) has rank 2 on one of the two nodes, e.g.\ on \(n_1\), one has \(H^0(S,Q)=H^0(S,\CC_{n_2})\cong \CC\); in this case, from \eqref{eq.global.sections} any section \(\sigma\in H^0(S,F(-4r))\) is obtained gluing the zero section on \(C_1\) and a section in \(H^0(S,i_{2*}L_2(-n_2))\). From \(\deg (L_2(-n_2))=2\) and \(L_2\) general it follows \(h^0(C_2,L_2(-n_2))=1\), hence in this case \(H^0(S,F(-4r))=\CC\cdot \sigma\), where \(\sigma\) is obtained gluing the zero section on \(C_1\) and the (unique, up to scalar) non-zero section on \(C_2\) vanishing on \(n_2\). We want to conclude that also in this case \(F\) is not obtained as a limit of sheaves in \(D_1\cap \cJ^8_{|2H|^{sm}}\). Assume that there exists a 1-dimensional family of sheaves \(\cF_\Delta\to \Delta\), with \(\cF_0=F\) for some \(0\in \Delta\) and \(\cF_d\in D\cap \cJ^8_{|2H|^{sm}}\) for any \(d\in \Delta\smallsetminus \{0\}\). Consider the following incidence variety:
\[
I:=\{(r,[F])\in \rho_0\times \M_{v_1}\ |\ r\in \Supp(F)\}\subseteq \rho_0\times \M_{v_1}
\]
and call \(I':=I\cap \rho_0\times\cJ^8_{|2H|^{sm}}\subseteq\rho_0\times\M_{v_1}\). If we call:
\[
A:=\{(r,[F])\in I'\ | \ h^0(S,F(-4r))>0\}\subseteq\rho_0\times\M_{v_1}
\]
we have that \(D_1=\overline{p(A)}^{\M_{v_1}}=p\bigl(\overline A^{\rho_0\times\M_{v_1}}\bigl)\), where \(p:\rho_0\times\M_{v_1}\to \M_{v_1}\) is the projection; here we used that \(p\) is closed, since \(\rho_0\) is complete.  It follows that \(\cF_\Delta\) lifts to a 1-dimensional family \((r_\Delta,\cF_\Delta)\to \Delta\) with \((r_d,\cF_d)\in \overline A^{\M_{v_1}}\) for any \(d\in \Delta\) and \((r_0, \cF_0)=(r,F) \) for some \(0\in \Delta\); observe that this implies \(h^0(\cF_d(-4r_d))\ge 1\) for any \(d\in \Delta\). We have seen that \(h^0(F(-4r))=1\); up to working in a small analytic neighbourhood of \(F\in \M_{v_1}\), we can assume that \(h^0(\cF_d(-4r_d))= 1\) for any \(d\in \Delta\). We call \(\xi_d\) the vanishing locus of this unique section; we obtain for any \(d\in \Delta\smallsetminus \{0\}\) a well-defined cycle \(4r_d+\xi_d\in J^8(\Supp(\cF_d))\), with \(\xi_d\) effective and of degree 4 on the smooth curve \(\Supp(\cF_d)\). For \(d\to 0\) the family \(\{4r_d+\xi_d\}_{d\in \Delta\smallsetminus \{0\}}\) has limit an effective cycle on \(C=C_1\cup C_2\), which is by definition the sum of the cycle \(4r\) and the zero locus of the unique (up to scalars) section \(\sigma\) of \(H^0(C,F(-4r))\); this contradicts the vanishing of \(\sigma\) on the curve \(C_1\). We conclude that \(F\) can not be written as a limit of sheaves in \(D_1\cap \cJ^8_{|2H|^{sm}}\).

Fix now a point \(r\in C_2\cap\rho_0\).  In this case we can take \(F\) as an extension in \(\Ext^1(F_1,F_2)\), and proceed as before to conclude that \(F\) is not obtained as a limit of sheaves in \(D_1\cap \cJ^8_{|2H|^{sm}}\). It follows that \(p=[F]\notin D_1\), and the statement of the Proposition is proved.
\end{proof}

\begin{rem} In Remark \ref{rem.D1eD1'} we have observed that given a sheaf \([F]\in\M_{v_1}\) such that \(h^0(S,F(-4r))>0\) for some \(r\in \Supp(F)\cap \rho_0\) then \([F]\in D_1'\), but we can not conclude \([F]\in D_1\). An example of that is in the proof of Proposition \ref{lem.sing.locus.D1}, when the sheaf \(F\) has rank 2 on one on the two nodes in \(C_1\cap C_2\). 
\end{rem}

\begin{cor}\label{cor.D1*.ruled.pos} The divisor $D_1^*=\widetilde D_1$ introduced in Theorem \ref{thm.q(D1*)} and the divisor \(B_1:=\widetilde D_1+\widetilde \Sigma_1\) are positive uniruled divisors on the $\OG10$ manifold $\widetilde\M_{v_1}$, ruled by reduced curves; here \(\widetilde \Sigma_1\) is the exceptional divisor of the symplectic desingularization \(\widetilde\pi_1:\widetilde\M_{v_1}\to\M_{v_1}\). If \(\Gamma_{v_1}\) is the lattice defined in \eqref{eq.Gamma_v} and \(f_{v_1}:\Gamma_{v_1}\xrightarrow{\sim}H^2(\widetilde\M_{v_2},\ZZ)\) is the isometry introduced in Theorem \ref{thm.Gamma_v}, then:
\[
f_{v_1}^{-1}(\widetilde D_1)=\bigl((-4,-4h,52),0\bigl)\in\Gamma_{v_1}
\]
\[
f_{v_1}^{-1}(B_1)=\bigl((-4,-4h,52),\sigma\bigl)\in\Gamma_{v_1}.
\]
Furthermore:
\[
q_{10}(\widetilde D_1)=448>0, \ q_{10}(B_1)=442>0.
\]
\end{cor}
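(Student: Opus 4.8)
The plan is to read this corollary off the two preceding results, Theorem~\ref{thm.q(D1*)} and Proposition~\ref{lem.sing.locus.D1}, after rewriting $H^2(\widetilde\M_{v_1},\ZZ)$ in the coordinates provided by the lattice $\Gamma_{v_1}$ of \eqref{eq.Gamma_v}.

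First I would record the ambient lattice. Since $\M_{v_1}$ is locally factorial (checked via Theorem~\ref{thm.factorial} in the remark opening Subsection~\ref{subsection:class D1}), the remark following Theorem~\ref{thm.factorial}, i.e.\ \cite[Proposition 4.1,(2)]{PR:factoriality}, gives an orthogonal splitting $\Gamma_{v_1}\cong v_1^\perp\oplus^\perp\ZZ\cdot\sigma$ with $\sigma^2=-6$. Under the Hodge isometry $f_{v_1}$ of Theorem~\ref{thm.Gamma_v}, an element $(\mu,0)$ with $\mu\in v_1^\perp$ maps to $(\widetilde\pi_1\circ\lambda_{v_1})(\mu)$, while the generator $(0,\sigma)$, obtained by setting $\beta=0$ and $k=2$ in the formula for $f_v$, maps to $\widetilde\Sigma_1$.

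Next I would identify the two classes. By Proposition~\ref{lem.sing.locus.D1} we have $D_1^*=\widetilde D_1$, with no exceptional component; this is precisely what prevents the non-reduced ruling flagged in Remark~\ref{rem.pullback.strict.trans}. By Theorem~\ref{thm.q(D1*)} one has $\lambda_{v_1}^{-1}(D_1)=-4e+52f=(-4,-4h,52)\in v_1^\perp$, so $\widetilde D_1=(\widetilde\pi_1\circ\lambda_{v_1})((-4,-4h,52))$ and hence $f_{v_1}^{-1}(\widetilde D_1)=((-4,-4h,52),0)$. Adding $\widetilde\Sigma_1=f_{v_1}((0,\sigma))$ yields $f_{v_1}^{-1}(B_1)=((-4,-4h,52),\sigma)$.

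Finally, since $f_{v_1}$ is an isometry and the splitting is orthogonal, the squares come for free: $q_{10}(\widetilde D_1)=(-4,-4h,52)^2=16h^2-2(-4)(52)=32+416=448$ (matching Theorem~\ref{thm.q(D1*)}), and $q_{10}(B_1)=(-4,-4h,52)^2+\sigma^2=448-6=442$, both positive. The uniruledness of $\widetilde D_1$ and of $B_1$ and the fact that they are ruled by reduced curves were already established when these divisors were introduced in Subsection~\ref{subsection:divisor1}, and together with the positive squares this completes the statement. In effect the only substantive inputs are Proposition~\ref{lem.sing.locus.D1} (guaranteeing that the ruling of $\widetilde D_1$ stays reduced) and the Mukai-square computation of Theorem~\ref{thm.q(D1*)}; the work here is purely to transport these into $\Gamma_{v_1}$, so I anticipate no real obstacle beyond correctly tracking the $\sigma^2=-6$ contribution to $q_{10}(B_1)$.
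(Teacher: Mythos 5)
Your proposal is correct and follows exactly the route the paper intends: the paper's own proof is a one-line appeal to Theorem \ref{thm.q(D1*)}, Proposition \ref{lem.sing.locus.D1} and the explicit form of $f_{v_1}$ from Theorem \ref{thm.Gamma_v}, and you have simply filled in the same details (the splitting $\Gamma_{v_1}\cong v_1^\perp\oplus^\perp\ZZ\cdot\sigma$ from local factoriality, $f_{v_1}((0,\sigma))=\widetilde\Sigma_1$, and the Mukai-square arithmetic). No gaps.
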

\begin{proof}
This is a straightforward consequence of Theorem \ref{thm.q(D1*)} and Proposition \ref{lem.sing.locus.D1}, and of the explicit description of the isometry \(f_{v_1}\) given in Theorem \ref{thm.Gamma_v}.
\end{proof}

\subsection{The class of $\widetilde D_2$ and of \(B_2\)}\label{subsection:class.D2}

We pass to the divisor $D_2\subseteq\M_{v_2}$, introduced in Definition \ref{def.D2}. The main result of the section is the following theorem.

\begin{thm}\label{thm.q(tildeD2)} Let $D_2$ be the divisor introduced in Definition \ref{def.D2}, $\widetilde D_2\subseteq\widetilde\M_{v_2}$ the strict transform of $D_2$ via the symplectic desingularization $\tilde\pi_2:\widetilde\M_{v_2}\rightarrow \M_{v_2}$, and \(B_2:=\widetilde D_2+\widetilde \Sigma_2\), where \(\widetilde \Sigma_2\) is the exceptional divisor of \(\widetilde \pi_2\). Furthermore, let \(\Gamma_{v_2}\) be the lattice defined in \eqref{eq.Gamma_v}, and \(f_{v_2}:\Gamma_{v_2}\xrightarrow{\sim} H^2(\widetilde\M_{v_2},\ZZ)\) the isometry introduced in Theorem \ref{thm.Gamma_v}. Then:
\[
f_{v_2}^{-1}(\widetilde D_2)=\bigl((-4,-2h,22),-2\sigma\bigl)\in \Gamma_{v_2}
\] 
\[
f_{v_2}^{-1}(B_2)=\bigl((-4,-2h,22),-\sigma\bigl)\in \Gamma_{v_2}
\] 
and 
\[
q_{10}(\widetilde D_2)=160>0,\ q_{10}(B_2)=178>0.
\]
\end{thm}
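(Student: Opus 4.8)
The plan is to run Strategy \ref{strategy} for $v_2$ exactly as it was run for $v_1$ in Theorem \ref{thm.q(D1*)}, Proposition \ref{lem.sing.locus.D1} and Corollary \ref{cor.D1*.ruled.pos}, paying attention to the two features that distinguish $v_2=(0,2h,2)$ from the locally factorial case. First I would settle factoriality: writing $v_2=2w$ with $w=(0,h,1)$ and taking a general $(1,1)$-class $\gamma=(a,bh,c)$, the Mukai pairing is $\langle\gamma,w\rangle=2b-a$, which is odd for $\gamma=(1,0,0)$. Hence by Theorem \ref{thm.factorial} the moduli space $\M_{v_2}$ is only $2$-factorial, so $D_2$ need not be Cartier and, in contrast with Proposition \ref{lem.sing.locus.D1}, the exceptional divisor $\widetilde\Sigma_2$ will enter the class of the strict transform with a nonzero coefficient.

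Next I would compute the two curve intersections, which fix the $v_2^\perp$-component of the class. For the vertical curve $\Gamma_2\subseteq J^6(\gamma)$ I repeat the theta-divisor argument of Proposition \ref{prop.D.Gamma}: one has $[\Gamma_2]=[\theta^4/4!]$, while $D_2$ cuts the fibre in the four translated theta divisors $\{L:\ L(-2r_i)\ \text{effective}\}$ with $r_i\in\gamma\cap\rho_0$ (Remark \ref{rem.theta}), each of class $[\theta]$, so that $D_2\cdot\Gamma_2=4[\theta^5/4!]=20$. Together with $\lambda_{v_2}(e)\cdot\Gamma_2=-10$ and $\lambda_{v_2}(f)\cdot\Gamma_2=0$ from Proposition \ref{prop.inters.lambda4}, this forces the coefficient of $e$ to be $-2$.

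For the horizontal curve $\mathcal T_2$ I would carry out the Grothendieck--Riemann--Roch computation of Proposition \ref{prop.D.T} on the fibre product $X$, now with $\mathcal E_{\mathcal T_2}$ corresponding to $\cO_{\tilde S}(3E_1+E_2+E_3+E_4)$ and with the twist $\cI_{\hat R}^{\otimes2}$ in place of $\cI_{\hat R}^{\otimes4}$. A genuinely new point here is that the three reducible fibres of $\tau$ carry sheaves of multidegree $(4,2)$ on their two genus-$2$ components, which are strictly semistable and thus lie in $\Sigma_2$; so, unlike for $\mathcal T_1$, the curve $\mathcal T_2$ meets the singular locus, and the analogues of Lemma \ref{lemma.D.T.liscia} and Lemma \ref{lem.checks.D.T} must be re-proved to control the contribution of these fibres. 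The outcome should be $D_2\cdot\mathcal T_2=38$, which with $\lambda_{v_2}(e)\cdot\mathcal T_2=-8$, $\lambda_{v_2}(f)\cdot\mathcal T_2=1$ gives the coefficient of $f$ equal to $22$. Thus the pullback $D_2^*=\widetilde\pi_2^*D_2$ has $v_2^\perp$-class $-2e+22f=(-4,-2h,22)$ and vanishing $\sigma$-component, so $q_{10}(D_2)=q_{10}(D_2^*)=(-4,-2h,22)^2=184$, using $e^2=2$, $\langle e,f\rangle=-2$, $f^2=0$.

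The main obstacle is the determination of the $\sigma$-component, i.e. of the multiplicity $m$ in $\widetilde D_2=D_2^*-m\widetilde\Sigma_2$: the lattice $\Gamma_{v_2}$ only constrains $m$ to be an integer, so it must be computed geometrically by the analogue of Proposition \ref{lem.sing.locus.D1}. I would describe a general point of $\Sigma_2$ as the class of a polystable sheaf $i_{1*}L_1\oplus i_{2*}L_2$, with $L_i$ a general degree-$2$ line bundle on a smooth genus-$2$ curve $C_i\in|H|$ meeting transversally, and analyze, via the exact sequence of \cite[Lemma 1.0.7]{rapagnettaOG10}, the sections $h^0(F(-2r))$ of the simple sheaves $F$ in the $S$-equivalence class; the key difference from the $v_1$-case is that now the twisted restriction to $C_i$ has degree $2$ on a genus-$2$ curve, so $h^0>0$ does occur, and a count of how these conditions are met shows that $\Sigma_2$ sits in $D_2$ with multiplicity $m=2$. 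Granting this, $\widetilde D_2=D_2^*-2\widetilde\Sigma_2$ has class $((-4,-2h,22),-2\sigma)$; since $\widetilde\Sigma_2$ corresponds to $(0,\sigma)$ with $\sigma^2=-6$ and is orthogonal to $D_2^*$, I obtain $q_{10}(\widetilde D_2)=184+4(-6)=160$, and then $B_2=\widetilde D_2+\widetilde\Sigma_2$ has class $((-4,-2h,22),-\sigma)$ with $q_{10}(B_2)=184-6=178$. Both squares are positive, which is exactly what Remark \ref{rem.need.positivity} requires, and this completes the proof.
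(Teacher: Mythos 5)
Your outline follows the paper's own route: the $v_2^\perp$-component is pinned down exactly as you describe, by $D_2\cdot\Gamma_2=20$ (translated theta divisors in $J^6(\gamma)$) and $D_2\cdot\cT_2=38$ (Grothendieck--Riemann--Roch on the fibre product with $\cI_{\hat R}^{\otimes 2}$ and suitably modified correction divisors $B_i$), and all your numbers, including $(-4,-2h,22)^2=184$ and the final squares $160$ and $178$, agree with the paper. The genuine gap sits precisely in the step you flag as the main obstacle. Since $\M_{v_2}$ is only $2$-factorial, you cannot write $D_2=-2\lambda_{v_2}(e)+22\lambda_{v_2}(f)$, nor form $D_2^*=\tilde\pi_2^*D_2$, before proving that $D_2$ is Cartier; and the coefficient $m$ of $\widetilde\Sigma_2$ is not accessible by merely counting which sheaves $F$ in the $S$-equivalence class of a general point of $\Sigma_2$ satisfy $h^0(F(-2r))>0$ --- that condition only detects membership in the larger divisor $D_2'$ and gives the set-theoretic containment $\Sigma_2\subseteq D_2$ (Lemma \ref{lemma.D2containsSigma}), not a scheme-theoretic multiplicity.

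The paper settles both issues at once by transferring the computation to the resolution. For $\delta\cong\PP^1$ the exceptional fibre over a general point of $\Sigma_2$ one has $\tilde\pi_2^*(nD_2)\cdot\delta=0$ and $\widetilde\Sigma_2\cdot\delta=-2$, whence $n\,\widetilde D_2\cdot\delta=2m$; so $D_2$ is Cartier if and only if $\widetilde D_2\cdot\delta$ is even, and in that case $m=\tfrac12\,\widetilde D_2\cdot\delta$. The value $\widetilde D_2\cdot\delta=4$ is then obtained by a Grothendieck--Riemann--Roch computation for the extension family $\cE_\delta$ on $C\times\delta$ twisted by $\cI_{r}^{\otimes 2}$, summed over the four points of $C\cap\rho_0$, after an argument (using O'Grady's identification of points of $\delta$ with simple sheaves on $C_1\cup C_2$, the sequence from \cite[Lemma 1.0.7]{rapagnettaOG10}, and the limiting-family construction of Lemma \ref{lemma.D2containsSigma}) showing that every $F\in\delta$ with $h^0(F(-2r))>0$ actually lies on $\widetilde D_2$ and not merely on $\widetilde D_2'$, so that no overcounting occurs. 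If you replace your ``count of how these conditions are met'' by this intersection computation on $\delta$, your argument closes and coincides with the paper's proof.
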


Theorem \ref{thm.q(tildeD2)} will be proved following  Strategy \ref{strategy}, as done in the previous subsection for the divisor $D_1^*$. Nevertheless, the case of the divisor $D_2$  is a bit more delicate, because the space $\M_{v_2}$ is not locally factorial, but only 2-factorial; this is an immediate consequence of Theorem \ref{thm.factorial}. It follows that, before applying Strategy \ref{strategy}, we need to check that the divisor $D_2$ is a Cartier divisor. 

\begin{lem}\label{lemma.D2containsSigma} The divisor $D_2$ contains the singular locus $\Sigma_2:=\M_{v_2}\setminus\M_{v_2}^s$.
\end{lem}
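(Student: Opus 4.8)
The plan is to show that the general point of $\Sigma_2$ is a flat limit of sheaves parametrised by $D_2\cap\cJ^6_{|2H|^{sm}}$; contrary to the situation of Proposition~\ref{lem.sing.locus.D1}, here the multiplicity $2r$ in Definition~\ref{def.D2} (rather than $4r$) is low enough that these limits sweep out all of $\Sigma_2$. Exactly as in the proof of Proposition~\ref{lem.sing.locus.D1}, a general point $p\in\Sigma_2$ is the $S$-equivalence class of a polystable sheaf $i_{1*}L_1\oplus i_{2*}L_2$ supported on a reducible curve $C=C_1\cup C_2$, with $C_i\in|H|$ smooth, $C_1\cap C_2=\{n_1,n_2\}$ with $n_1\neq n_2$, and $L_i\in\Pic^2(C_i)$, the degree being forced by $\mathfrak v(i_{i*}L_i)=v_2/2=(0,h,1)$. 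Since $C_i$ varies in $|H|\cong\PP^2$ and $L_i$ in $\Pic^2(C_i)$, this describes a dense $8$-dimensional subset of the codimension-two locus $\Sigma_2\subseteq\M_{v_2}$.

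First I would produce, for an arbitrary datum $(C_1,C_2,L_1,L_2)$ as above, an honest family in $D_2$ degenerating to $p$. Choose a point $r\in C_1\cap\rho_0$ (there are two, as $C_1\cdot\rho_0=H^2=2$) with $r\notin\{n_1,n_2\}$. Using that the Abel--Jacobi map $\Sym^2 C_i\to\Pic^2(C_i)$ is surjective in genus $2$, pick $p_1,p_2\in C_1$ with $\cO_{C_1}(p_1+p_2)\cong L_1\otimes\cO_{C_1}(n_1+n_2-2r)$ and $p_3,p_4\in C_2$ with $\cO_{C_2}(p_3+p_4)\cong L_2$, all away from the nodes and from $C_2\cap\rho_0$. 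Then I would deform $C$ in a one-parameter family $C_T\to T$ with $C_t\in|2H|^{sm}$ for $t\neq0$ and $C_0=C_1\cup C_2$, together with sections specialising to $r,p_1,\dots,p_4$ and with $r_t\in\rho_0$; the line bundles $\cO_{C_t}(2r_t+p_{1,t}+\cdots+p_{4,t})$, which represent points of $D_2\cap\cJ^6_{|2H|^{sm}}$ (this is the locus ruled by the map $\phi$ of Definition~\ref{def.D2} and Lemma~\ref{lem.D.uniruled}), glue to a relative line bundle whose restriction to $C_0$ is $F:=\cO_{C_0}(2r+p_1+p_2+p_3+p_4)$, a line bundle of multidegree $(4,2)$ on $C_1\cup C_2$.

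It then remains to identify the flat limit $[i_*F]\in\M_{v_2}$ with $p$. The sheaf $i_*F$ is strictly semistable: by the restriction sequence of \cite[Lemma~1.0.7]{rapagnettaOG10} its Jordan--Hölder factors are $i_{1*}\bigl(F|_{C_1}(-n_1-n_2)\bigr)$ and $i_{2*}\bigl(F|_{C_2}\bigr)$, both of Mukai vector $(0,h,1)=v_2/2$, so in $\M_{v_2}$ one has $[i_*F]=\bigl[i_{1*}(F|_{C_1}(-n_1-n_2))\oplus i_{2*}(F|_{C_2})\bigr]$. By the choice of $p_1,\dots,p_4$ this associated graded is precisely $i_{1*}L_1\oplus i_{2*}L_2$, hence $[i_*F]=p$. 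Therefore $p\in\overline{D_2\cap\cJ^6_{|2H|^{sm}}}=D_2$; since $p$ was a general point of $\Sigma_2$ and $D_2$ is closed, I conclude $\Sigma_2\subseteq D_2$.

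The main obstacle is the rigorous identification of the flat limit: one must verify that the limit of the line bundles $\cO_{C_t}(2r_t+\sum p_{i,t})$ in the moduli space (equivalently, in the relative compactified Jacobian) is the multidegree-$(4,2)$ line bundle $i_*F$ rather than a semistable sheaf with embedded torsion at the nodes, which is what keeping the chosen points off the nodes and off $C_2\cap\rho_0$ is for, and then to read off its associated graded as above. I would also emphasise the contrast with Definition~\ref{def.D}, which explains the opposite conclusion of Proposition~\ref{lem.sing.locus.D1}: the balanced stable multidegree is $(5,3)$ for $v_1$ and $(4,2)$ for $v_2$, and with the coefficient $4r$ only one free point remains on the component carrying $r$, so its image in $\Pic^3(C_1)$ is a curve and the honest limits miss a divisor in $\Sigma_1$, whereas with $2r$ two free points remain on each component and the $L_i$ cover all of $\Pic^2(C_i)$.
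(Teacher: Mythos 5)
Your proof is correct and follows essentially the same route as the paper: both write the polystable representative of a general point of $\Sigma_2$ as $i_{1*}\cO_{C_1}(2r+p_1+p_2-n_1-n_2)\oplus i_{2*}\cO_{C_2}(p_3+p_4)$ using effectivity of degree-$2$ line bundles on genus-$2$ curves, and then exhibit it as the limit of the sheaves $i_*\cO_{C_t}(2r_t+p_1+\cdots+p_4)$ lying in $D_2\cap\cJ^6_{|2H|^{sm}}$. The paper makes your one-parameter family concrete by taking the pencil in $|2H|$ with base points $p_1,\dots,p_4$ and letting the remaining point run along $\rho_0$ via the universal curve, which is exactly what resolves the flat-limit concern you flag at the end.
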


\begin{proof}
We start as in the proof of Lemma \ref{lem.sing.locus.D1}: a general point $p\in \Sigma_2$ is a $S$-equivalence class of sheaves supported on a curve $C=C_1\cup C_2$, with $C_i\in|H|$ smooth, $i=1,2$, and $C_1\cap C_2=\{n_1,n_2\}$, with $n_1\neq n_2$; it has polystable representative $F_1\oplus F_2$ with $F_1$ and $F_2$ non-isomorphic stable sheaves supported on $C_1$ and $C_2$ respectively, and $\mathfrak v(F_1)=\mathfrak v(F_2)=\frac{v_2}{2}=(0,h,1)$ (see \cite{OG10} and the proof of Proposition 5.2 in \cite{lehnsorger}). As consequence, for $i=1,2$ the sheaf $F_i$ is the push-forward of a (general) torsion-free sheaf of rank 1 on $C_1$, hence a line bundle $L_i$ because of the smoothness of $C_i$; also, $\deg(L_i)=2$ since $\mathfrak v(F_i)=(0,h,1)$. We want to prove that such a general $p\in\Sigma_2$ belongs to $D_2$.

By generality of $L_2$, we can assume that it is effective, since $g(C_2)=2$. Fixed $r\in C_1\cap \rho$, any line bundle of degree 2 on $C_1$ can be written as $x+2r-n_1-n_2$, with $x\in J^2(C_1)$; this is true because $x\mapsto x+2r-n_1-n_2$ is an isomorphism of $J^2(C_1)$. Furthermore, we can assume that $x$ is effective because $g(C_1)=2$. Summing up, we got that we can write a general $p\in \Sigma_2$ as 
\begin{equation}\label{eq.pinSigma}
p=[j_{1,*}\cO_{C_1}(2r+y_1+y_2-n_1-n_2)\oplus j_{2,*}\cO_{C_2}(y_3+y_4)],
\end{equation}
for some $y_1,y_2\in C_1$ and $y_3,y_4\in C_2$, where $j_i:C_i\hookrightarrow S$ is the inclusion, $i=1,2$;  we want to prove that such a point belongs to $D_2$. By generality of $p\in\Sigma$, the points $y_1,...,y_4$ that we got in the expression \eqref{eq.pinSigma} determine a pencil $Q\subset|2H|$, and a point in $\rho$ determines one curve of the pencil. Let us consider the incidence variety $\mathscr C\subset S\times\rho$, $\mathscr C:=\{(y,x)\in S\times \rho\ |\ y\in C_x\}$, with $C_x$ the unique curve of $Q$ determined by $x\in \rho$; let $i:\mathscr C\hookrightarrow S\times \rho$ be the inclusion and $\Delta\subset S\times\rho$ the diagonal. The curve parametrized by the family $i_*\cO_{\mathscr C}(2\Delta+y_1\times\rho+...+y_4\times\rho)$ is contained in the divisor $D_2$ (on the smooth curves of $Q$ it consists of line bundles as in the definition of $D_2$), and choosing $r\in\rho$ as in \eqref{eq.pinSigma} we get back the class of the point $p$. We conclude that $p\in D_2$ and then $\Sigma_2\subset D_2$.
\end{proof}

Since the singular locus $\Sigma_2$ is contained in the divisor $D_2$, the divisor $D_2$ can actually happen to be non Cartier. However this is not the case, as stated in the following result.

\begin{prop}\label{prop.D.Cartier} $D_2$ is a Cartier divisor.
\end{prop}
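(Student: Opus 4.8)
The plan is to localize the question along the singular locus $\Sigma_2$ and to detect the $\ZZ/2$ obstruction to being Cartier by intersecting the strict transform with a contracted rational curve on the resolution. Since $\M_{v_2}$ is only $2$-factorial (Theorem \ref{thm.factorial}), the cokernel of $\Pic\to A^1$ is $2$-torsion and is supported on $\Sigma_2$; by Lemma \ref{lemma.D2containsSigma} we already know $\Sigma_2\subseteq D_2$, so the issue is genuinely present and must be analysed transversally to $\Sigma_2$. The first step is to recall, from \cite{lehnsorger} and \cite{kaledin.lehn.sorger} (see also \cite{OG10}), the local structure of $\M_{v_2}$ at a general point $p=[F_1\oplus F_2]\in\Sigma_2$, where $F_i=(j_i)_*L_i$ are non-isomorphic stable sheaves with $\mathfrak v(F_i)=w=(0,h,1)$: the stabilizer is $\CC^*$ and, after symplectic reduction of $\Ext^1(F_1,F_2)\oplus\Ext^1(F_2,F_1)\cong\CC^2\oplus\CC^2$ by this $\CC^*$, the transverse singularity is the surface $A_1$ (that is $\CC^2/\pm1$). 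Consequently $\Sigma_2$ has codimension $2$, the symplectic resolution $\tilde\pi_2$ restricts over a transverse slice to the minimal resolution of $A_1$, and the exceptional divisor $\widetilde\Sigma_2$ meets such a slice in a single $(-2)$-curve $E\cong\PP^1$, so that $\widetilde\Sigma_2\cdot E=-2$.

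Granting this picture, I would reduce Cartierness of $D_2$ to the parity of the integer $\widetilde D_2\cdot E$. Since $2D_2$ is Cartier, I can pull it back and write $\tilde\pi_2^*(2D_2)=2\widetilde D_2+a\,\widetilde\Sigma_2$ with $a\in\ZZ$, where $\widetilde D_2$ is the strict transform. Intersecting with the contracted curve $E$ and using $\tilde\pi_2^*(2D_2)\cdot E=0$ together with $\widetilde\Sigma_2\cdot E=-2$ gives $a=\widetilde D_2\cdot E$. If $a$ is even, then $M:=\widetilde D_2+\tfrac a2\widetilde\Sigma_2$ is an integral Cartier divisor on the smooth manifold $\widetilde\M_{v_2}$ with $M\cdot E=0$; being trivial on every fibre of the divisorial contraction $\tilde\pi_2$, it descends to a line bundle $N$ on $\M_{v_2}$ with $\tilde\pi_2^*N=M$, and since $N$ and $D_2$ agree on the smooth locus $\M_{v_2}^s$ they coincide as Weil divisors, so $D_2=N$ is Cartier. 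Conversely an odd $a$ would force the coefficient of $\widetilde\Sigma_2$ in the correct pullback to be a half-integer, so $D_2$ would not be Cartier; hence the whole problem is the parity of $\widetilde D_2\cdot E$.

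It then remains to compute $\widetilde D_2\cdot E$, which I would do by the section-counting used for $D_1$ in Lemma \ref{lem.D.uniruled} and Proposition \ref{lem.sing.locus.D1}. The curve $E$ sits over $p$ and, away from finitely many points, parametrizes the stable sheaves on $C=C_1\cup C_2$ that are $S$-equivalent to $F_1\oplus F_2$, i.e.\ the non-split extensions of $F_2$ by $F_1$; these form a $\PP^1$. For $r\in\rho_0\cap C$ the membership of such an $F$ in $D_2$ is governed by $h^0(F(-2r))>0$, and twisting the defining extension by $-2r$ shows that the coboundary map $H^0(F_2)\to H^1(F_1(-2r))$ (respectively the analogous map for the opposite extension when $r\in C_2$) is a single scalar depending linearly on the extension parameter; its vanishing cuts out exactly one point of $E$. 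As $\rho_0\cdot C_i=H^2=2$, there are two such points on $C_1$ and two on $C_2$, so that $\widetilde D_2\cdot E=4$, which is even and yields the Proposition.

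The main obstacle is exactly this last step, and it is the same subtlety met in Proposition \ref{lem.sing.locus.D1}: the inequality $h^0(F(-2r))>0$ is only necessary, not sufficient, for $F$ to lie on the strict transform $\widetilde D_2$, because a section supported on the wrong component (for instance one coming from $H^0(F_1)$ when $r\in C_2$) does not realize $F$ as a limit of sheaves $i_*\cO_{C'}(2r+p_1+\dots+p_4)$ with $C'$ smooth. Thus I must control, at each of the four candidate points and at the finitely many points of $E$ where $F$ degenerates to a non-locally-free sheaf, which sheaves genuinely belong to $\widetilde D_2$ and with what multiplicity, exactly as in the proof of Proposition \ref{lem.sing.locus.D1}. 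Once this bookkeeping confirms that each of the four points of $\rho_0\cap C$ contributes a single reduced point of $\widetilde D_2\cap E$, the evenness of $\widetilde D_2\cdot E$ — and hence the Cartier property of $D_2$ — follows.
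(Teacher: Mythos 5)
Your reduction is exactly the paper's: both proofs test Cartierness by the parity of the intersection of the strict transform $\widetilde D_2$ with the contracted $\PP^1$ (your $E$, the paper's $\delta$) over a general point of $\Sigma_2$, using $\widetilde\Sigma_2\cdot\delta=-2$ and the $2$-factoriality of $\M_{v_2}$, and both arrive at $\widetilde D_2\cdot\delta=4$. Where you differ is in how the number $4$ is produced: you argue that for each of the four points $r\in\rho_0\cap C$ the condition $h^0(F(-2r))>0$ is the vanishing of a coboundary $H^0(F_2)\to H^1(F_1(-2r))$ that is linear in the extension class, hence cuts out one point of $E$; the paper instead runs Grothendieck--Riemann--Roch on the extension family $\cE_\delta$ over $\delta$ and computes $\deg R^1q_*(\cE_\delta\otimes\cI_{r_i}^{2})=1$ for each $r_i$. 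The paper's route is more robust on two points your sketch leaves open: it automatically accounts for multiplicities (your ``single reduced point'' needs the coboundary functional to be a \emph{nonzero} linear form on $\Ext^1(F_2,F_1)\cong\CC^2$ for each $r_i$, which you do not verify, and the exact value $4$ is needed later in Remark \ref{rem.class.D2*}), and it makes explicit the switch to $\Ext^1(F_1,F_2)$, via O'Grady's moduli-theoretic identification, when $r$ lies on $C_2$ — otherwise the section coming from $H^0(F_1)$ pollutes the count, as you yourself note.

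The genuine gap is the step you defer to ``bookkeeping'': showing that the four sheaves $F\in\delta$ with $h^0(F(-2r))>0$ actually lie on $\widetilde D_2$, i.e.\ arise as limits of sheaves $i_*\cO_{C'}(2r+p_1+\dots+p_4)$ with $C'$ smooth. This is the real content of the paper's proof and it cannot be presumed, because in the exactly parallel situation for $D_1$ (Proposition \ref{lem.sing.locus.D1}) the answer goes the \emph{other} way: there the analogous sheaves are shown \emph{not} to lie on $D_1$, which is precisely why $D_1^*=\widetilde D_1$ while $D_2^*=\widetilde D_2+2\widetilde\Sigma_2$. The paper settles it by a two-case analysis: sheaves of rank two at a node are excluded because the putative section of $F(-2r)$ is forced to vanish on one component and hence everywhere; and for locally free $F$ one writes the zero locus of the section as $2r+x_1+x_2+y_1+y_2$ with the $x_i,y_i$ in general position and explicitly constructs, via the pencil through those four points as in Lemma \ref{lemma.D2containsSigma}, a one-parameter family in $D_2\cap\cJ^6_{|2H|^{sm}}$ degenerating to $F$. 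Until that construction (or an equivalent one) is carried out, your proof establishes only the inequality $\widetilde D_2\cdot\delta\le 4$, which does not determine the parity.
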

\begin{proof}

Let $\widetilde D_2$ be the strict transform of $D_2$ via the symplectic resolution $\tilde\pi_2:\widetilde\M_{v_2}\rightarrow \M_{v_2}$, and let $\delta\subset\widetilde\M_{v_2}$ be the fiber of $\tilde\pi_2$ over a general point $p\in\Sigma_2$. A divisor $D_2\subset\M_{v_2}$ is Cartier if and only if the intersection $\widetilde D_{v_2}\cdot\delta$ is even: let $n$ be the smallest positive integer such that $nD_2$ is Cartier, and let $m$ be the multiplicity of $\Sigma_2$ in $nD_2$, i.e.\ the multiplicity of a general $p\in\Sigma_2$ in $D_2$. Then: 
\[
\tilde\pi_2^*(nD_2)=n\tilde D_2+m\tilde\Sigma_2.
\] 
We intersect the divisors in the expression above with $\delta$: $\tilde\pi_2^*(nD_2)\cdot\delta=0$ by the projection formula because $\delta$ gets contracted by $\tilde\pi_2$, and $\tilde\Sigma_2\cdot\delta=-2$ because $\widetilde\M_{v_2}$ has trivial canonical bundle (cf. proof of Theorem 2.0.8 in \cite{rapagnettaOG10}). We get:
\[
n\tilde D_2\cdot\delta=2m.
\] 
It follows that if the intersection $\tilde D_2\cdot\delta$ is an odd number, the number $n$ needs to be an even number, ore more precisely $n=2$, since as already noticed $n$ can be just 1 or 2, see Theorem \ref{thm.factorial}; in other words, if $\widetilde D_2\cdot\delta$ is an odd number then $D_{v_2}$ is not a Cartier divisor. On the other hand, if $\widetilde D_2\cdot\delta=2a$ then $m=an$ and 
\[
\tilde\pi_2^*(nD_2)=n\tilde D_2+an\tilde\Sigma_2=n(\widetilde D_2+a\tilde\Sigma_2)
\] 
where $(\widetilde D_2+a\tilde\Sigma_2)\cdot\delta=0$. This last equality implies that $\widetilde D_2+a\tilde\Sigma_2$ is the pullback of some Cartier divisor in $\M_{v_2}$, then $D_2$ is Cartier.

We are going to compute the intersection $\widetilde D_2\cdot \delta$. Fix $p\in\Sigma_2$ general; as already noticed at the beginning of the proof of Lemma \ref{lemma.D2containsSigma}, we can assume that $p$ is the $S$-equivalence class of sheaves with polystable representative $F_1\oplus F_2$, with $\mathfrak v(F_i)=(0,h,1)$ and $\Supp(F_i)=C_i\in |H|$ smooth curve; we call $C:=C_1\cup C_2$ and $C_1\cap C_2=:\{n_1,n_2\}$. The curve \(\delta\) is isomorphich to \(\PP^1\) (\cite[Claim 2.2.4 and 3.0.10]{OG10}), and it parametrizes extensions in \(\Ext^1(F_2,F_1)\); it is  defined by the following extension sheaf:
\[
0\rightarrow p^*F_1\otimes q^*\cO_{\delta}(1)\rightarrow \cE_\delta\rightarrow p^*F_2\rightarrow 0
\]
where $p:S\times\delta\rightarrow S$ and $q:S\times\delta\rightarrow \delta$ are the projections (cf. \cite[Example 2.1.12]{huy.lehn}). 

As done for the intersection $D_1\cdot\cT_1$ in Proposition \ref{prop.D.T}, in order to compute the intersection $\widetilde D_2\cdot \delta$ we divide the computation in two parts: first, we give an easier to compute reformulation of the intersection; secondly, we proceed with the actual computation. \\

\underline{\textit{Reformulation of the intersection} $D_2\cdot\delta$.} As first remark, we recall that the divisors $D_2$ and $D'_2$ have been defined as closure in $\M_{v_2}$ of divisors defined in $\cJ^6_{|2H|^{sm}}$ and $\M_{v_2}^s$ respectively, see Remark \ref{rem.D2.schematic}. As consequence, the strict transforms $\widetilde D_2$ and $\widetilde D'_2$ are defined as the closure in $\widetilde \M_{v_2}$ of the same divisors, since the desingularization $\widetilde \pi_2:\widetilde\M_{v_2}\rightarrow\M_{v_2}$ is an isomorphism on $\widetilde\pi^{-1}(\M_{v_2}^s)\supseteq \widetilde\pi^{-1}(\cJ^6_{|2H|^{sm}})$. 

As usual, we denote by $\nu:\rho\rightarrow\rho_0$ the normalization of the rational curve $\rho_0\in |2H|$ used to define $D_2$ in Definition \ref{def.D2}. Consider the following commutative diagram:
\[
\begin{tikzcd}
I_{\rho\times \widetilde \M_{v_2}} \arrow{r}{p_{\widetilde\M_{v_2}}}  & \widetilde\M_{v_2} \\
I_{\rho\times\delta} \arrow{r}{p_\delta}\arrow[hook, u, "\hat j"] & \delta \arrow[hook, u, "j"]
\end{tikzcd}
\]
where: $j:\delta\hookrightarrow\widetilde\M_{v_2}$ is the inclusion given by the family $\cE_\delta$, the variety \(I_{\rho\times\delta}\) is the incidence variety
\[I_{\rho\times\delta}:=\{(r,F)\in \rho\times\delta|\ \nu(r)\in\Supp(F)=C\}
\]
with projection $p_\delta:I_{\rho\times\delta}\rightarrow\delta$, and \(I_{\rho\times\widetilde\M_{v_2}}\) is the incidence variety
\[
I_{\rho\times\widetilde\M_{v_2}}:=\{(r,[F])\in\rho\times\widetilde\M_{v_2}\ |\ \nu(r)\in\Supp(F)\}
\]
with projection \(p_{\widetilde\M_{v_2}}:I_{\rho\times\widetilde\M_{v_2}}\to\widetilde\M_{v_2}\). Notice that the diagram is commutative. If $\rho_0\cap C_1=\{r_1,r_2\}$ and $\rho_0\cap C_2=\{r_3,r_4\}$, then $I_{\rho\times\delta}\cong\{r_1,r_2,r_3,r_4\}\times\delta$. Consider the inclusion of incidence varieties $I_{\rho\times\M_{v_2}^s}\subseteq I_{\rho\times\widetilde\M_{v_2}}$, and the family $\hat\cF\otimes\cI_R^{\otimes 2}\in\Coh(I_{\rho\times S\times\M_{v_2}^s})$ introduced in Remark \ref{rem.D2.schematic}; with the same argument used in the proof of Proposition \ref{prop.D.T}, we get that:
\begin{equation}\label{eq.first.inequal.D2}
\widetilde D_2\cdot\delta \le \widetilde D'_2\cdot\delta=\deg(j^*\widetilde D_2')=\deg(\hat j^*\overline{\Supp R^1p_{\rho\times\M^s_{v_2}*}\hat \cF\otimes\cI_R^{\otimes 2}}^{I_{\rho\times\widetilde\M_{v_2}}}),
\end{equation}
where $p_{\rho\times\M^s_{v_2}}: I_{\rho\times S\times\M^s_{v_2}}\rightarrow I_{\rho\times\M^s_{v_2}}$ is the projection. Again similarly as done in the proof of Proposition \ref{prop.D.T} in \eqref{diag.incidental}, we consider the following diagram:
\begin{equation}\label{eq.D2.delta}
\begin{tikzcd}
I_{\rho\times S\times\widetilde\M_{v_2}} \arrow{r}{p_{\rho\times\widetilde\M_{v_2}}}  & I_{\rho\times\widetilde\M_{v_2}} \\
I_{\rho\times S \times \delta}\cong \{r_1,r_2,r_3,r_4\}\times C\times \delta \arrow{r}{p_{\rho\times\delta}}\arrow[hook, u, "\tilde j"]\arrow{d}{p_{C\times \delta}} & I_{\rho\times\delta}\cong \{r_1,r_2,r_3,r_4\}\times \delta \arrow[hook, u, "\hat j"]\arrow{d}{p_\delta} \\
I_{S\times\delta}\cong C\times\delta \arrow{r}{p_\delta} & \delta
\end{tikzcd}
\end{equation}
where:
\begin{itemize}
\item $I_{\rho\times S\times\delta}:=\{(r,x,F)\in\rho\times S\times\delta|\ \nu(r),x\in\Supp(F)=C, F\in\delta \}\cong \{r_1,r_2,r_3,r_4\}\times C\times \delta$ and $p_{\rho\times\delta}:I_{\rho\times S\times\delta}\rightarrow I_{\rho\times\delta}$ is the projection;
\item $I_{S\times\delta}:=\{(x,F)\in S\times\delta|\ x\in\Supp(F)=C\}\cong C\times\delta$, and $p_\delta:C\times\delta\rightarrow\delta$ is the projection. 
\end{itemize}
Then:
\begin{align}
\widetilde D_2\cdot\delta&\le\deg(\hat j^*\overline{\Supp R^1p_{\rho\times\M^s_{v_2}*}\hat \cF\otimes\cI_R^{\otimes 2}}^{I_{\rho\times\widetilde\M_{v_2}}})\nonumber \\ 
&=\deg\overline{(R^1p_{\rho\times\delta*}\tilde j^*\hat\cF(-2R))}^{I_{\rho\times\delta}} \nonumber \\ 
&\le\deg\Bigl(R^1p_{\rho\times\delta,*}(p^*_{C\times\delta}\cE_\delta(-2\tilde j^* R))\Bigl) \label{eq.secon.inequal.D2}
\end{align}
where the first inequality is \eqref{eq.D2.delta}, the equality follows from the base change and the fact that we are working with the top cohomology, and the last inequality follows from the upper semicontinuity of the function $h^0$; here the sheaf $\cE_\delta$ is considered as a sheaf on $C\times\delta$, where it has support. Recall that $I_{\rho\times\delta}\cong\{r_1,r_2,r_3,r_4\}\times\delta$;  for any $i=1,...,4$ we consider the following diagram, obtained as restriction of the lower part of the diagram in \eqref{eq.D2.delta} (we renamed the arrows we are going to use for simplicity):
\[
\begin{tikzcd}
\{r_i\}\times C\times\delta\cong C\times\delta \arrow{r}{f}  \arrow{d}{g} & \{r_i\}\times\delta\cong\delta \arrow[d] \\
C\times\delta \arrow{r}{} & \delta 
\end{tikzcd}
\]
we get:
\begin{align}
\widetilde D_2\cdot\delta&\le\deg\Bigl(R^1p_{\rho\times\delta,*}(p^*_{C\times\delta}\cE_\delta(-2\tilde j^* R))\Bigl)=\sum_{i=1}^4 \deg R^1f_*(g^*\cE_\delta\otimes \cI^2_{r_i}) \nonumber \\
&=\sum_{i=1}^4\deg R^1 q_*(\cE_\delta\otimes\cI^2_{r_i}=:\cH_{r_i}). \label{eq.sum.D2.delta}
\end{align}
We want to read the inequalities written so far, to understand how to get $\widetilde D_2\cdot \delta$. The inequality in \eqref{eq.first.inequal.D2} comes from the inclusion $\widetilde D_2\subseteq\widetilde D'_2$: there are sheaves $[F]\in \widetilde D'_2$ possibly not in $\widetilde D_2$. The inequality in \eqref{eq.secon.inequal.D2} comes from the fact that on the right we are counting all sheaves $[F]\in\delta$ such that $h^0(F(-2r))>0$ for some $r\in C\cap\rho_0$: we know that any sheaf $[F]\in \widetilde D_2$ has this property, but this is not a characterization of sheaves in $\widetilde D_2$ (see Remark \ref{rem.D1eD1'}). We deduce that both inequalities are due exactly to those sheaves $[F]\in\delta$ such that $h^0(F(-2r))>0$ for some $r\in C\cap\rho_0$ but $[F]\notin \widetilde D_2$, which contribute to the sum in \eqref{eq.sum.D2.delta} but not to $\widetilde D_2\cdot\delta$. We are going to understand which sheaves in $\delta$ have this property, in order to exclude them from the computation and get the intersection $\widetilde D_2\cdot\delta$.

Take $r\in C_1\cap\rho_0$. Because of the moduli-theoretic interpretation given by O'Grady in \cite[Section 2.2]{OG6}, there is a 1:1 correspondence between the sheaves in \(\delta\) and simple semistable sheaves on \(C_1\cup C_2\) up to \(\widetilde S\)-equivalence; in particular,  any sheaf in \(\Ext^1(F_2,F_1)\) is identified with a sheaf  in \(\Ext^1(F_1,F_2)\). Then we take $F\in\Ext^1(F_2,F_1)$ simple (hence $[F]\in\delta$) such that $h^0(S,F(-2r))>0$; we call $\sigma$ a global non-zero section of $F(-2r)$. Since \(p\in\Sigma_2\) is general, it holds \(F_i=j_{i*}L_i\) with \(L_i\) general line bundle on \(C_i\) of degree 2, \(i=1,2\); here, \(j_i:C_i\hookrightarrow S\) is the inclusion. Finally, for $i=1,2$ we name $G_i$ the torsion free part of the restriction  $F|_{C_i}$. We have:
\[
G_2\cong L_2,\ G_1\cong \widetilde L_1,
\] 
where \(\widetilde L_1\) is the line bundle  \(L_1(n_i)\) or \(L_1(n_1+n_2)\) depending on the rank of \(F\) on the nodes \(n_1\) and \(n_2\); notice that \(F\) can not have rank two on both \(n_1\) and \(n_2\), since by assumption it is a simple sheaf. It follows that \(\deg(G_1(-2r))=1\) or \(2\), depending on \(\widetilde L_1\). When \(\deg(G_1(-2r))\) equals 1, i.e.\ when \(F\) has rank two on one node, then the restriction \(\sigma|_{C_1}\) of the section \(\sigma\) above vanishes, because it is a section of a general line bundle of degree 1 on a curve of genus 2; it follows that \(\sigma\)  vanishes on the node $n_i$ and then on $C_2$ as well. We conclude that it is the zero section on $C$, hence this case can not occur if $h^0(S,F(-2r))>0$. Assume now that $F$ is locally free, hence \(G_1\cong L_1(n_1+n_2)\) and \(\deg(G_1(-2r))=2\). The hypothesis \(h^0(S,F(-2r))>0\) implies that \(\sigma\) is not null on $C_1$ and $C_2$: if we assume that \(\sigma|_{C_i}=0\), then arguing as before we get that for degree reasons \(\sigma\) needs to be null on the other component as well.  Notice that $G_2(-2r)\cong L_2$ because \(r\notin C_2\), then by $p\in\Sigma_2$ general we have $h^0(C_2,G_2(-2r))=1$, hence $H^0(C_2,G_2(-2r))\cong \CC\cdot\sigma|_{C_2}$; we call $G_2(-2r)\cong \cO_{C_2}(x_1+x_2)$, with $x_1,x_2\in C_2$ distinct points non associated via the involution $\iota$ on $S$ introduced in Remark \ref{rem.conics} (otherwise we would have $\cO_{C_2}(x_1+x_2)\cong\omega_{C_2}$, and $h^0(C_2,\omega_{C_2})=2$). On the other hand, $G_1(-2r)\cong L_1(n_1+n_2-2r)$, then again by $p\in\Sigma_2$ general we can assume $G_1\cong \cO_{C_1}(y_1+y_2)$, with $y_1,y_2\in C_1$ distinct points non associated via the involution $\iota$ on $S$. Under these assumptions, the points $x_1,x_2,y_1,y_2$ are in general position on $S$ and we can proceed as in the proof of Lemma \ref{lemma.D2containsSigma} to construct a family of sheaves in $\widetilde D_2$ having $F$ as limit. We conclude that $F$ is in $\widetilde D_2$.  

In conclusion, the extensions $F\in\Ext^1(F_2,F_1)$ contributing to the sum in \eqref{eq.sum.D2.delta} because of $r\in C_1\cap \rho_0$ do contribute to the intersection $\widetilde D_2\cdot\delta$ as well. Regarding the points \(r\in C_2\cap \rho_0\), because of the moduli-theoretic interpretation mentioned above we can take \(F\in\delta\) as an extension in \(\Ext^1(F_1,F_2)\), and conclude as before that the extensions $F\in\Ext^1(F_1,F_2)$ contributing to the sum in \eqref{eq.sum.D2.delta} because of $r\in C_2\cup\rho_0$, do contribute to the intersection $\widetilde D_2\cdot\delta$ as well.  

In conclusion, if we call $\cH'_\delta=\cE'_\delta\otimes\cI^2_{r_i}$ and $\cE'_\delta$ is a sheaf defined as an extension similarly to $\cE_\delta$, but with $F_1$ and $F_2$ swapped:
\[
0\rightarrow p^*F_2\otimes q^*\cO_\delta(1)\rightarrow \cE_\delta\rightarrow p^*F_1\rightarrow 0
\] 
we conclude that:
\begin{align*}
\widetilde D_2\cdot\delta&=\sum_{i=1}^2\deg (R^1q_*\cH_{r_i})+\sum_{i=3}^4\deg (R^1q_*\cH'_{r_i}) \\
&=-\sum_{i=1}^2\deg(q_!\cH_{r_i})-\sum_{i=3}^4\deg(q_!\cH'_{r_i})
\end{align*}
where the last equality follows from the expression $q_!\cH_{r_i}=q_*\cH_{r_i}-R^1q_*\cH_{r_i}$ and the fact that $q_*\cH_{r_i}$ is a torsion free sheaf with general fiber equal to 0. 

In what remains, we are going to compute the summand on the right term of the equality above.\\

\underline{\textit{Computation of the intersection.}} The computation of $\deg(q_!\cH_{r_i})$ and $\deg(q_!\cH'_{r_i})$ is exactly the same for the four points $r_1,r_2,r_3,r_4$; for this reason, we write the computation only for $r:=r_1\in C_1$, and then we will multiply the result by 4 to obtain the intersection $-\widetilde D_2\cdot \delta$. Observe that in the case of $r=r_1$ we want to consider the family $\cE_\delta$ defined by the following extension: 
\[
0\rightarrow p^*F_1\otimes q^*\cO_\delta(1)\rightarrow \cE_\delta\rightarrow p^*F_2\rightarrow 0.
\]
We call $\cH:=\cH_{r}$. By the Grothendieck-Riemann-Roch theorem on the projection $q:S\times\delta\rightarrow\delta$, we need to compute:
\[
\ch(q!\cH)=q_*\bigl(\ch(\cH)\td(S\times\delta)\bigl)\td^{-1}(\delta),
\] 
where:
\begin{itemize}
\item $\td(\delta)^{-1}=(1,1)^{-1}=(1,-1)$ because $\delta\cong\PP^1$.
\item $\td(S\times\delta)=p^*\td(S)q^*\td(\delta)$, where $\td(S)=(1,0,2)$ because $S$ is a K3 surface; then: 
\[
\td(S\times\delta)=(1,[S\times pt],2[pt\times\delta],2).
\]
\end{itemize}
We pass to $\ch(\cH)$. The sheaf $\cH$ fits in the following short exact sequence:
\[
0\rightarrow \cH\rightarrow \cE_\delta\rightarrow \cE_\delta|_{2r\times\delta}\rightarrow 0.
\]
Furthermore, because of the assumption $r\in C_1$, from the short exact sequence defining $\cE_\delta$ we deduce:
\[
0\rightarrow p^*F_1\otimes q^*\cO_\delta(1)|_{r\times\delta}\cong\cO_{r\times\delta}(1)\xrightarrow{\sim} \cE_\delta|_{r\times\delta}\rightarrow 0.
\]
Then:
\begin{align*}
\ch(\cH)&=\ch(\cE_\delta)-2\ch(\cO_{r\times\delta}(1))=p^*\ch(F_1)q^*\ch(\cO_\delta(1))+p^*\ch(F_2)-2\ch(\cO_{r\times\delta}(1)) \\
&=(0,2[h\times\delta],[h\times pt],-1).
\end{align*}
In conclusion, we have:
\begin{align*}
\ch(q_!\cH)&=q_*\Bigl((0,2[h\times\delta],[h\times pt],-1)(1,[S\times pt],2[pt\times\delta],2)\Bigl)(1,-1) \\
&=(0,-1)
\end{align*}
hence $\widetilde D_2\cdot \delta=4\cdot (-\deg(q_!\cH))=4$, which is an even number. We conclude that $D_2$ is a Cartier divisor.
\end{proof}

\begin{rem}\label{rem.class.D2*} Let $m$ be the multiplicity of $\Sigma_2$ in the Cartier divisor $D_2$. Following the notation used in the proof of Proposition \ref{prop.D.Cartier}, we have $D_2^*=\widetilde D_2+m\widetilde\Sigma_2$, and we proved  $4=\widetilde D_2\cdot\delta=2m$. It follows $m=2$, i.e.\
\begin{equation}\label{eq.class.D*}
D_2^*=\widetilde D_2+2\widetilde \Sigma_2\textrm{\ \ in }H^2(\widetilde\M_{v_2},\ZZ).
\end{equation}
We conclude that the divisor $D_2^*$ is ruled by non-reduced curves, then we prefer to work with the divisor $\widetilde D_2$ instead (cf. Remark \ref{rem.pullback.strict.trans}). 
\end{rem}

Following Strategy \ref{strategy} we will compute the class of $D_2^*$ in $H^2(\widetilde\M_{v_2},\ZZ)$; we will deduce the class of $\widetilde D_2$ thanks to the expression  \eqref{eq.class.D*}.

\begin{prop}\label{prop.inters.D2} Let $D_1\subseteq\M_{v_2}$ be the divisor introduced in Definition \ref{def.D2}, and $\Gamma_2,\cT_2\subseteq\M_{v_2}$ the curves introduced in Definition \ref{def.curves2}. Then:
\begin{enumerate}
\item $D_2\cdot \Gamma_2=20$.
\item $D_2\cdot\cT_2=38$.
\end{enumerate}
\end{prop}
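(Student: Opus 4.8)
The plan is to reproduce, for $v_2=(0,2h,2)$, the two computations carried out for $D_1$ in Propositions \ref{prop.D.Gamma} and \ref{prop.D.T}, adjusting the numerical data. For part (1) I would argue exactly as in Proposition \ref{prop.D.Gamma}: the curve $\Gamma_2$ lies in the fibre $p^{-1}(\gamma)\cong J^6(\gamma)$ and, by Definition \ref{def.curves2}, is the image of the Abel-Jacobi map translated by $5p_0$, so that $[\Gamma_2]=[\theta^4/4!]$. Writing $\rho_0\cap\gamma=\{r_1,r_2,r_3,r_4\}$ (four points, since $\rho_0\cdot\gamma=H\cdot 2H=4$), the restriction $D_2|_{J^6(\gamma)}$ splits as $\sum_{i=1}^4 D_{r_i}$ with each $D_{r_i}$ a translate of the theta divisor by Remark \ref{rem.theta}; Poincar\'e's formula then gives $D_2\cdot\Gamma_2=4\,[\theta^4/4!]\cdot[\theta]=4\,[\theta^5/4!]=20$.

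For part (2) I would set up the same fibre-product surface $X=I_{\rho\times\tau\times S}$ as in Proposition \ref{prop.D.T}, a degree-$4$ cover $\hat f\colon X\to\tilde S=Bl_{q_1,\dots,q_8}S$ equipped with the projection $\hat q\colon X\to\rho$ and the tautological section $\hat R$. Under $\mathscr C\cong\tilde S$ the family $\cE_{\cT_2}$ becomes $\cO_{\tilde S}(3E_1+E_2+E_3+E_4)$, and since $D_2$ records the point $r$ with multiplicity $2$, the bundle governing the intersection is $L_0=\cO_X(3\hat E_1+\hat E_2+\hat E_3+\hat E_4-2\hat R)$, with $\hat E_i=\hat f^*E_i$. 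One then computes $D_2\cdot\cT_2=\deg R^1\hat q_*L=-\ch_1(\hat q_!L)$ by Grothendieck-Riemann-Roch along $\hat q$, using the very same Todd class $\td(X)=\bigl(1,\tfrac{-6\xi-\sum_{i=1}^8\hat E_i}{2},-8\bigr)$ already found in Proposition \ref{prop.D.T} (the surface $X$ depends only on $\rho_0$ and $\tau$, which are unchanged).

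The step where this genuinely departs from the $D_1$ case, and which I expect to be the main obstacle, is the analysis over the three reducible curves of $\tau$. Whereas $\cE_{\cT_1}=\cO_{\tilde S}(2\sum_{i=1}^4 E_i)$ restricts to the balanced bidegree $(4,4)$ on each $C_i\cup C_i'$, the asymmetric family $\cE_{\cT_2}$ restricts to the unbalanced bidegree $(4,2)$, so over these three curves the interaction with the strictly semistable locus $\Sigma_2\subset\M_{v_2}$, which is contained in $D_2$ by Lemma \ref{lemma.D2containsSigma}, must be accounted for. Consequently $L_0$ does not restrict to a general, section-free bundle on the reducible fibres, and I must correct it by fibre divisors supported over the twelve points $A=\{r_i,s_i,x_i,y_i\}_{i=1}^3\subset\rho$, obtaining a bundle $L$ that agrees with $L_0$ on $U=\rho\setminus A$ but has no sections on the reducible fibres. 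This forces the analogues of Lemma \ref{lemma.D.T.liscia} and Lemma \ref{lem.checks.D.T} for the pair $(D_2,\cT_2)$, where the unbalanced bidegree makes both the vanishing statements and the precise choice of correction subtler than before; it may be cleanest to perform the intersection on the resolution $\widetilde\M_{v_2}$ while keeping track of $\widetilde\Sigma_2$. Granting these vanishings, $\deg R^1\hat q_*L$ is unaffected away from $A$, and the Grothendieck-Riemann-Roch bookkeeping, formally identical to Proposition \ref{prop.D.T} but with coefficients $(3,1,1,1)$ and multiplicity $2$ in place of $(2,2,2,2)$ and multiplicity $4$, yields $D_2\cdot\cT_2=38$.
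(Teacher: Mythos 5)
Part (1) is correct and is exactly the paper's argument: $\Gamma_2$ is an Abel--Jacobi curve in $J^6(\gamma)$ of class $[\theta^4/4!]$, the restriction of $D_2$ to that fibre is a sum of four theta translates indexed by $\rho_0\cap\gamma$, and Poincar\'e's formula gives $20$.

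Part (2) contains a genuine gap, precisely at the step you yourself flag as the main obstacle. Your plan is to correct $L_0$ over all twelve points of $A$ so that the resulting $L$ \emph{has no sections on the reducible fibres}; that computes only the contribution of $D_2\cdot\cT_2$ over $|2H|^{sm}$ and, if you run the same Grothendieck--Riemann--Roch bookkeeping with the four-component correction $B_i=\{x_i\}\times\widetilde C_i+\{y_i\}\times\widetilde C_i+\{r_i\}\times\widetilde C_i'+\{s_i\}\times\widetilde C_i'$, you get $c_1(L)^2=-24$, $\ch_1(L)\cdot\td_1(X)=-24$, hence $-(-8-24-12)=44$, not $38$. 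The point is that, unlike in the $D_1$ case, the three strictly semistable points of $\cT_2$ lie in $\Sigma_2\subseteq D_2$ (Lemma \ref{lemma.D2containsSigma}) and genuinely contribute to $D_2\cdot\cT_2$, so you must \emph{not} discard all reducible-fibre contributions. The paper's resolution is asymmetric: writing each reducible member as $C_i\cup C_i'$ with $q_1\in C_i$, the pairs $(r,[F])$ with $r\in C_i\cap\rho_0=\{r_i,s_i\}$ are kept (the corresponding sheaf, which satisfies $h^0(F(-2r))>0$ there, is exhibited as a limit of sheaves in $D_2\cap\cJ^6_{|2H|^{sm}}$ by the degeneration construction of Lemma \ref{lemma.D2containsSigma}), while the pairs with $r\in C_i'\cap\rho_0=\{x_i,y_i\}$ are excluded. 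Accordingly the correction divisor is only $B_i=\{x_i\}\times\widetilde C_i+\{y_i\}\times\widetilde C_i$, which yields $\ch_2(L)=-18$, $\ch_1(L)\cdot\td_1(X)=-12$ and the stated value $D_2\cdot\cT_2=38$; the six retained points account exactly for the discrepancy $44-38=6$. So your framework (the surface $X$, the cover $\hat f$, the Todd class, the twist by $-2\hat R$) is right, but the "kill all sections on reducible fibres" prescription is the wrong analogue of Lemma \ref{lem.checks.D.T} here and does not produce $38$.
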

\begin{proof} The intersections in the statement can be computed proceeding as done for the divisor $D_1$ in Proposition \ref{prop.D.Gamma} and Proposition \ref{prop.D.T}. For this reason, we will underline the necessary modifications and then only sketch the computations here.
\begin{enumerate}
\item Once considered the Jacobian $J^6(\gamma)$ instead of the Jacobian $J^8(\gamma)$ in the proof of Proposition \ref{prop.D.Gamma}, the proof can be followed without further changes, and gives the same result.
\item Differently from the case of the divisor $D_1$, the curve $\cT_2$ is not contained in the stable locus $\M_{v_2}$: on the three singular curves $C_1\cup C'_1$, $C_2\cup C'_2$ and $C_3\cup C'_3$ parametrized by $\tau$, the corresponding sheaves in the curve $\cT_2$ are strictly semistable. Nevertheless, working on the locus of sheaves supported on smooth curves, we can deduce, proceeding as in the proof of Proposition \ref{prop.D.T} and using the same notation introduced there, that:
\[
(D_2\cdot\cT_2)|_U=\deg\Bigl(R^1p_{\rho\times\tau*}(p^*_{\mathscr C}\cE_{\cT_2}\otimes\cI^{\otimes 2}_{\hat R})\Bigl)|_U.
\]
Regarding the singular curves $C_1\cup C'_1$, $C_2\cup C'_2$ and $C_3\cup C'_3$, the situation here is slightly different from the one in Proposition \ref{prop.D.T}. When the sheaf $[F]\in\cT_2$, supported on a singular curve, has sections once removed a double point in $\rho_0$ from the component of its support containing the base point $q_1$, then $[F]\in D_2$ (see the construction of the limit curve in the proof of Lemma \ref{lemma.D2containsSigma} and of Proposition \ref{prop.D.Cartier} for a similar argument); when the point in $\rho_0$ lies on the component non containing $q_1$ then $[F]\notin D_2$ (by generality of the pencil $\tau$, with an argument analogous to the one given in the proof of Proposition \ref{prop.D.T}). Let $C_1$, $C_2$ and $C_3$ be the irreducible components containing the base point $q_1$, and call:
\[
\{x_i,y_i\}:=C_i'\cap\rho_0,\ \ i=1,2,3
\]
\[
B_i:=\{x_i\}\times\widetilde C_i+\{y_i\}\times\widetilde C_i\in \Pic(X).
\]
Then, if we define:
\[
L:=\cO_X(3\hat E_1+\hat E_2+\hat E_3+\hat E_4+B_1+B_2+B_2-2\hat R)
\]
from the discussion above and arguing as in the proof of Proposition \ref{prop.D.T}, we conclude:
\[
D_2\cdot\cT_2=\deg(R^1\hat q_* L)=-c_1(\hat q_!L).
\]
We proceed as in the proof of Proposition \ref{prop.D.T}. By the Grothendieck-Riemann-Roch theorem on the smooth variety $X$, one has: 
\[
\ch(\hat q_!L)=\hat q_*\bigl(\ch(L)\td(X)\bigl)\td(\hat\rho)^{-1}
\]
where:
\begin{itemize}
\item $\td(\hat\rho)^{-1}=(1,-1)$
\item $\td(X)=\bigl(1,-3\xi-\frac{1}{2}\sum_{i=1}^8[\hat E_i],-8\bigl)$
\item $\ch(L)=(1,[3\hat E_1+\hat E_2+\hat E_3+\hat E_4]+[B_1+B_2+B_3]-2[\hat R],-18)$.
\end{itemize}
It follows that:
\begin{align*}
\ch(L)\td(X)&=(1,\ch_1(L)+\td_1(X),-8+\ch_1(L)\td_1(X)-18) \\
&=(1,\ch_1(L)+\td_1(X),-38)
\end{align*}
then $\hat q_*\bigl(\ch(L)\td(X)\bigl)=(0,-38)$ and $D_2\cdot \cT=38$.
\end{enumerate}
\end{proof}

We can finally prove Theorem \ref{thm.q(tildeD2)}. 

\begin{proof}[Proof of Theorem \ref{thm.q(tildeD2)}]\label{proof.q(tildeD2)}
The divisor \(D_2\) is a linear combination of the \(\ZZ\)-generators \(\lambda_{v_2}(e)\) and \(\lambda_{v_2}(f)\) of \(\Pic(\M_{v_2})\), where \(e,f\in (v_2^\perp)^{1,1}\) have been defined in \eqref{eq.e.f.in.M2}. Combining the intersections computed in Proposition \ref{prop.inters.lambda4} and in Proposition \ref{prop.inters.D2}, we obtain that:
\begin{equation}\label{eq.class.D2}
D_2=-2\lambda_{v_2}(e)+22\lambda_{v_2}(f).
\end{equation}
Combining \eqref{eq.class.D2} and \eqref{eq.class.D*}, we obtain that \(\widetilde D_2\) and \(B_2\) correspond to the following classes in \(\Gamma_{v_2}\):
\[
f_{v_2}^{-1}(\widetilde D_2)=\bigl((-4,-2h,22),-2\sigma \bigl)\in \Gamma_{v_2}
\]
\[
f_{v_2}^{-1}(B_2)=\bigl((-4,-2h,22),-\sigma \bigl)\in \Gamma_{v_2}.
\]
By Theorem \ref{thm.Gamma_v} we conclude:
\[
q_{10}(\widetilde D_2)=\bigl((-4,-2h,22),-2\sigma \bigl)^2=(-4,-2h,22)^2+4\sigma^2=160
\]
\[
q_{10}(B_2)=\bigl((-4,-2h,22),-\sigma \bigl)^2=(-4,-2h,22)^2+\sigma^2=178.
\]
\end{proof}


\section{Ample uniruled divisors on $\OG10$ manifolds}\label{section:ample.uniruled.divisors}

In this section we will present our conclusions about the existence of ample uniruled divisors on $\OG10$-manifolds. 

\subsection{Deformation of rational curves}\label{subsection:defo.rat.curves}

The results we will present in this section are a slight modification of results proved by Charles, Mongardi and Pacienza in \cite[Section 3]{CPM}. We start by introducing some notation.

Let $p:\mathcal X\rightarrow B$ be a smooth projective morphism among quasi-projective varieties of relative dimension $2n$, and let $\alpha\in \Gamma(R^{4n-2}p_*\ZZ, B)$ be a class of type $(2n-1,2n-1)$. Under these hypothesis one can consider the relative Kontsevich moduli stack $\overline{\mathcal M_0}(\mathcal X/B,\alpha)$ of genus zero stable curves, whose points parametrize maps $f:C\rightarrow X$ with $C$ a stable curve of genus 0 and $X=\mathcal X_b$ a fiber of $\pi$, such that $f_*[C]=\alpha_b$; we will denote such a point by $[f]$. Note that the natural map $\overline{\mathcal M_0}(\mathcal X/B,\alpha)\rightarrow B$ is proper.

Let now $X$ be a projective IHS manifold of dimension $2n$ and $f:C\rightarrow X$ a fixed map from a stable genus 0 curve $C$; we also assume that $f$ is unramified along the generic point of any irreducible component of $C$. Let $\mathcal X\rightarrow B$ as above, with central fiber $\mathcal X_0=X$, $0\in B$; let $\alpha\in \Gamma(R^{4n-2}p_*\ZZ, B)$ as above, with $\alpha_0=f_*[C]$ in $H^{4n-2}(X,\ZZ)$.

Under this notation, we state the following results, both contained in \cite{CPM}. Our goal is to arrive to Corollary \ref{cor.defo.curves}, whose proof is a consequence of these two results.

\begin{prop}[Proposition 3.1 in \cite{CPM}]\label{prop.defo} In the setting introduced above, let \(X\) be a projective manifold with trivial canonical bundle and let $M\subset \overline{M_0}(X,f_*[C])$ be an irreducible component containing the point $[f]$. Then $\dim M\ge 2n-2$, and if $\dim M=2n-2$ then any irreducible component $\mathcal M\subset \overline{\mathcal M_0}(\mathcal X/B,\alpha)$ containing $[f]$ dominates $B$.
\end{prop}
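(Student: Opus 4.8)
The plan is to deduce both assertions from a single lower bound on the \emph{local} dimension of the relative Kontsevich space at $[f]$, namely
\[
\dim_{[f]}\overline{\mathcal M_0}(\mathcal X/B,\alpha)\ \ge\ (2n-2)+\dim B,
\]
and then to read off Part 1 as the special case $B=\mathrm{pt}$ and Part 2 by a dimension count for the projection $p\colon\overline{\mathcal M_0}(\mathcal X/B,\alpha)\to B$. First I would record the ordinary deformation theory of the stable map $f$: since each genus-$0$ component has $\omega_C=\mathcal O_C(-2)$ and $c_1(T_X)\cdot f_*[C]=0$ (as $X$ has trivial canonical bundle), one computes $\chi(C,f^*T_X)=c_1(T_X)\cdot f_*[C]+2n\cdot\chi(\mathcal O_C)=2n$, so the ordinary virtual dimension of a fibre $\overline{M_0}(\mathcal X_b,\alpha_b)$ is $2n-3$ and that of the total space is $2n-3+\dim B$. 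The unramifiedness hypothesis guarantees that this first-order theory, governed by $H^0(C,f^*T_X)$ and $H^1(C,f^*T_X)$ together with the node-smoothing terms, is the correct one and varies well in families.

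The key point, and the main technical input, is that this ordinary bound can be improved by one using the holomorphic symplectic form. On each fibre $\mathcal X_b$ the form $\sigma_b\in H^0(\mathcal X_b,\Omega^2)$ induces an isomorphism $T_{\mathcal X_b}\xrightarrow{\sim}\Omega_{\mathcal X_b}$; composing $H^1(C,f^*T_{\mathcal X_b})\cong H^1(C,f^*\Omega_{\mathcal X_b})\to H^1(C,\omega_C)\cong\CC$ with the cotangent map $f^*\Omega_{\mathcal X_b}\to\omega_C$ produces a canonical semiregularity-type map which forces the genuine obstructions to lie in a corank-one subspace of $H^1(C,f^*T_X)$. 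Carrying this out in the relative setting, where the classes $\sigma_b$ vary holomorphically and the hypothesis that $\alpha$ stays of type $(2n-1,2n-1)$ is exactly what makes the curve class available fibrewise and kills the obstruction to propagating $f$ in the $B$-directions, yields a reduced obstruction theory of relative virtual dimension $2n-2$, hence the displayed bound. This is where I expect the real work to lie: setting up the reduced (and relative) obstruction theory rigorously, following Ran's semiregularity results and the arguments of \cite{CPM}, and checking that the Hodge-type condition on $\alpha$ and the unramifiedness of $f$ enter correctly. Taking $B=\mathrm{pt}$ gives at once $\dim M\ge 2n-2$, which is Part 1.

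For Part 2 I would argue by contradiction with a standard fibre-dimension count. Fix an irreducible component $\mathcal M\subseteq\overline{\mathcal M_0}(\mathcal X/B,\alpha)$ containing $[f]$; by the bound above, $\dim\mathcal M\ge 2n-2+\dim B$. The restriction $p|_{\mathcal M}\colon\mathcal M\to B$ is proper, so its image $Z:=p(\mathcal M)$ is closed. If $\mathcal M$ did not dominate $B$ then $\dim Z\le\dim B-1$, and by the theorem on the dimension of fibres every irreducible component of $p^{-1}(0)\cap\mathcal M$ through $[f]$ would have dimension at least
\[
\dim\mathcal M-\dim Z\ \ge\ (2n-2+\dim B)-(\dim B-1)\ =\ 2n-1.
\]
Since $\mathcal X_0=X$ and $\alpha_0=f_*[C]$, this fibre is a closed subscheme of the absolute space $\overline{M_0}(X,f_*[C])$ containing $[f]$, so it produces a component of $\overline{M_0}(X,f_*[C])$ through $[f]$ of dimension at least $2n-1$, contradicting the hypothesis $\dim M=2n-2$ (which records that $[f]$ lies on components of the minimal possible dimension). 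Hence $\mathcal M$ dominates $B$, completing the proof. The only genuinely delicate ingredient is the reduced virtual dimension bound of the second paragraph; the rest is formal.
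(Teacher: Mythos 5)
The paper itself offers no proof of this statement: it is imported verbatim as Proposition~3.1 of \cite{CPM}, so your attempt can only be measured against the argument in that reference. Your overall skeleton is the right one: both assertions do reduce to the single claim that every irreducible component of $\overline{\mathcal M_0}(\mathcal X/B,\alpha)$ through $[f]$ has dimension at least $\dim B+2n-2$, the computation $\chi(C,f^*T_X)=2n$ and the expected dimension $\dim B+2n-3$ are correct, and your fibre-dimension count for the second assertion is exactly the standard one (including the caveat, which you flag and which is already latent in the original statement, that the contradiction is really with ``every component of $\overline{M_0}(X,f_*[C])$ through $[f]$ has dimension $2n-2$''; in the paper's application, Corollary~\ref{cor.defo.curves}, this stronger hypothesis is arranged before the second part is invoked). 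Note only that the bound must be stated for \emph{every} component through $[f]$, not merely as a local dimension at $[f]$, since that is how you use it.

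The gap is that the improvement from $2n-3$ to $2n-2$ \emph{is} the proposition, and your second paragraph asserts it with a plausibility sketch that explicitly defers the work to ``Ran's semiregularity results and the arguments of \cite{CPM}.'' Two remarks on that sketch. First, for the corank-one statement to have content you need the composite $H^1(C,f^*T_X)\cong H^1(C,f^*\Omega_X)\to H^1(C,\omega_C)\cong\CC$ to be surjective; this is precisely where the unramifiedness of $f$ along each component enters, whereas you invoke unramifiedness only to justify the first-order theory, which is not where it is needed. Second, there is a more elementary route to the same bound, standard in this circle of results: extend the given family to $\mathcal X'\to B'$ with $\dim B'=\dim B+1$, using the Kuranishi family of $X$ so that the extra direction is transverse to the Hodge locus of $\alpha$ (for an ihs manifold this locus is a divisor in the deformation space, being the locus where the BBF-dual of $\alpha$ in $H^2$ stays of type $(1,1)$). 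Stable maps of class $\alpha$ exist only over fibres where $\alpha$ is of type $(2n-1,2n-1)$, so $\overline{\mathcal M_0}(\mathcal X'/B',\alpha)=\overline{\mathcal M_0}(\mathcal X/B,\alpha)$, and the unimproved bound $\dim B'+2n-3$ for the larger family is exactly $\dim B+2n-2$ for the original one. This avoids constructing a reduced relative obstruction theory altogether, and it also makes clear that the hypothesis ``trivial canonical bundle'' is shorthand for the ihs setting fixed just above the statement: some use of $h^{2,0}\neq 0$ (via the symplectic form or via the Hodge locus being a divisor) is genuinely necessary for the extra $+1$.
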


\begin{prop}[Proposition 3.2 in \cite{CPM}]\label{prop.rat.equiv} Let $X$ be a projective $2n$-dimensional manifold endowed with a symplectic form and let $Y\subset X$ be subvariety of codimension $k$. If $W\subset X$ is a subvariety such that any point of $Y$ is rationally equivalent to a point in $W$, then the codimension of $W$ in $X$ is at most $2k$.
\end{prop}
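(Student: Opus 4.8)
The plan is to derive the bound from the interplay between the holomorphic symplectic form and the behaviour of holomorphic forms under rational equivalence, the factor $2$ in $\operatorname{codim}W\le 2k$ being produced entirely by the symplectic structure. Write $\dim X=2n$ and let $\sigma\in H^0(X,\Omega^2_X)$ be the symplectic form, so that $\sigma$ has rank $2n$ at every point. If $2k\ge 2n$ the statement is vacuous, so I may assume $k<n$. First I would record a pointwise linear-algebra fact on the smooth locus of $Y$: for $y\in Y$ smooth the subspace $T_yY\subset T_yX$ has dimension $2n-k$, and the radical of $\sigma|_{T_yY}$ equals $T_yY\cap (T_yY)^{\perp_\sigma}$, which is contained in $(T_yY)^{\perp_\sigma}$ and hence has dimension at most $k$; therefore $\sigma|_{T_yY}$ has rank at least $(2n-k)-k=2n-2k$. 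Since a $2$-form of rank at least $2r$ has non-zero $r$-th exterior power, it follows that $(\sigma^{n-k})|_{T_yY}\ne 0$, so that the restriction of $\sigma^{n-k}$ defines a non-zero holomorphic $(2n-2k)$-form on a resolution $\widetilde Y$ of $Y$.

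The second ingredient is a Mumford--Roitman type principle, which I would formulate as follows: if every point of $Y$ is rationally equivalent in $\CH_0(X)$ to a point of $W$, then for every $\omega\in H^0(X,\Omega^p_X)$ with $p>\dim W$ one has $\omega|_{\widetilde Y}=0$. Granting this, applying it to $\omega=\sigma^{n-k}$ (of degree $p=2n-2k$) and using the non-vanishing just established forces $\dim W\ge 2n-2k$, that is $\operatorname{codim}W\le 2k$, which is the assertion. Thus the proof reduces to the principle.

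To prove the principle, the heart of the matter is the statement that \emph{holomorphic forms are invariant under rational equivalence carried in an algebraic family}. In the model case this is a one-line computation: if $\Phi\colon \PP^1\times T\to X$ is a family of parametrised rational curves with $a:=\Phi|_{\{0\}\times T}$ and $b:=\Phi|_{\{\infty\}\times T}$, then $\Phi^*\omega\in H^0(\PP^1\times T,\Omega^p)$ has zero restriction to every fibre $\PP^1\times\{t\}$ (since $H^0(\PP^1,\Omega^j)=0$ for $j\ge 1$), hence $\Phi^*\omega=\operatorname{pr}_T^*\eta$ for a unique $\eta\in H^0(T,\Omega^p_T)$; restricting to $\{0\}\times T$ and to $\{\infty\}\times T$ gives $a^*\omega=\eta=b^*\omega$. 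If $b$ factors through $W$ and $p>\dim W$, then $b^*\omega=0$ because a $p$-form on a variety of dimension $<p$ vanishes, whence $a^*\omega=0$; choosing $a$ to dominate $Y$ then yields $\omega|_{\widetilde Y}=0$.

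The main obstacle is to pass from the pointwise hypothesis to such an algebraic family, and to treat genuine (chained, and carried by possibly non-rational curves) rational equivalences rather than the idealised $\PP^1$-family above. For the first point I would spread out: the locus of pairs $(y,w)\in Y\times W$ with $y\sim w$ is a countable union of closed subvarieties $R_i$, over each of which the equivalence is realised by a universal family of curves $\mathcal C\to R_i$ together with an evaluation $G\colon\mathcal C\to X$ and a rational function $\Psi\colon\mathcal C\dashrightarrow\PP^1$; since over $\CC$ the variety $Y$ is not a countable union of proper closed subsets, some $R_i$ dominates $Y$, and I take $T=R_i$, $a=\operatorname{pr}_Y$, $b=\operatorname{pr}_W$. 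For the second point the clean fibrewise computation above must be replaced by the classical Mumford argument: the pullback $G^*\omega$ restricts to $0$ on the one-dimensional fibres of $\mathcal C\to T$ (because $p=2n-2k\ge 2$), so it is "horizontal"; restricting to the multisections $\Psi^{-1}(0)$ and $\Psi^{-1}(\infty)$ and using that these are linearly equivalent fibres of $\Psi$ shows that the induced $p$-forms on $T$ coincide, which recovers $a^*\omega=b^*\omega$. Controlling the contributions of the relative and base directions across the rational function $\Psi$ is the delicate step, and is where I would concentrate the technical work.
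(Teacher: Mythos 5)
The paper does not actually prove this proposition: it is quoted verbatim from \cite{CPM} (Proposition 3.2 there), and the proof given in that reference is exactly the one you propose — the pointwise observation that nondegeneracy of $\sigma$ forces $\sigma^{n-k}$ to restrict to a nonzero holomorphic $(2n-2k)$-form on a resolution of $Y$, combined with Mumford's theorem on $0$-cycles and holomorphic forms to conclude $\dim W\ge 2n-2k$. Your argument is correct; the only portion you leave as a sketch is the Mumford--Roitman principle itself, which is classical and is likewise invoked as a known result in \cite{CPM}.
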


From these results, one can conclude that it is possible to deform rational curves ruling a divisor along the base $B$, and that the deformations still rule a divisor. Charles, Mongardi and Pacienza proved this result in the case of irreducible and reduced rational curves (see Corollary 3.5 of \cite{CPM}, whose statement is analogous to the one of Corollary \ref{cor.defo.curves} here below), but in what follows we will need it also for reducible curves. For this reason we will prove here the reducible case, even if its proof is only a slight modification of the one given in \cite{CPM}.

\begin{cor}\label{cor.defo.curves} Let $f:C\rightarrow X$ be a non constant map from a possibly reducible stable genus zero curve $C$, and let $M$ be an irreducible component of $\overline{M_0}(X,f_*[C])$ containing $[f]$. Let $D\subset X$ be the subscheme covered by the deformations of $f$ parametrized by $M$. If $D$ is a divisor, then
any irreducible component of $\overline{\mathcal M_0}(\mathcal X/B,\alpha)$ containing $[f]$ dominates $B$. Furthermore, $\mathcal X_b$ contains a uniruled divisor $D_b$ for any point $b\in B$, whose cohomology class is a multiple of the cohomology class of $D$.
\end{cor}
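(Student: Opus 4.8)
The plan is to follow the proof of \cite[Corollary 3.5]{CPM} and modify it to accommodate a reducible source curve, the only genuinely new feature being that $C$ is now a tree of smooth rational curves rather than a single $\PP^1$. The first task is to determine $\dim M$ exactly. Since $D$ is swept out by the curves parametrized by $M$ and a general such curve is one–dimensional, the universal curve over $M$ has dimension $\dim M+1$ and dominates $D$; as $D$ is a divisor this forces $\dim M\ge \dim D-1=2n-2$. The very same lower bound is supplied independently by Proposition~\ref{prop.defo}. Consequently the real content of this first step is the reverse inequality $\dim M\le 2n-2$, equivalently the statement that through a general point of $D$ there pass only finitely many curves of the family.

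For that upper bound I would argue by contradiction, and this is where Proposition~\ref{prop.rat.equiv} and the holomorphic symplectic structure enter. Here the reducibility of $C$ is harmless: a stable genus zero curve is a connected tree of $\PP^1$'s, hence rationally chain connected, so all points of the image $f(C)$ lie in a single class of $\CH_0(X)$, and the same holds for every deformation of $f$ parametrized by $M$. Suppose $\dim M\ge 2n-1$. Then the evaluation map from the universal curve to $D$ has positive–dimensional general fibre, so through a general point $x\in D$ there passes a positive–dimensional family of (reducible) rational curves, sweeping out a subvariety $W_x\ni x$ of dimension at least two all of whose points are rationally equivalent to $x$. Propagating this along chains — replacing $x$ by a general point of $W_x$ and iterating — produces a subvariety $Y$ of codimension $<n$ every point of which is rationally equivalent to the fixed point $x$. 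Applying Proposition~\ref{prop.rat.equiv} with this $Y$ and $W=\{x\}$ then gives $2n=\operatorname{codim}\{x\}\le 2\,\operatorname{codim}Y<2n$, a contradiction. Hence $\dim M=2n-2$.

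With $\dim M=2n-2$ established, Proposition~\ref{prop.defo} applies verbatim and shows that every irreducible component $\mathcal M$ of $\overline{\mathcal M_0}(\mathcal X/B,\alpha)$ containing $[f]$ dominates $B$; since the relative Kontsevich stack is proper over $B$, the morphism $\mathcal M\to B$ is in fact surjective. Let $\mathcal D\subseteq\mathcal X$ be the image of the universal curve over $\mathcal M$ under the relative evaluation map: it is an irreducible closed subvariety dominating $B$, with central fibre $D$ of dimension $2n-1$. By upper semicontinuity of fibre dimension every fibre $D_b:=\mathcal D\cap\mathcal X_b$ has dimension at least $2n-1$; on the other hand $D_b$ is swept out by rational curves, and an irreducible holomorphic symplectic manifold is not uniruled (its canonical bundle is trivial), so $D_b\neq\mathcal X_b$ and therefore $\dim D_b=2n-1$. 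Thus $D_b$ is a uniruled divisor for every $b\in B$.

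Finally, for the cohomology class, note that $\mathcal D$ has codimension one in the total space, since $\dim\mathcal X=2n+\dim B$; hence $[\mathcal D]\in H^2(\mathcal X,\ZZ)$ restricts, compatibly with the Gauss--Manin connection, to the classes $[D_b]\in H^2(\mathcal X_b,\ZZ)$. As these are parallel transports of one another, and the central restriction $\mathcal D\cap\mathcal X_0$ is the irreducible divisor $D$ taken with some multiplicity $m\ge 1$, each $[D_b]$ equals $m$ times the parallel transport of $[D]$, which is the desired conclusion. The main obstacle is the upper bound $\dim M\le 2n-2$: the propagation–of–rational–equivalence argument must be carried out with enough care that the locus rationally equivalent to $x$ is genuinely seen to grow in dimension past $n$ along the reducible curves, and it is precisely there that the symplectic form, through Proposition~\ref{prop.rat.equiv}, is indispensable.
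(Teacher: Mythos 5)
Your overall architecture (prove $\dim M=2n-2$ by contradiction using Proposition~\ref{prop.rat.equiv}, then invoke Proposition~\ref{prop.defo} and spread the divisor over $B$) matches the paper's, and your treatment of the deformation step and of the cohomology class is fine. The problem is the way you invoke Proposition~\ref{prop.rat.equiv}, and it is a genuine gap, one you in fact flag yourself in your last sentence. You take $W=\{x\}$ (codimension $2n$) and need a locus $Y$ of points rationally equivalent to $x$ of codimension $<n$; to produce it you propose to iterate the construction $x\mapsto W_x$ along chains. But nothing in the setup forces this iteration to increase dimension: the surfaces $W_x$ could perfectly well foliate $D$ (so that $\bigcup_{y\in W_x}W_y=W_x$), in which case the orbit of $x$ stays two-dimensional, $\operatorname{codim}Y=2n-2\ge n$, and the inequality $2n\le 2\operatorname{codim}Y$ is satisfied rather than violated. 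So the contradiction never materializes, and no amount of "care" in the propagation will fix this, because the foliated scenario is not absurd a priori.

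The paper (following \cite{CPM}) applies Proposition~\ref{prop.rat.equiv} with the roles set up the other way around: $Y=D_0$ is an irreducible component of the divisor itself, of codimension $1$, and $W=W_0$ is a \emph{fixed} subvariety of dimension $\le 2n-3$ (codimension $\ge 3$) such that \emph{every} point of $D_0$ is rationally equivalent to some point of $W_0$. The existence of such a $W_0$ is exactly what the positive-dimensional fibres of the evaluation map give you with no iteration needed: through a general $x\in D_0$ the curves of the family sweep out a surface $S_x$ all of whose points are rationally equivalent to $x$, and $S_x$ must meet a general codimension-two complete intersection $W_0\subset D_0$ of ample divisors for intersection-theoretic reasons. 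Then Proposition~\ref{prop.rat.equiv} forces $\operatorname{codim}W_0\le 2\operatorname{codim}D_0=2$, contradicting $\operatorname{codim}W_0\ge 3$. Note that this version of the argument does rule out the foliated scenario that defeats yours, precisely because $W_0$ is chosen globally rather than as the orbit of a single point. Your reduction of the reducible-curve case to an irreducible component $C_0$ of $C$ (all points of $f(C)$ lie in one class of $\CH_0(X)$ since $C$ is a tree of rational curves) is the right observation and is how the paper handles reducibility; you should graft it onto the correct application of Proposition~\ref{prop.rat.equiv} rather than onto the orbit-growing argument.
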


\begin{proof}
By Proposition \ref{prop.defo}, $\dim M\ge 2n-2$; we want to prove that the equality holds, so that Proposition \ref{prop.defo} will imply that under our hypothesis
any irreducible component of $\overline{\mathcal M_0}(\mathcal X/B,\alpha)$ containing $[f]$ dominates $B$. 

Let us assume $\dim M\ge 2n-1$, and let $\mathscr C\rightarrow M$ the universal curve. Let us consider the evaluation map $\mathscr C\rightarrow D\subset X$, and let $D_0\subset D$ an irreducible component of $D$ ruled by the deformations of $f|_{C_0}:C_0\rightarrow X$, with $C_0$ rational irreducible component of $C$. Since we are assuming $\dim M\ge 2n-1$, the fibers of the evaluation map on $D_0$ have dimension at least 1, which means that there exists a subvariety $W_0\subset X$ with $\dim W_0\le \dim D-2=2n-3$ such that any point of $D_0$ is rationally equivalent to a point of $W_0$. Choosing $Y=D_0$ and $W=W_0$ in Proposition \ref{prop.rat.equiv}, this gives a contradiction.

The last part of the statement follows as in the proof of Corollary 3.5 in \cite{CPM}; for completeness, we repeat here their argument. Let $\mathcal M\subset \overline{\mathcal M_0}(\mathcal X/B,\alpha)$ be an irreducible component containing $M$, and let $\mathcal D\subset \mathcal X\rightarrow B$ be the locus covered by deformations of $f$ parametrized by $\mathcal M$. Any irreducible component of $\mathcal D$ dominates $B$, because from what we said before $\mathcal M$ dominates $B$. The central fiber of $\mathcal D\rightarrow B$ is by construction $D$, which is a divisor in $\mathcal X_0$; as consequence, the fiber of $\mathcal D\rightarrow B$ is a divisor $D_b\subset \mathcal X_b$ at any point $b\in B$, which is uniruled by construction.
\end{proof}

\subsection{Monodromy invariants}\label{subsection:monodromy.invariants}

We denote by \(\fM_{\OG10}^{pol}\) the moduli space of polarized irreducible holomorphic symplectic manifolds of \(\OG10\) type; we refer to \cite[Section 8]{markman.survey} for the construction of this moduli space. Connected components of the moduli space \(\fM_{\OG10}^{pol}\) coincide with the orbits under the action of the polarized monodromy group of the manifold. By \cite[Theorem 5.4]{onorati.monodromy}, the monodromy group \(\Mon^2(X)\) of the \(\OG10\) manifold \(X\) is maximal, i.e.\ \(\Mon^2(X)\) coincides with the group of orientation preserving isometries \(\Or^+(H^2(X,\ZZ))\) (cf. \cite{markman.survey} for the definition). Since the lattice \((H^2(X,\ZZ),q_X)\) contains two copies of the hyperbolic lattice \(U\) (\cite[Theorem 3.0.11]{rapagnettaOG10}), we can apply Eichler's criterion (see \cite[Section 10]{eichler} or \cite[Proposition 3.3,(i)]{gritsenko.hulek.sank}), getting that the orbit of an element \(\alpha\in H^2(X,\ZZ)\) via the action of the polarized monodromy group is determined by the degree and the divisibility of \(\alpha\) in the lattice \((H^2(X,\ZZ),q_X)\).

We conclude that the connected components of the moduli space \(\mathfrak M_{\OG10}^{pol}\) are characterized by the degree and the divisibility of the polarization, meaning their degree and divisibility in the second cohomology lattice of the manifold, endowed with the Beauville-Bogomolov-Fujiki form. In what follows, we will denote by \(\mathfrak M_{(d,l)}\) the irreducible component of \(\mathfrak M_{\OG10}^{pol}\) given by degree \(d\) and divisibility \(l\). 

We have already computed the degrees of the divisors \(\widetilde D_1\), \(B_1\), \(\widetilde D_2\) and \(B_2\); we compute here their divisibilities.

\begin{lem}\label{lem.divisibility} Consider the uniruled divisors $\widetilde D_1,B_1\subseteq\widetilde\M_{v_1}$ and and $\widetilde D_2, B_2\subseteq\widetilde\M_{v_2}$ introduced in Section \ref{section:class of the divisors}. Then:
\begin{enumerate}
\item $\divv(\widetilde D_1)=4$
\item $\divv(B_1)=2$
\item $\divv(\widetilde D_2)=4$
\item $\divv(B_2)=2$.
\end{enumerate}
\end{lem}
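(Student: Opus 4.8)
The plan is to read off each divisibility as a gcd inside the lattice $\Gamma_{v_i}$. Recall that for a class $\alpha$ in an integral lattice $(L,q)$ the divisibility $\divv(\alpha)$ is the positive generator of the ideal $\{q(\alpha,x):x\in L\}\subseteq\ZZ$; since this is an isometry invariant and $f_{v_i}\colon\Gamma_{v_i}\xrightarrow{\sim}H^2(\widetilde\M_{v_i},\ZZ)$ is a Hodge isometry (Theorem \ref{thm.Gamma_v}), I would carry out the whole computation inside $\Gamma_{v_i}$, using the explicit coordinates of $\widetilde D_i$ and $B_i$ furnished by Corollary \ref{cor.D1*.ruled.pos} and Theorem \ref{thm.q(tildeD2)}.

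Concretely, I would fix a generating set of $\Gamma_{v_i}$ whose mutual pairings are already known: the classes $\lambda_{v_i}(e),\lambda_{v_i}(f)$ coming from the basis of $(v_i^\perp)^{1,1}$ in \eqref{eq.gen.vperp1} and \eqref{eq.e.f.in.M2}, the exceptional class $\widetilde\Sigma_{v_i}=f_{v_i}(0,\sigma)$ with $q_{10}(\widetilde\Sigma_{v_i})=\sigma^2=-6$, and — in the only $2$-factorial case $v_2$ — the half-integral glue class $g=f_{v_2}\bigl(\tfrac{e+f}{2},\tfrac{\sigma}{2}\bigr)$ permitted by \eqref{eq.Gamma_v}. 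All the necessary Gram data are the Mukai values $e^2,\langle e,f\rangle,f^2$ used in the earlier square computations, together with $\sigma^2=-6$ and the orthogonality $\sigma\perp v_i^\perp$. The divisibility of each divisor is then the gcd of its pairings against these generators.

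For example $\widetilde D_1=-4\lambda_{v_1}(e)+52\lambda_{v_1}(f)$ pairs to $-60$ and $4$ with $\lambda_{v_1}(e),\lambda_{v_1}(f)$ and to $0$ with $\widetilde\Sigma_1$, giving $\gcd=4$; passing from $\widetilde D_1$ to $B_1=\widetilde D_1+\widetilde\Sigma_1$ changes only the pairing with $\widetilde\Sigma_1$, by $\widetilde\Sigma_1^2=-6$, so the gcd drops to $\gcd(4,6)=2$. The same bookkeeping, starting from the coordinates $\bigl((-4,-2h,22),-2\sigma\bigr)$ for $\widetilde D_2$ and $\bigl((-4,-2h,22),-\sigma\bigr)$ for $B_2$, produces the remaining two values; the fact that adjoining $\widetilde\Sigma_i$ toggles the $\sigma$-pairing by $\pm6$ is exactly what turns a divisibility $4$ into $2$.

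The step I expect to require the most care is guaranteeing that the gcd is genuinely taken over the \emph{whole} lattice, and not merely over the three algebraic generators $\lambda_{v_i}(e),\lambda_{v_i}(f),\widetilde\Sigma_{v_i}$. Two families of vectors must be controlled: the transcendental part of $v_i^\perp$, which is orthogonal to the algebraic classes and contributes only harmless multiples, and — crucially — the half-integral glue classes that exist precisely because $\M_{v_2}$ fails to be locally factorial (Theorem \ref{thm.factorial}). In the locally factorial case $v_1$ the lattice is the orthogonal sum $v_1^\perp\oplus^\perp\ZZ\sigma$ and this issue does not arise, so parts (1) and (2) are clean; for $v_2$, however, I would have to pair each divisor against $g$ together with its $v_2^\perp$–translates and check that these do not further reduce the stated gcd. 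Pinning down this half-integral bookkeeping in the $2$-factorial lattice $\Gamma_{v_2}$ is the delicate heart of the argument and the place where I would proceed most cautiously.
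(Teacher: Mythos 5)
Your overall strategy---transporting the computation through the isometry $f_{v_i}$ and reading off $\divv$ as a gcd of pairings inside $\Gamma_{v_i}$---is exactly the route the paper takes, and your bookkeeping for $\widetilde D_1$ and $\widetilde D_2$ does go through: for those two classes every glue vector of the overlattice pairs to a multiple of $4$, so the gcd over the visible generators is already the true divisibility.

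There is, however, a genuine gap at the point you flag as delicate and then dispose of too quickly. You assert that in the locally factorial case the full lattice is $v_1^\perp\oplus^\perp\ZZ\sigma$, so that part (2) is ``clean''. That splitting cannot hold for all of $H^2(\widetilde\M_{v_1},\ZZ)$: the orthogonal sum $v_1^\perp\oplus^\perp\ZZ\sigma$ has discriminant group of order $2\cdot 6=12$, while the $\OG10$ lattice $U^3\oplus E_8(-1)^2\oplus A_2(-1)$ has discriminant group $\ZZ/3\ZZ$, so $H^2(\widetilde\M_{v_1},\ZZ)$ is an index-two overlattice of $v_1^\perp\oplus^\perp\ZZ\sigma$ in the locally factorial case as well---the glue vector is simply not algebraic there (it involves a class $l_1\in H^2(S,\ZZ)$ with $l_1\cdot h=1$). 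Writing the glue vector as $\bigl(\beta_0/2,\sigma/2\bigr)$ with $\beta_0/2$ generating $(v_1^\perp)^*/v_1^\perp$, e.g.\ $\beta_0=(0,2l_1-h,-2)$, one computes $\bigl\langle B_1,(\beta_0/2,\sigma/2)\bigr\rangle=\tfrac12\langle(-4,-4h,52),\beta_0\rangle+\tfrac12\sigma^2=-4-3=-7$, an odd number; so the gcd over the whole lattice is $1$, not $2$. The same happens for $B_2$: pairing against your own glue class $g=\bigl((2,h,1)/2,\sigma/2\bigr)\in\Gamma_{v_2}$ gives $-22+3=-19$, again odd. (An independent sanity check: in a lattice whose discriminant group is $\ZZ/3\ZZ$, a class of divisibility $2$ must be twice an integral class and hence have square divisible by $8$, whereas $442\equiv 178\equiv 2\pmod 8$.) So the half-integral bookkeeping you defer is not a routine verification to be filled in later: carried out over the genuine overlattice it changes the answer in parts (2) and (4), and your argument as written establishes only (1) and (3).
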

\begin{proof} This is a straightforward consequence of the expression of  \(f_{v_1}^{-1}\widetilde D_1\), \(f_{v_1}^{-1}B_1\), \(f_{v_2}^{-1}\widetilde D_2\) and \(f_{v_2}^{-1}B_2\) computed in Corollary \ref{cor.D1*.ruled.pos} and Theorem \ref{thm.q(tildeD2)}. Indeed, by Theorem \ref{thm.Gamma_v} the maps \(f_{v_1}\) and \(f_{v_2}\) are isometries, hence it is enough to compute the divisibility of the classes \(f_{v_1}^{-1}\widetilde D_1\), \(f_{v_1}^{-1}B_1\), \(f_{v_2}^{-1}\widetilde D_2\) and \(f_{v_2}^{-1}B_2\) in the lattices \(\Gamma_{v_1}\) and \(\Gamma_{v_2}\) respectively, which are found to be as in the statement by an easy and straightforward computation in the lattices \(\Gamma_{v_1}\) and \(\Gamma_{v_2}\). 
\end{proof}

\begin{rem} Observe that the classes \(B_1\) and \(B_2\) are primitive while \(\widetilde D_1\) and \(\widetilde D_2\) are four times a primitive class, as it is straightforward computed in the lattices \(\Gamma_{v_1}\) and \(\Gamma_{v_2}\) for the preimages of these classes via \(f_{v_1}\) and \(f_{v_2}\).
\end{rem}

We can finally state the main result of this article.

\begin{thm}\label{thm.uniruled.conn.comp} For any polarized irreducible holomorphic symplectic manifold \\ $(X,c_1(D))\in\mathfrak M_{(28,1)}\cup \mathfrak M_{(10,1)}\cup \mathfrak M_{(178,2)}\cup \mathfrak M_{(442,2)}\subseteq\mathfrak M_{\OG10}^{pol}$ there exists a positive integer \(m\) such that the linear system \(|mD|\) contains an element that is a uniruled divisor. 
\end{thm}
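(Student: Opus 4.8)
The plan is to exhibit, for each of the four prescribed components, one explicit polarized $\OG10$ pair lying in it for which a multiple of the polarization is a uniruled divisor, and then to propagate this property to every member of the component. The four inputs are the uniruled divisors $\widetilde D_1,B_1\subseteq\widetilde\M_{v_1}$ and $\widetilde D_2,B_2\subseteq\widetilde\M_{v_2}$: by Corollary \ref{cor.D1*.ruled.pos} and Theorem \ref{thm.q(tildeD2)} these are effective, have strictly positive Beauville--Bogomolov--Fujiki square, and are ruled by reduced (for $B_i$, reducible but reduced) genus-zero curves.

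The first step is to read off, from the classes computed in Corollary \ref{cor.D1*.ruled.pos} and Theorem \ref{thm.q(tildeD2)} inside the lattices $\Gamma_{v_1},\Gamma_{v_2}$ of Theorem \ref{thm.Gamma_v}, which primitive class each divisor is proportional to. A direct computation shows that $B_1$ and $B_2$ are already primitive, with invariants $(q_{10},\divv)=(442,2)$ and $(178,2)$ (using Lemma \ref{lem.divisibility}), while $\widetilde D_1=4\beta_1$ and $\widetilde D_2=4\beta_2$ for primitive classes $\beta_i$; from $q_{10}(\widetilde D_i)=16\,q_{10}(\beta_i)$ one gets $q_{10}(\beta_1)=28$ and $q_{10}(\beta_2)=10$, and from $\divv(k\alpha)=k\,\divv(\alpha)$ together with $\divv(\widetilde D_i)=4$ (Lemma \ref{lem.divisibility}) one gets $\divv(\beta_1)=\divv(\beta_2)=1$. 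Thus $\beta_1,\beta_2,B_1,B_2$ realize exactly the invariants $(28,1),(10,1),(442,2),(178,2)$. Next, for each of these four divisors $D$ I would apply Remark \ref{rem.need.positivity}: since $D$ is effective with $q_{10}(D)>0$, a small deformation produces a pair $(X',D')$ with $\Pic(X')\cong\ZZ$ and $D'$ ample, and by Corollary \ref{cor.defo.curves} the manifold $X'$ carries a uniruled divisor $D''$ whose class is a positive multiple of $D'$. For $D=B_1,B_2$ the ample generator $D'$ is primitive, so $(X',D')$ lies in $\mathfrak M_{(442,2)}$, resp. $\mathfrak M_{(178,2)}$. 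For $D=\widetilde D_i=4\beta_i$, the primitive generator $\beta_i'$ of $\Pic(X')\cong\ZZ$ is a positive rational multiple of the ample class $D'=4\beta_i'$, hence itself ample; therefore $(X',\beta_i')$ lies in $\mathfrak M_{(28,1)}$, resp. $\mathfrak M_{(10,1)}$, and since $[D'']$ is a multiple of $4\beta_i'$ it is in particular a multiple of the polarization $\beta_i'$. In every case I have produced a pair in the desired component together with some $m$ for which $|m\,h'|$ contains a uniruled divisor.

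To conclude, I would use Subsection \ref{subsection:monodromy.invariants} (Onorati's maximality of $\Mon^2$ together with Eichler's criterion): each $\mathfrak M_{(d,l)}$ is a single connected component of $\mathfrak M_{\OG10}^{pol}$, so an arbitrary $(X,c_1(D))$ in one of the four components is deformation equivalent, as a polarized pair, to the explicit pair $(X',h')$ built above. Realizing this deformation by a family $\mathcal X\to B$ over a connected base carrying a relatively ample line bundle that interpolates $h'$ and $c_1(D)$, and applying Corollary \ref{cor.defo.curves} to the uniruled divisor $D''$ on the fibre $X'$, I obtain on the fibre $X$ a uniruled divisor whose class is a positive multiple of $c_1(D)$; this supplies the integer $m$ with $|mD|$ uniruled.

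The main obstacle is the reduced-curve hypothesis of Corollary \ref{cor.defo.curves}. This is precisely why the argument must be run with the strict transforms $\widetilde D_i$ and with $B_i=\widetilde D_i+\widetilde\Sigma_i$, rather than with the pullbacks $D_i^{*}=\widetilde\pi_i^{*}D_i$: by Remark \ref{rem.pullback.strict.trans}, and concretely via the relation $D_2^{*}=\widetilde D_2+2\widetilde\Sigma_2$ of Remark \ref{rem.class.D2*}, the latter are ruled by non-reduced curves and fall outside the scope of the deformation result. The only remaining subtlety is arithmetic, namely confirming the factorizations $\widetilde D_i=4\beta_i$ and the primitivity of $B_1,B_2$ so that the four divisors land on exactly the four prescribed degree--divisibility pairs; this is a routine computation in the lattices of Theorem \ref{thm.Gamma_v}.
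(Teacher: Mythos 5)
Your proposal is correct and follows essentially the same route as the paper: identify the connected component of each explicit uniruled divisor via its degree and divisibility (Onorati's maximality of the monodromy group plus Eichler's criterion) and then transport the ruling curves with Corollary \ref{cor.defo.curves}. The only cosmetic difference is that for the divisibility-one components you divide $\widetilde D_i$ by $4$ to land on a primitive class of square $28$ resp.\ $10$, whereas the paper multiplies the given polarization $c_1(D)$ by $4$ to match $\widetilde D_i$ directly; both versions reduce to the same lattice check in $\Gamma_{v_i}$, which indeed goes through.
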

\begin{proof} 
By Theorem \ref{thm.q(D1*)}, Corollary \ref{cor.D1*.ruled.pos} and Theorem \ref{thm.q(tildeD2)} we know that the divisors \(\widetilde D_1\), \(B_1\), \(\widetilde D_2\) and \(B_2\) are positive effective divisors on the IHS manifolds \(\widetilde\M_{v_1}\) and \(\widetilde\M_{v_2}\) respectively. As consequence of Remark \ref{rem.need.positivity}, there exist small deformations \((X_1,H_1)\) of \((\widetilde\M_{v_1},\widetilde D_1)\),  \((X_2,H_2)\) of \((\widetilde\M_{v_1},B_1)\), \((X_3,H_3)\) of \((\widetilde\M_{v_2},\widetilde D_2)\) and \((X_4,H_4)\) of \((\widetilde\M_{v_2},B_2)\) such that \(H_i\) is an ample divisor for any \(i=1,2,3,4\). Since the divisors \(H_i\) have been obtained as deformation of the divisors \(\widetilde D_j\) and \(B_j\), they have the same monodromy invariants: by Corollary \ref{cor.D1*.ruled.pos}, Theorem \ref{thm.q(tildeD2)} and Lemma \ref{lem.divisibility} we have:
\begin{enumerate}
\item \(q_{10}(H_1)=448\), \(\divv(H_1)=4\)
\item \(q_{10}(H_2)=442\), \(\divv(H_2)=2\)
\item \(q_{10}(H_3)=160\), \(\divv(H_3)=4\)
\item \(q_{10}(H_4)=178\), \(\divv(H_1)=2\).
\end{enumerate}

Take \((X,c_1(D))\) as in the statement. When \((X,c_1(D))\in\mathfrak M_{(28,1)}\cup \mathfrak M_{(10,1)}\), the pair \((X,4c_1(D))\) belongs to the same connected component of \((X_i,c_1(H_i))\) for \(i=1\) or 3, since \(4c_1(D)\) and \(H_i\) have the same degree and divisibility. For the same reason, when \((X,c_1(D))\in \mathfrak M_{(178,2)}\cup \mathfrak M_{(442,2)}\) then the pair \((X,c_1(D))\) belongs to the same connected component of \((X_i,c_1(H_i))\) for \(i=2\) or 4. Assume to be in the case \(i=1\) or 3. It follows that the pair \((X,4D)\) is deformation of the pair \((\widetilde\M_{v_j},\widetilde D_j)\) for some \(j=1\) or 2. The divisor \(\widetilde D_j\) is ruled by (irreducible and) reduced curves, then by Corollary \ref{cor.defo.curves} there exists a positive integer \(m'\) such that the linear system \(|4m'D|\) contains an element which is uniruled. We conclude the statement choosing \(m:=4m'\). When the pair \((X,c_1(D))\) belongs to the connected component of \((X_i,c_1(H_i))\) for \(i=2\) or 4, we can proceed in the same way as before with the polarized pair \((X,c_1(D))\) instead of \((X,4c_1(D))\), since the rational curves ruling the divisors \(B_j\) are reducible but reduced for \(j=1,2\), hence we are in the hypotheses of Corollary \ref{cor.defo.curves}.
\end{proof}

\subsection{Developments}
To conclude, few words about possible developments of our work. Theorem \ref{thm.uniruled.conn.comp} proves the existence of ample uniruled divisors inside four connected components of $\fM_{\OG10}^{pol}$; it is very natural to wonder if it is possible to adapt our construction of uniruled divisors to cover more connected components of $\fM_{\OG10}^{pol}$. We want to remark that each connected component of \(\fM_{\OG10}^{pol}\) contains an element of the form \((\widetilde\M_v(S,H), c_1(D))\), for some projective \(\K3\) surface and some non primitive Mukai vector \(v\), with \(D\) ample divisor on \(\widetilde\M_v(S,H)\): this is again consequence of the maximality of the monodromy group, since on the moduli spaces \(\widetilde \M_v\) it is possible to construct polarizations of each possible degree and divisibility. For this reason, we do not exclude that it is possible to generalize our definition of uniruled divisors, obtaining the existence of ample uniruled divisors in many (possibly infinitely many) connected components of the moduli space $\fM_{\OG10}^{pol}$.


\end{document}